  \newtheoremstyle{example}
     {3pt}
     {10pt}
     {}
     {}
     {\itshape}
     {:}
     {.5em}
     {}
\newtheorem{theorem}{Theorem}[section]
\newtheorem{lemma}[theorem]{Lemma}
\newtheorem{proposition}[theorem]{Proposition}
\newtheorem{corollary}[theorem]{Corollary}
\theoremstyle{remark}
\newtheorem{remark}[theorem]{Remark}
\theoremstyle{definition}
\newtheorem{definition}[theorem]{Definition}
\theoremstyle{example}
\newtheorem{example}[theorem]{Example}
\newcommand{\norm}[1]{\left\Vert#1\right\Vert}
\newcommand{\abs}[1]{\left\vert#1\right\vert}
\newcommand{\set}[1]{\left\{#1\right\}}
\newcommand{\Real}{\mathbb R}
\newcommand{\Nat}{\mathbb N}
\newcommand{\Cp}{\mathbb C}
\newcommand{\eps}{\varepsilon}
\newcommand{\brac}[1]{\left(#1\right)}
\newcommand{\bX}{\beta X}
\newcommand{\A}{(A,X,\mu_A)}
\newcommand{\XA}{(A,X_A,\mu_A^{X_A})}
\newcommand{\Abxa}{(A,\beta X_A,\nu_A)}
\newcommand{\BY}{(B,Y,\mu_B)}
\newcommand{\AY}{\brac{A,Y,\mu_A^Y}}
\newcommand{\bXA}{\beta X_A}
\newcommand{\AK}{(A,K,\mu_A^K)}
\newcommand{\AGl}{(A , \Gl (A) , \mu_A )}
\newcommand{\AbGl}{(A, \beta \Gl (A), \mu_A)}
\newcommand{\B}{(B,K,\mu_B)}
\newcommand{\Ab}{A^{\beta}}
\newcommand{\Abb}{A^{\beta \beta}}
\newcommand{\Abx}{(\Ab , \beta X_A , \mu_{\Ab} )}
\newcommand{\AbK}{(\Ab, K , \mu_{\Ab}^K )}
\newcommand{\nhat}{\hat{\mathbb{N}}}
\newcommand{\MA}{(M(A),\bX,\mu_{M(A)})}
\newcommand{\Pm}{\mathrm{Prim}}
\newcommand{\Gl}{\mathrm{Glimm}}
\newcommand{\Cst}{C$^{\ast}$}
\newcommand{\maxt}{\otimes_{\max}}
\newcommand\restr[2]{{
  \left.\kern-\nulldelimiterspace 
  #1 
  \right|_{#2} 
  }}
\newcommand{\gb}{\Gamma^b}
\newcommand{\go}{\Gamma_0}
\author{David McConnell\\
\emph{School of Mathematics and Statistics, University of Glasgow }\\
\texttt{david.mcconnell@glasgow.ac.uk}}
\title{A Stone-\v{C}ech Theorem for $C_0(X)$-algebras}
\date{}
\begin{document}
\maketitle
\abstract{For a $C_0(X)$-algebra $A$, we study $C(K)$-algebras $B$ that we regard as compactifications of $A$, generalising the notion of (the algebra of continuous functions on) a compactification of a completely regular space.  We show that $A$ admits a Stone-\v{C}ech-type compactification $\Ab$, a $C(\bX)$-algebra with the property that every bounded continuous section of the C$^{\ast}$-bundle associated with $A$ has a unique extension to a continuous section of the bundle associated with $\Ab$. Moreover, $\Ab$ satisfies a maximality property amongst compactifications of $A$ (with respect to appropriately chosen morphisms) analogous to that of $\bX$.  We investigate the structure of the space of points of $\bX$ for which the fibre algebras of $\Ab$ are non-zero, and partially characterise those $C_0(X)$-algebras $A$ for which this space is precisely $\bX$.
\section*{Introduction}

Bundles, or (semi-) continuous fields arise as a natural way to study the structure of non-simple \Cst -algebras. Indeed, the Gelfand-Naimark representation of a commutative \Cst -algebra may be viewed in this way, and thus suggests the approach of representing a general (i.e. non-commutative) \Cst -algebra as the algebra of continuous sections of a bundle of \Cst -algebras, over a suitably constructed base space.  To this end, Kasparov~\cite{kasparov} introduced the notion of a $C_0 (X)$-algebra, which represents a C$^{\ast}$-algebra $A$ as the section algebra of a bundle over a locally compact Hausdorff space $X$.  This generalised earlier work of Fell~\cite{fell}, Dixmier and Douady~\cite{dixmier_douady}, Dauns and Hofmann~\cite{dauns_hofmann}, Lee~\cite{lee}, and others. Working in the framework of $C_0(X)$-algebras, it is often possible to adapt and generalise the techniques and results from the commutative setting to study more general \Cst -algebras.  Recently $C_0 (X)$-algebras have proved to be extremely useful in advancing the study of the structure and classification of non-simple C$^{\ast}$-algebras, for example in~\cite{blanch_subtrivial},~\cite{kirch_phil},~\cite{blanch_kirch},~\cite{dad_win},~\cite{dad_fd} and~\cite{dad_pen}.
 
 Typically when working with bundles arising from non-unital \Cst -algebras, the natural choice of base space $X$ is non-compact and the original algebra is identified with the algebra of continuous sections of the bundle that vanish at infinity over $X$.  Recent results of Archbold and Somerset~\cite{arch_som_ideals},~\cite{arch_som_inner} motivate the study of the larger algebra of all bounded continuous sections of the bundle, with a view to understanding the structure of multiplier and corona algebras of non-simple \Cst - algebras.  Indeed, examples of this algebra have also arisen in the study of extensions~\cite{ppv1},~\cite{ppv2} and tensor products~\cite{williams_tensor}.  The corresponding object in the commutative case, namely the algebra $C^b (X)$, is often studied by embedding $X$ in a larger, compact space.

A compactification of a space $X$ is a compact Hausdorff space $K$ together with a homeomorphic embedding of $X$ as a dense subspace of $K$.  If $X$ is in addition locally compact, then Gelfand duality gives an equivalent formulation of this notion in the language of commutative \Cst -algebras: a compactification of $X$ is equivalent to a unitisation of the \Cst -algebra $C_0(X)$.  Of particular interest are the minimal (one-point) and maximal (Stone-\v{C}ech) compactifications of $X$, corresponding to the minimal unitisation and the multiplier algebra of $C_0(X)$ respectively.  Our goal here is to study compactifications in the framework of \Cst -bundles.  More precisely, given a bundle over a non-compact space $X$, when and how can it be extended to a bundle over a compactification of $X$ (leaving the fibres over points of $X$ unchanged)?  A similar question for locally trivial bundles with finite-dimensional fibres (i.e. those arising from homogeneous \Cst -algebras) was studied by Phillips in~\cite{phillips_recursive}. Here we consider this problem in the  more general setting of bundles arising from $C_0(X)$-algebras.

Of particular interest is the question of whether or not such a bundle over $X$ extends to a bundle over its Stone-\v{C}ech compactification $\bX$ in such a way that the natural \Cst -bundle analogue of the Stone-\v{C}ech extension property holds: every bounded continuous section over $X$ has a unique extension to a continuous section over $\bX$.  One motivation for studying this question is the desire to obtain a more detailed decomposition of the \Cst -algebra of bounded continuous sections of our original bundle, in line with the classical identification of $C^b(X)$ with $C( \bX)$.

For a locally compact Hausdorff space $X$, a $C_0(X)$-algebra is a \Cst -algebra $A$ which carries the structure of a non-degenerate Banach $C_0(X)$-module.  The maximal ideals of $C_0(X)$ give rise to quotient \Cst -algebras $\{ A_x : x \in X \}$ of $A$, which we regard as the fibres of a bundle of \Cst -algebras over $X$.  Each $a \in A$ then gives rise to a cross-section $X \to \coprod A_x, x \mapsto a(x)$, and there is a natural topology on $\coprod A_x$ such that $A$ is isomorphic to the \Cst -algebra of all continuous sections of this bundle that vanish at infinity on $X$.  The norm functions $x \mapsto \norm{a(x)}$ ($a \in A$) are in general only upper-semicontinuous on $X$; when they are in addition continuous for all $a \in A$ we speak of a \emph{continuous $C_0(X)$-algebra}.  Continuous $C_0(X)$-algebras are equivalent to the continuous fields of \Cst -algebras studied by Fell~\cite{fell}, Dixmier and Douady~\cite{dixmier_douady}, and many others.

When $A$ is a $C_0(X)$-algebra with all fibres nonzero, we define a \emph{compactification} of $A$ to be a $C(K)$-algebra $B$ where $K$ is a compactification of $X$ in the usual sense, $B$ contains $A$ as an essential ideal, and the fibre algebras $B_x$ of $B$ are naturally isomorphic to those of $A$ over points of $X$.  This is equivalent to requiring that the \Cst -bundle over $K$ defined by $B$, when restricted to the dense subspace $X$, coincides with the bundle over $X$ defined by $A$.  

In seeking compactifications of a $C_0(X)$-algebra $A$, it would seem natural at first to take $B=M(A)$ (the multiplier algebra of $A$) and $K= \bX$. While $M(A)$ is indeed a $C( \bX)$-algebra, it often fails to be a compactification of $A$, since the fibre algebras $M(A)_x$ can be much larger than those of $A$.  For example, this occurs whenever the $A_x$ are non-unital, since quotients of $M(A)$ are necessarily unital. We remark that it was shown by Archbold and Somerset that the identity $M(A)_x = M(A_x)$ can also fail in general~\cite{arch_som_ss}, and that when $A$ is a continuous $C_0(X)$-algebra it is often the case that $M(A)$ fails to be continuous~\cite{arch_som_mult}.

We study the \Cst -algebra $\Ab$ of bounded continuous sections associated with a $C_0(X)$-algebra $A$, which in general, lies between $A$ and its multiplier algebra $M(A)$.  In Section~\ref{s:cfn} we show that for any compactification $K$ of $X$, $\Ab$ carries the structure of a $C(K)$-algebra and moreover defines a compactification of $A$ (Theorem~\ref{t:ab}).  We show in the same Theorem that if $A$ is a continuous $C_0(X)$-algebra and $K = \bX$, then $\Ab$ is a continuous $C( \bX )$-algebra.  Moreover, Corollary~\ref{c:cts} shows that $\bX$ is essentially the unique compactification of $X$ with this property.   In general, the operation of constructing $\Ab$ from $A$ is functorial with respect to the natural morphisms between $C_0(X)$-algebras $A$ and $C_0(Y)$-algebras $B$ (Corollary~\ref{c:functorial}), and is a closure operation (Corollary~\ref{c:closure}).  These properties generalise the Stone extension of a continuous map $X \to Y$ to a continuous map $\bX \to \beta Y$, and the fact that $\beta ( \bX) = \bX$ respectively.

In the special case of a trivial $C_0 (X)$-algebra $A = C_0 (X, B)$ for some \Cst -algebra $B$, we have $\Ab = C^b (X,B)$ (Corollary~\ref{c:triv}).  In particular this shows that if $A = C_0(X)$ then $\Ab = C^b( X)$.  More generally, we show  that the $C( \bX)$-algebra $\Ab$, has the property that every bounded continuous section on $X$ of the bundle defined by $A$ extends uniquely to a continuous section on $\bX$ of $\Ab$ (Theorem~\ref{t:ext}). This shows that the bundle arising from the $C(\bX)$-algebra $\Ab$ has an extension property analogous to that of the classical Stone-\v{C}ech compactification. 

The Corona algebra of a $C_0(X)$-algebra is well-known to exhibit interesting and pathological behaviour~\cite{kucerovsky_ng},~\cite{arch_som_ideals},~\cite{arch_som_inner}. With this in mind, we investigate the nature of the fibres of the $C( \bX)$-algebra $\Ab$ at points of the `corona' set $\bX \backslash X$, and in particular, the question of whether or not these fibres are all non-zero \Cst -algebras.  We show in Theorem~\ref{t:nonzero} that $\Ab$ has no zero-algebra fibres whenever $A$ belongs to a large class of $C_0(X)$-algebras, including all $\sigma$-unital, continuous $C_0(X)$-algebras. Interestingly, Example~\ref{e:remote1} shows that this can fail when $A$ is not continuous. Nonetheless, we show in Theorem~\ref{t:notremote} that the set of nonzero fibres of $\Ab$ in $\bX \backslash X$ is at least dense in $\bX \backslash X$ for all $\sigma$-unital $A$. The existence  of zero-algebra fibres of $\Ab$ is closely related to the existence of so-called \emph{remote points} of Stone-\v{C}ech remainders, a topic of significant interest in the field of general topology since the work of Fine and Gilmann in~\cite{fine_gilmann}.

It is well-known that $\bX$ is maximal amongst compactifications of $X$: given any compactification $K$ of $X$, there is a unital, injective $\ast$-homomorphism $C(K) \to C( \bX)$ (commuting with the restriction homomorphisms $f \mapsto \restr{f}{X}$). We show that the compactification $\Ab$ of the $C_0(X)$-algebra $A$ has an analogous maximality and uniqueness property (Theorems~\ref{t:u1} and~\ref{t:unique}).  

The structure of the paper is as follows: in Section~\ref{s:c0x} we introduce the basic definitions and properties of $C_0(X)$-algebras and illustrate how a $C_0(X)$-algebra $A$ defines a \Cst -bundle over $X$.  Since a general $C_0(X)$-algebra $A$ may have some fibres equal to the zero \Cst -algebra, it is often necessary to restrict our attention to the subspace $X_A$ corresponding to the nonzero fibres.  Often, $X$ may be essentially replaced with $X_A$ as a base space, though in general, care is needed since $X_A$ may fail to be locally compact (it is always completely regular and so we may safely speak of $\bXA$).  The notion of a morphism between a $C_0(X)$-algebra $A$ and a $C_0(Y)$-algebra $B$ is made clear in Section~\ref{s:cfn}, where we introduce precise definition of a compactification of a $C_0(X)$-algebra $A$ (allowing for $X_A \subsetneq X$) and its equivalent formulation in the language of \Cst -bundles.

The main results concerning the Stone-\v{C}ech compactification of a $C_0(X)$-algebra appear in Sections~\ref{s:cfn} and~\ref{s:ext}.  In order to account for the possibility of having some fibre algebras equal to zero, the statements of these results are first presented in the framework of more general compactifications $K$ of the space $X_A$, before restricting to the case $K = \bX$.  Section~\ref{s:fibres} concerns the study of the fibre algebras of $\Ab$ of points of the `corona' sets $K \backslash X_A$.  Finally, the maximality and uniqueness results appear in Section~\ref{s:univ}.

\section{Preliminaries on $C_0 (X)$-algebras}
\label{s:c0x}
\begin{definition}
\label{d:c0xalgebra}
Let $X$ be a locally compact Hausdorff space.  A \emph{$C_0(X)$-algebra} is a C$^{\ast}$-algebra $A$ together with a $\ast$-homomorphism $\mu_A : C_0 (X) \rightarrow ZM(A)$ with the property that $\mu_A (C_0 (X) )A = A$. 
\end{definition}

When the space $X$ is compact, the non-degeneracy condition $\mu_A (C_0 (X) )A = A$ above is equivalent to the $\ast$-homomorphism $\mu_A$ being unital.  In this case we will say that $\A$ is a $C(X)$-algebra.

It follows from the  Dauns-Hofmann Theorem~\cite{dauns_hofmann} that there is a $\ast$-isomorphism $\theta_A : C^b ( \Pm (A) ) \rightarrow ZM(A)$ with the property that
\begin{equation}
\label{e:dh}
\theta_A (f) a + P = f(P) (a + P ), \mbox{ for } a \in A , f \in C^b ( \Pm (A)), P \in \Pm (A).
\end{equation}
This gives an equivalent formulation of Definition~\ref{d:c0xalgebra}: a $C_0 (X)$-algebra is a C$^{\ast}$-algebra $A$ together with a continuous map $\phi_A : \Pm (A) \rightarrow X$.  The maps $\mu_A$ and $\phi_A$ are related via $ \mu_A (f) = \theta_A ( f \circ \phi_A )$ for all $f \in C_0 (X)$~\cite[Proposition C.5]{williams}.  We call $\phi_A$ the \emph{base map} and $\mu_A$ the \emph{structure map}.

For clarity we will denote any $C_0 (X)$-algebra $A$ by the triple $(A , X , \mu_A )$.  For  $x \in X$ we define the ideal $J_x$ via
\begin{equation}
\label{e:ix}
J_x = \mu_A \left( \{ f \in C_0 (X) : f(x) = 0 \} \right) A = \bigcap \{ P \in \Pm (A) : \phi_A (P) = x \},
\end{equation}
see~\cite[Section 2]{nilsen_bundles} for example.

We do not require that the base map $\phi_A : \Pm (A) \rightarrow X$ be surjective, or even that $\phi_A ( \Pm (A) )$ be dense in $X$.  It is shown in~\cite[Corollary 1.3]{arch_som_mult} that $\phi_A$ has dense range if and only if the structure map $\mu_A$ is injective.

If $x \in X \backslash \mathrm{Im} ( \phi_A )$, then we may still define the ideal $J_x $ of $A$ via \\$J_x=\mu_A \left( \{ f \in C_0 (X) : f(x) = 0 \} \right) A$; it is shown in~\cite[\S 1]{arch_som_mult} that $J_x = A$ for all such $x$.  This is consistent with our second definition of $J_x$ in~(\ref{e:ix}), when we regard the intersection of the empty set $ \{ P \in \Pm (A) : \phi_A (P) = x \}$ of ideals of $A$ as $A$ itself.

\begin{definition}
Let $\A$ be a $C_0(X)$-algebra. Define the subset $X_A$ of $X$ to be
\[
X_A = \mathrm{Im} ( \phi_A ) = \{ x \in X : J_x \neq A \}.
\]
\end{definition}

For each $x \in X$ let $A_x = A/ J_x$ be the quotient \Cst -algebra, and for $a \in A$ let $a(x) = a + J_x \in A_x$ be the image of $a$ under the quotient $\ast$-homomorphism.  Then the following properties of $\A$ are well-known, see~\cite[Proposition C.10]{williams} or~\cite{nilsen_bundles} for example:
\begin{enumerate}
\item[(i)] $\norm{a} = \sup_{x \in X} \norm{a(x)}$ for all $a \in A$, 
\item[(ii)] for all $a \in A, f \in C_0(X)$ and $x \in X$ we have
\[
(\mu_A(f)a)(x)=f(x)a(x)
\]
\item[(iii)] the function $x \mapsto \norm{a(x)}$ is upper-semicontinuous and vanishes at infinity on $X$.
\end{enumerate}
As a consequence, we may regard $A$ as a \Cst -algebra of cross-sections $X \rightarrow \coprod_{x \in X} A_x$, identifying $a \in A$ with $x \mapsto a(x)$.  Note that under this identification, property (ii) shows that the $\ast$-homomorphism $\mu_A : C_0 (X) \rightarrow ZM(A)$ is given by pointwise multiplication of sections by functions in $C_0(X)$.  Note that for $x \in X \backslash X_A$, the fibre algebras $A_x$ are zero, and we have
\begin{equation}
\label{e:xa}
X_A = \set{ x \in X : A_x \neq 0}
\end{equation}

 In fact, there is a unique topology on $\coprod_{x \in X} A_x$ which defines an upper-semicontinuous \Cst -bundle in such a way that $A$ is canonically isomorphic to the \Cst -algebra of \emph{all} continuous cross-sections of this bundle that vanish at infinity on $X$~\cite[Theorem C.26]{williams}.  While we will not explicitly define the \Cst -bundle topology on $\coprod_{x \in X} A_x$,  it will be useful to identify those cross-sections $X \rightarrow \coprod_{x \in} A_x$ that are continuous with respect to this topology.  Definition~\ref{d:cts} of a continuous cross-section $X \rightarrow \coprod_{x \in X} A_x$ is easily seen to be equivalent to the one used in~\cite[Theorem C.25]{williams} and~\cite[Definition 5.1]{ara_mat_sheaves}, and generalises the definitions used by Fell~\cite{fell} and Dixmier~\cite[Chapter 10]{dix} in the context of continuous fields.  

\begin{definition}
\label{d:cts}
Let $\A$ be a $C_0(X)$-algebra and let $Y \subseteq X$.  We say that a cross-section $b : Y \rightarrow \coprod_{x \in Y} A_x$ is \emph{continuous} (with respect to $\A$) if for all $x \in Y$ and $\eps >0$ there is a neighbourhood $U$ of $x$ in $Y$ and an element $a \in A$ such that
\[
\norm{b(y)-a(y)} < \eps \mbox{ for all } y \in U.
\]
Further, we define $\Gamma (\A)$ to be the collection of all sections $b: X_A \to \coprod_{x \in X_A} A_x$ that are continuous with respect to $\A$. Then $\Gamma (\A)$ is a $\ast$-algebra with respect to pointwise operations.  The $\ast$-subalgebra $\gb (\A)$ (respectively $\go (\A)$) consisting of those continuous sections $b$ for which the norm-function $x \mapsto \norm{ b(x) }$ is bounded (respectively vanishes at infinity on $X_A$), equipped with the supremum norm is a \Cst -algebra.
\end{definition}
When there is no ambiguity regarding the $C_0(X)$-algebra structure on a given \Cst -algebra $A$, we shall write $\Gamma (A), \gb (A)$ and $\go (A)$ respectively to denote the section algebras of Definition~\ref{d:cts}.  It is easily seen that for $b \in \Gamma (A)$, the norm function $x \mapsto \norm{b(x)}$ is upper-semicontinuous on $X_A$. Moreover, when $X_A$ is compact then it is clear that $\Gamma_0 (A) = \gb (A) = \Gamma (A)$.  

\begin{remark}
Usually when considering a $C_0(X)$-algebra $\A$ as an algebra of cross-sections, elements of $\go (A)$ are regarded as continuous cross-sections $X \to \coprod_{x \in X} A_x$ (rather than being defined on $X_A$ as in Definition~\ref{d:cts}).  Note that since $A_x = \set{0}$ for $x \in X \backslash X_A$ and $x \mapsto \norm{a(x)}$ is upper-semicontinuous on $X$, it is clear that a cross-section $a:X \to \coprod_{x \in X} A_x$ is continuous with respect to $\A$ on $X$ if and only if it is continuous on $X_A$. Hence the definition of $\go (A)$ in Definition~\ref{d:cts} is equivalent to the one used in~\cite[Appendix C]{williams}.

The definition of $\gb (A)$, however, does depend on whether sections are defined on $X$ or on $X_A$ (see Example~\ref{e:notsurjective}). Hence using $X_A$ rather than $X$ allows us flexibility in changing the ambient space of $X_A$ without affecting $\gb (A)$.
\end{remark}

The following Theorem recalls some known results about $\go (A):$
\begin{theorem}
\label{t:go}
Let $\A$ be a $C_0(X)$-algebra.  Then the section algebra $\go (A)$ has the following properties:
\begin{enumerate}
\item[(i)] The natural action of functions in $C_0(X)$ on sections in $\go (A)$ by pointwise multiplication, equips $\go (A)$ with the structure of a $C_0(X)$-algebra,  with fibres given by $\go (A)_x \cong A_x$~\cite[Proposition C.23]{williams}.
\item[(ii)] Identifying each $a \in A$ with the cross section $x \mapsto a(x)$ defines a $C_0(X)$-linear $\ast$-isomorphism $A \rightarrow \go (A)$~(\cite{nilsen_bundles},~\cite[Theorem C.26]{williams}).
\item[(iii)] Let $B$ be a \Cst -subalgebra of $\go (A)$ such that
\begin{enumerate}
\item[(a)]  for all $f \in C_0(X)$ and $b \in B$ the section $f \cdot b$, where $(f \cdot b)(x) = f(x) b(x)$ for all $x \in X$, belongs to $B$, and
\item[(b)] for each $x \in X$ and $c \in A_x$ there is some $b \in B$ with $b(x)=c$,
\end{enumerate}
then $B=\go (A)$~\cite[Proposition C.24]{williams}.
\end{enumerate}
\end{theorem}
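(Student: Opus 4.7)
The plan is to establish (i), then (iii), then (ii), using (iii) to obtain the surjectivity half of (ii) by applying it to the canonical image of $A$ in $\go (A)$. For (i), the first task is to check that $\go (A)$ is closed under pointwise multiplication by $f \in C_0 (X)$. Given $b \in \go (A)$, $x \in X_A$ and $\eps > 0$, continuity of $b$ supplies $a \in A$ and a neighbourhood $U$ of $x$ with $\norm{b(y) - a(y)} < \eps / (\norm{f}_{\infty} + 1)$ on $U$; since $\mu_A (f) a \in A$ represents the section $y \mapsto f(y) a(y)$, we get $\norm{(fb)(y) - (\mu_A (f) a)(y)} < \eps$ on $U$, while the bound $\norm{(fb)(x)} \leq \norm{f}_{\infty} \norm{b(x)}$ gives vanishing at infinity. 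Non-degeneracy $C_0(X) \cdot \go (A) = \go (A)$ follows by truncating $b$ with a function $f \in C_c (X)$ equal to $1$ on the compact set $\set{x : \norm{b(x)} \geq \eps}$; since pointwise multiplication by $C_0(X)$ yields central multipliers, this equips $\go (A)$ with a $C_0(X)$-algebra structure. For the fibre identification, the evaluation $\mathrm{ev}_x : \go (A) \to A_x$, $b \mapsto b(x)$, is surjective because $A_x = A/J_x$ already realises any $c \in A_x$ as $a(x)$ for some $a \in A$, whose section lies in $\go (A)$. The kernel $\set{b : b(x) = 0}$ agrees with $\mu_{\go (A)} \brac{ \set{g \in C_0 (X): g(x)=0}} \go (A)$: for such $b$ and $\eps > 0$, upper-semicontinuity yields a neighbourhood $U$ of $x$ with $\norm{b(y)} < \eps$ on $U$, and any $g \in C_0 (X)$ with $g(x) = 0$, $0 \leq g \leq 1$, and $g \equiv 1$ off $U$ satisfies $\norm{b - g \cdot b} < \eps$.

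For (iii), the main tool is a partition-of-unity approximation. Let $b \in \go (A)$ and $\eps > 0$. Upper-semicontinuity and vanishing at infinity of $\norm{b(\cdot)}$ imply that $K = \set{x \in X : \norm{b(x)} \geq \eps}$ is compact. For each $x \in K$, hypothesis (b) supplies $b_x \in B$ with $b_x (x) = b(x)$, and continuity of $b - b_x$ at $x$ yields a neighbourhood $U_x$ of $x$ with $\norm{b(y) - b_x (y)} < \eps$ on $U_x$. Extract a finite subcover $U_{x_1}, \ldots, U_{x_n}$ of $K$ and choose $f_1, \ldots, f_n \in C_c (X)$ subordinate to this cover with $0 \leq f_i \leq 1$ and $\sum_i f_i \equiv 1$ on $K$. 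By (a), the element $c = \sum_i f_i \cdot b_{x_i}$ lies in $B$; on $K$ we have $\norm{c(y) - b(y)} = \norm{ \sum_i f_i (y) (b_{x_i}(y) - b(y))} < \eps$, while off $K$ the pointwise bound $\norm{b_{x_i} (y)} \leq \norm{b(y)} + \eps < 2 \eps$ on each $U_{x_i}$ gives $\norm{c(y)} \leq 2 \eps \sum_i f_i (y) \leq 2\eps$ and hence $\norm{c(y) - b(y)} < 3 \eps$. Since $B$ is norm-closed, $b \in B$.

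For (ii), the map $\iota : A \to \go (A)$, $a \mapsto (x \mapsto a(x))$, is a $\ast$-homomorphism by the pointwise operations on sections, and the standard properties of $\A$ recalled just before Definition~\ref{d:cts} show that $\iota$ is $C_0 (X)$-linear and isometric. Hence $\iota (A)$ is a \Cst -subalgebra of $\go (A)$ satisfying (a) (by $C_0(X)$-linearity of $\iota$) and (b) (by definition of $A_x$), so (iii) gives $\iota (A) = \go (A)$ and $\iota$ is the desired $C_0(X)$-linear $\ast$-isomorphism. The principal obstacle lies in (iii): off the compact set $K$ one no longer has the normalisation $\sum_i f_i (y) = 1$, and the uniform control of $c$ there requires the local bound $\norm{b_{x_i} (y)} \leq \norm{b(y)} + \eps$ coming from the choice of neighbourhoods $U_{x_i}$.
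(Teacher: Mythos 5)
The paper offers no proof of Theorem~\ref{t:go}: it is stated as a collection of known facts with citations to \cite{williams} (Propositions C.23, C.24 and Theorem C.26) and \cite{nilsen_bundles}, so there is no in-paper argument to compare against. Your proof is, in substance, the standard one from those references: the partition-of-unity approximation for (iii), and the derivation of (ii) by applying (iii) to the isometric, $C_0(X)$-linear image of $A$ in $\go(A)$, is exactly how the material is organised in \cite{williams}. Parts (ii) and (iii) are correct as written; in particular the point you flag as the principal obstacle --- controlling $c=\sum_i f_i \cdot b_{x_i}$ off the compact set $K$, where $\sum_i f_i$ need not equal $1$ --- is handled correctly by the local bound $\norm{b_{x_i}(y)} < 2\eps$ on $U_{x_i}\setminus K$ together with $\sum_i f_i \leq 1$ everywhere (which you should state explicitly when choosing the partition of unity, though it is standard).

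One step in (i) fails as written. In identifying $\ker (\mathrm{ev}_x)$ with the ideal $\mu_{\go(A)}\brac{\set{g \in C_0(X) : g(x)=0}}\go(A)$, you ask for $g \in C_0(X)$ with $g(x)=0$, $0 \leq g \leq 1$ and $g \equiv 1$ off $U$. No such $g$ exists when $X \setminus U$ is non-compact: a function vanishing at infinity cannot be identically $1$ on a set that is not relatively compact. The repair uses the same compactness you exploit elsewhere: since $b(x)=0$, the compact set $K_{\eps} = \set{y : \norm{b(y)} \geq \eps}$ does not contain $x$, so Urysohn's lemma gives $g \in C_c(X)$ with $0 \leq g \leq 1$, $g \equiv 1$ on $K_{\eps}$ and $g(x)=0$; then $\norm{b - g\cdot b} \leq \sup_y \abs{1-g(y)}\,\norm{b(y)} \leq \eps$, because the quantity vanishes on $K_{\eps}$ and is less than $\eps$ off it. The same argument shows $\go(A)_x = 0 = A_x$ for $x \in X \setminus X_A$, a case your evaluation-map discussion passes over silently. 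With that correction the proof is complete.
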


Let $\A$ be a $C_0(X)$-algebra with base map $\phi_A : \Pm (A) \to X$, and let $Y$ be any locally compact Hausdorff space containing (a homeomorphic image of) $X_A$.  Then we may regard $\phi_A$ as a map from $\Pm (A)$ to $Y$. Hence $A$ defines a $C_0(Y)$-algebra with this base map.

Moreover, as a subspace of the locally compact Hausdorff space $X$, $X_A$ is completely regular.  In particular, $X_A$ admits a homeomorphic embedding into its Stone-\v{C}ech compactification $\beta X_A$.  Thus the above remarks apply with $Y= \beta X_A$.  It shall be convenient to fix some notation to describe this process.

\begin{definition}
\label{d:c0y}
Let $\A$ be a $C_0(X)$-algebra and let $Y$ be a locally compact Hausdorff space containing a homeomorphic image of $X_A$.  The $C_0(Y)$-algebra constructed by regarding $\phi_A$ as a map $\Pm (A) \to Y$ shall be denoted by $ \AY $. In the particular case where $Y= \beta X_A$, we shall set
\[
\Abxa : = \brac{ A , \beta X_A , \mu_A^{\bXA} }.
\]  
\end{definition}
Note that for $x \in X_A$, the ideals $J_x$ of~\eqref{e:ix} are independent of the ambient space of $X_A$. In particular, for each $a \in A$ the cross-section $a : X_A \to \coprod_{x \in X_A} A_x$, $x \mapsto a(x)$ is unchanged by the process of varying the base space.  In particular, the definitions of $\go (A)$ and $\gb (A)$ are unambiguous.

When $X_A$ is locally compact, we may take the base space $Y$ of Definition~\ref{d:c0y} to be $X_A$. Hence $\XA$ is a $C_0(X_A)$-algebra, with (nonzero) fibres $A_x$ for all $x \in X_A$.  Note that in this case $X_A$ is open in $\bXA$, and so $C_0(X_A)$ is the ideal $\set{ f \in C( \bXA) : \restr{f}{\bXA \backslash X_A} \equiv 0 }$.  It follows that $\mu_A^{X_A} = \restr{\nu_A}{C_0 (X_A)}$.

\begin{definition}
A $C_0(X)$-algebra $\A$ is said to be \emph{continuous} if the functions $x \mapsto \norm{a(x)}$ are continuous on $X$ for all $ a \in A$.
\end{definition}

\begin{remark}
If $\A$ is a continuous $C_0(X)$-algebra and $b \in \gb (A)$, the norm function $x \mapsto \norm{b(x)}$ is continuous on $X_A$.
\end{remark}

By Lee's Theorem~\cite[Theorem 5]{lee}, $\A$ is a continuous $C_0(X)$-algebra if and only if the base map $\phi_A$ is an open map.  In this case the local compactness of $\Pm(A)$~\cite[Corollary 3.3.8]{dix} passes to $X_A$.  It is evident that $\A$ is a continuous $C_0(X)$-algebra if and only if $\XA$ is a continuous $C_0(X_A)$-algebra.

Since we wish to identify $A$ with both $\go (\XA)$ and $\go (\A)$, it is natural to ask that $\gb (\XA)$ and $\gb (\A)$ be isomorphic also.  The following example shows how this could fail if we were to allow elements of $\gb (\A)$ to have domain $X$ rather than $X_A$.
\begin{example}
\label{e:notsurjective}
Let $A=C_0((0,1))$, $X=[0,1]$ and $\mu_A : C([0,1]) \rightarrow M (C_0(0,1))$ be given by pointwise multiplication, so that $\A$ is a $C([0,1])$-algebra.  Then $X_A=(0,1)$ and the fibres of $\A$ are given by $A_t= \mathbb{C}$ for $t \in X_A$ and $A_0=A_1= \{ 0 \}$.

Suppose that $b:X \to \coprod_{t \in X} A_t$ is continuous with respect to $\A$.  Then since $A_0=A_1 = \set{0}$, $b$ necessarily satisfies $b(0)=b(1)=0$. By continuity, it follows that for any $\eps >0$ and $t \in [0,1]$ there is $a \in A$ and $\delta>0$ with
\[
\norm{b(s)-a(s)} < \eps \mbox{ for all } s \in \brac{t-\delta,t+\delta.}
\]
Hence $b$ is continuous and vanishes at infinity on $(0,1)$, and so in fact $\gb (A) \cong C_0(0,1)$.

On the other hand, if we replace $X$ with $X_A$, we see that $\gb (A)$ consists of cross-sections $c: (0,1) \rightarrow \coprod_{t \in (0,1)} A_t$.  It is easily seen that in this case $\gb (A)$ is naturally isomorphic to $C^b ((0,1))$.

\end{example}

We now describe how a $C_0(X)$-algebra $A$ gives rise to a $C( \bX)$-algebra $M(A)$.

\begin{proposition}{\cite[Proposition 1.2]{arch_som_mult}}
\label{p:ma}
Let $\A$ be a $C_0 (X)$-algebra with base map $\phi_A : \Pm (A) \rightarrow X$.  Then the structure map $\mu_A$ extends to a unital $\ast$-homomorphism $\mu_{M(A)} : C ( \beta X) \rightarrow ZM(A)$, hence $\MA$ is a $C(\bX)$-algebra. 
\end{proposition}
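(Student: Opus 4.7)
The plan is to exploit the Dauns-Hofmann identification $\theta_A : C^b(\Pm (A)) \to ZM(A)$ together with the continuity of the base map $\phi_A : \Pm (A) \to X$, which carries over automatically to $\phi_A$ regarded as a continuous map into the compactification $\bX$.

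Since $\phi_A : \Pm (A) \to \bX$ is continuous, composition with $\phi_A$ yields a unital $\ast$-homomorphism $C(\bX) \to C^b(\Pm(A))$, $f \mapsto f \circ \phi_A$ (boundedness being automatic since $\norm{f \circ \phi_A}_\infty \leq \norm{f}_\infty$). Composing with $\theta_A$ and the canonical identification $C(\bX) \cong C^b(X)$, I would define
\[
\mu_{M(A)} : C(\bX) \to ZM(A), \qquad \mu_{M(A)}(f) := \theta_A(f \circ \phi_A).
\]
This is a unital $\ast$-homomorphism by construction, and the identity $\mu_A(f) = \theta_A(f \circ \phi_A)$ for $f \in C_0(X)$ displayed in~(\ref{e:dh}) shows that $\mu_{M(A)}$ agrees with $\mu_A$ on $C_0(X)$, so it is indeed an extension.

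It then remains to verify that $\MA$ is a $C(\bX)$-algebra. Since $\bX$ is compact, the non-degeneracy condition $\mu_{M(A)}(C(\bX)) M(A) = M(A)$ reduces to $\mu_{M(A)}$ being unital, which has already been arranged. Centrality of the image in $M(M(A))$ follows from the identity $M(M(A)) = M(A)$, whence $ZM(M(A)) = Z(M(A)) = ZM(A)$. The only substantive point in the entire argument is the elementary observation that continuity of $\phi_A$ makes $f \circ \phi_A$ well-defined in $C^b(\Pm(A))$ for every $f \in C^b(X)$; I do not anticipate any serious obstacle. The Dauns-Hofmann picture is the convenient framework here, since a direct approach --- extending the non-degenerate $\ast$-homomorphism $\mu_A : C_0(X) \to M(A)$ to $M(C_0(X)) = C^b(X) = C(\bX)$ by the standard extension theorem for non-degenerate $\ast$-homomorphisms --- would require a separate verification that the extension remains central, effectively duplicating work that Dauns-Hofmann handles for free.
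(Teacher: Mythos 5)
Your argument is correct: the paper offers no proof of this proposition (it is quoted from Archbold--Somerset \cite[Proposition 1.2]{arch_som_mult}), and your route via the Dauns--Hofmann isomorphism $\theta_A$ and the composition $f \mapsto \theta_A\brac{\restr{f}{X} \circ \phi_A}$ is exactly the standard argument used there, with the unitality and $ZM(M(A))=ZM(A)$ points handled as needed. Nothing is missing.
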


\begin{remark}
When $X$ is compact, note that $\mu_{M(A)} = \mu_A$.  In particular this applies to the $C( \bXA)$-algebra $\Abxa$ of Definition~\ref{d:c0y}.
\end{remark}

It shall be convenient to fix some notation associated with $\A$ and $\MA$.

\begin{itemize}
\item By analogy with the ideals $J_x$ of $A$ defined in~(\ref{e:ix}), we define for $y \in \beta X$ the ideals $H_y$ of $M(A)$ via
\begin{eqnarray}
\label{e:hx} H_y & = & \mu_{M(A)} \left( \{ f \in C( \beta X : f(y) = 0 \} \right) M(A).
\end{eqnarray}

\item For $x \in X_A$, let $\pi_x :A \to A_x$ be the quotient $\ast$-homomorphism.  Then $\pi_x$ extends to a strictly continuous $\ast$-homomorphism $\tilde{\pi}_x : M(A) \to M(A_x)$ defined by
\begin{equation}
\label{e:pitilde}
\brac{\tilde{\pi}_x ( b )} \pi_x (a) = \pi_x (ba) \mbox{ and } \pi_x(a) \brac{ \tilde{\pi}_x (b) } = \pi_x (ab)
\end{equation} 
for all $a \in A$ and $b \in M(A)$~\cite{busby}.

\end{itemize}

Archbold and Somerset have studied the structure of the $C(\bX)$-algebra $\MA$ extensively~\cite{arch_som_ss},~\cite{arch_som_mult},~\cite{arch_som_ideals},~\cite{arch_som_inner}.  In particular, two important pathologies in the behaviour  of $\MA$ should be observed:
\begin{enumerate}
\item[(i)] It always holds that $\ker ( \tilde{\pi}_x ) \supseteq H_x$, but equality does not hold in general~\cite{arch_som_ss}.
\item[(ii)] If $\A$ is a continuous $C_0(X)$-algebra, it does not necessarily follow that $\MA$ is a continuous $C(\bX)$-algebra.  A complete characterisation of those $C_0(X)$-algebras $\A$, (with $A$ $\sigma$-unital) for which $\MA$ is continuous was obtained in~\cite[Theorem 3.8]{arch_som_mult}.
\end{enumerate}

\begin{lemma}
\label{l:hx}
Let $\A$ be a $C_0(X)$-algebra, let $x \in X$ and $m \in M(A)$ such that $\tilde{\pi}_x (m) = 0$.  Then $m \in H_x$ if and only if $\norm{\tilde{\pi}_y (m)} \rightarrow 0$ as $y \rightarrow x$.
\end{lemma}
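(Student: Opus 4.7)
The proof hinges on the identity
\[
\tilde{\pi}_y \brac{ \mu_{M(A)} (f) n } = f(y) \tilde{\pi}_y (n), \qquad f \in C(\bX), \; n \in M(A), \; y \in X,
\]
which follows from~\eqref{e:pitilde} and the Dauns-Hofmann description of $\mu_{M(A)}$ via the computation $\pi_y \brac{ \mu_{M(A)}(f) n a } = f(y) \tilde{\pi}_y (n) \pi_y(a)$ for $a \in A$.  For the forward direction, given $m \in H_x$ and $\eps > 0$, approximate $m$ to within $\eps$ by a finite sum $s = \sum_{i=1}^{k} \mu_{M(A)}(f_i) n_i$ with each $f_i(x) = 0$.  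Applying the identity above gives $\tilde{\pi}_y (s) = \sum_i f_i(y) \tilde{\pi}_y (n_i)$, whose norm tends to zero as $y \to x$ by continuity of the $f_i$; contractivity of $\tilde{\pi}_y$ then forces $\norm{\tilde{\pi}_y(m)} \to 0$.

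For the reverse direction, assume $\tilde{\pi}_x(m) = 0$ and $\norm{\tilde{\pi}_y(m)} \to 0$ as $y \to x$.  Given $\eps > 0$, use local compactness of $X$ to pick a precompact open neighbourhood $U$ of $x$ on which $\norm{\tilde{\pi}_y(m)} < \eps$, and choose $g \in C_0(X)$ with $0 \leq g \leq 1$, $g(x) = 1$ and $\mathrm{supp}(g) \subseteq U$.  Then $1 - g \in C(\bX)$ vanishes at $x$, so $\mu_{M(A)}(1-g) m \in H_x$, and the decomposition $m = \mu_{M(A)}(g)m + \mu_{M(A)}(1-g)m$ reduces the claim to the estimate $\norm{\mu_{M(A)}(g)m} \leq \eps$; this gives $\mathrm{dist}(m, H_x) \leq \eps$ for every $\eps > 0$ and hence $m \in H_x$.

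The main technical obstacle is that estimate.  Because $\ker \tilde{\pi}_y$ may strictly contain $H_y$ (pathology (i) noted above), the norm of $\mu_{M(A)}(g)m$ in $M(A)$ is not automatically dominated by $\sup_{y} \norm{\tilde{\pi}_y \brac{\mu_{M(A)}(g)m}}$ via the $C(\bX)$-algebra structure of $M(A)$.  I would bypass this by descending to $A$: for a bounded approximate unit $(e_\lambda)$ of $A$, the element $\mu_{M(A)}(g) m e_\lambda$ lies in $A$, where the exact fibre-norm identity $\norm{a} = \sup_{y \in X} \norm{a(y)}$ holds.  Combined with $\brac{ \mu_{M(A)}(g) m e_\lambda }(y) = g(y) \tilde{\pi}_y(m) \pi_y(e_\lambda)$ this yields $\norm{\mu_{M(A)}(g) m e_\lambda} \leq \sup_{y \in U} \abs{g(y)} \norm{\tilde{\pi}_y(m)} \leq \eps$, and passing $\lambda \to \infty$ (using $\norm{b e_\lambda} \to \norm{b}$ for $b \in M(A)$) produces the required bound.
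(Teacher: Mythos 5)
Your proof is correct, but it takes a genuinely different route from the paper: the paper's proof is a one-line appeal to the distance formula $\norm{m+H_x} = \inf_W \sup_{y \in W} \norm{\tilde{\pi}_y(m)}$ from Archbold--Somerset \cite[Lemma 1.5(ii)]{arch_som_mult}, whereas you prove the needed special case of that formula from scratch. Your forward direction (approximating $m \in H_x$ by finite sums $\sum_i \mu_{M(A)}(f_i)n_i$ with $f_i(x)=0$ and using $\tilde{\pi}_y(\mu_{M(A)}(f)n) = f(y)\tilde{\pi}_y(n)$) establishes the inequality $\inf_W \sup_{y\in W}\norm{\tilde{\pi}_y(m)} \leq \norm{m+H_x}$ in the case needed, and your reverse direction establishes the other inequality via the decomposition $m = \mu_{M(A)}(g)m + \mu_{M(A)}(1-g)m$. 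You correctly identify the one genuine subtlety --- that $\norm{\mu_{M(A)}(g)m}$ cannot be read off from the fibre norms $\norm{\tilde{\pi}_y(\cdot)}$ because $H_y$ may be strictly smaller than $\ker\tilde{\pi}_y$ --- and your resolution (cutting down by an approximate unit $(e_\lambda)$ of $A$ so that the exact fibre-norm formula for elements of $A$ applies, then using $\norm{be_\lambda}\to\norm{b}$, which holds since $A$ is essential in $M(A)$) is sound. What your approach buys is self-containedness and transparency about where the hypothesis $\tilde{\pi}_x(m)=0$ enters (it controls the value of the estimate at $y=x$ itself, where $g(x)=1$); what the paper's citation buys is brevity and the full strength of the distance formula, which is used elsewhere in the Archbold--Somerset machinery.
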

\begin{proof}
By~\cite[Lemma 1.5(ii)]{arch_som_mult}, we have
\[
\norm{m + H_x } = \inf_W \sup_{y \in W} \norm{ \tilde{\pi}_y (m)},
\]
as $W$ ranges over all open neighbourhoods of $x$ in $X$, from which the result follows.
\end{proof}
Note that the fibres $M(A)_y$ of the $C( \bX)$-algebra $\MA$ are the quotient \Cst-algebras $\frac{M(A)}{H_y}$.  Hence for $m \in M(A)$ and $y \in Y$, $m(y)$ denotes $m + H_y$ (i.e. not $\tilde{\pi}_y (m)$ defined above).

\begin{example}
Let $\nhat = \mathbb{N} \cup \{ \infty \}$, and let $A = C( \nhat , c_0 ) \cong C_0 ( \nhat \times \mathbb{N})$ be the trivial continuous C$(\nhat )$-algebra with fibre $c_0$.  Then $\Pm (A)$ is canonically isomorphic to $\nhat \times \mathbb{N}$, and the base map $\phi_A : \Pm (A) \rightarrow \nhat$ is the projection onto the first coordinate.  Since $M(c_0 ) = \ell^{\infty} = C ( \beta \mathbb{N})$, it follows from~\cite[Corollary 3.4]{apt} that $M(A) = C( \nhat , \ell^{\infty}_{\beta} )$, where $\ell^{\infty}_{\beta}$ denotes $\ell^{\infty}$ with the strict topology induced by regarding $\ell^{\infty}$ as the multiplier algebra of $c_0$.  

$M(A)$, together  with the structure map $\mu_{M(A)} = \mu_A$, defines a $C(\nhat )$-algebra.  By~\cite[Corollary 3.9]{arch_som_mult}, $(M(A), \nhat , \mu_{M(A)})$ fails to be a continuous $C( \nhat )$-algebra.

For each $n \in \nhat$ let $\pi_n : A \rightarrow A_n \cong c_0$ be the quotient map, and $\tilde{\pi}_n : M(A) \rightarrow M(A_n) \cong \ell^{\infty}$ its strictly continuous extension to $M(A)$.  By~\cite[Theorem 4.9]{arch_som_ideals}, we have
\[ M(A)_n := M(A)/H_n = M(A_n)  \cong \ell^{\infty} \mbox{ for all }n \in \mathbb{N},\]
 while $H_{\infty} \subsetneq \ker ( \tilde{\pi}_{\infty})$ so that $M(A)_{\infty} \neq \ell^{\infty}$.  Moreover, by the same reference, there are uncountably many distinct norm-closed ideals $J$ of $M(A)$ satisfying $H_{\infty} \subsetneq J \subsetneq \ker (\tilde{\pi}_{\infty})$.  Note that by Lemma~\ref{l:hx} we have
 \[
 \ker ( \tilde{\pi}_{\infty} ) = \{ b \in C( \nhat , \ell^{\infty}_{\beta} ) : b( \infty ) = 0 \} 
 \]
 while
 \[
 H_{\infty} = \{ b \in C( \nhat , \ell^{\infty}_{\beta} ) : \norm{ b(n) } \rightarrow 0 \mbox{ as } n \rightarrow \infty \}.
 \]

\end{example}

\section{Morphisms and compactifications of $C_0(X)$-algebras}
\label{s:morph}
For a locally compact Hausdorff space $X$ and a pair of $C_0(X)$-algebras $\A$ and $(B,X,\mu_B)$, the natural notion of a morphism between $\A$ and $(B,X,\mu_B)$ is a $C_0(X)$-linear $\ast$-homomorphism. It is well-known that a $C_0(X)$-linear $\ast$-homomorphism $A \rightarrow B$ induces $\ast$-homomorphisms $A_x \rightarrow B_x$ of the corresponding fibre algebras.  In this section we describe how this notion can be extended to morphisms from a $C_0(X)$-algebra $\A$ to a $C_0(Y)$-algebra $\BY$, and we clarify what it means for such a morphism to be injective.

We introduce the notion of a compactification of a $C_0(X)$-algebra $\A$, generalising the classical definition for locally compact Hausdorff spaces.  Intuitively, we define a compactification of a $C_0(X)$-algebra as a $C(K)$-algebra $\B$, where $K$ is a compactification of $X_A$, such that $A$ is isomorphic to the ideal of continuous sections of $B$ that vanish at infinity on $X_A$.  At the level of \Cst -bundles, this is equivalent to $\B$ defining a bundle over $K$ whose restriction to $X_A$ coincides with that defined by $\A$.

The following definition is due to Kwasniewski~\cite{kwas}.
\begin{definition}
\label{d:morph}
Let $\A$ be a $C_0 (X)$-algebra and $\BY$ a $C_0 (Y)$-algebra.  By a \emph{morphism} $\A \rightarrow \BY$ we mean a pair $(\Psi ,\psi )$ where
\begin{enumerate}
\item[(i)] $\Psi :A \rightarrow B$ and $\psi : C_0 (X) \rightarrow C_0 (Y)$ are $\ast$-homomorphisms, and
\item[(ii)] for all $f \in C_0(X)$ and $a \in A$ we have
\[
\Psi ( \mu_A (f) a ) = \mu_B ( \psi (f) ) \Psi (a).
\]
\end{enumerate}
\end{definition}
\begin{remark}
Note that in the case that $Y=X$ and $\psi = \mathrm{Id}_{C_0(X)}$, a $\ast$-homomorphism $\Psi :A \to B$ gives rise to a morphism $(\Psi , \psi) : \A \to \BY$ if and only if $\Psi$ is $C_0(X)$-linear. 
\end{remark}

Let $(\Psi,\psi) : \A \to \BY$ be a morphism.  Note that dual to the $\ast$-homomorphism $\psi$ there is an open subset $U_{\psi}$ of $Y$ and a continuous, proper map $\psi^{\ast} : U_{\psi} \rightarrow X$,  such that $\psi$ is given by the formula
\begin{equation}
\label{e:delta}
\psi (f) (y) = \left\{ \begin{array}{rcl} 
f ( \psi^{\ast} (y) )& \mbox{ if } & y \in U_{\psi}, \\
0 & \mbox{ if} & y \not\in U_{\psi}
\end{array} \right.
\end{equation}
It is shown in~\cite[Section 3.1]{kwas} that the subspace $Y_B \subseteq Y$ (defined analogously to $X_A$ of~\eqref{e:xa}), is contained in $U_{\psi}$.  Moreover, composing $\Psi$ with the evaluation mappings, we get  $\ast$-homomorphisms $ \Psi_{y} : A_{\psi^* (y) } \rightarrow B_y  $ for all $y \in U_{\psi}$, so that $\Psi : A \to B$ satisfies
\begin{equation}
\label{e:morph}
\Psi (a) (y) = \left\{ \begin{array}{rcl}
\Psi_y ( a ( \psi^* (y) ) ) & \mbox{ if } & y \in U_{\psi} \\
0 & \mbox{ } &  y \not \in U_{\psi}
\end{array}
\right.
\end{equation}

Identifying  $A$ with $\go (A)$ and $B$ with $\go (B)$, $\Psi$ may be regarded as a $\ast$-homomorphism $\go (A) \to \go (B)$, and is completely determined by the formula
\[
\brac{\Psi (a)} (y) =
\Psi_y \brac{a(\psi^{\ast} (y))}
\]
for all $y \in Y_B$ ($\subseteq U_{\psi}$) and $a \in A$~\cite[Propositions 3.2 and 3.5]{kwas}.

\begin{remark}
\label{r:ext}
Let $(\Psi,\psi) : \A \to \BY$ be a morphism.  Then $\Psi$ extends to a morphism $\Psi^b : \gb (A) \to \gb (B) $ via
\[
\brac{\Psi^b (c)} (y) =
\Psi_y \brac{c(\psi^{\ast} (y))}
\]
for all $c \in \gb (A)$ and $y \in Y_B$.  Indeed, it is clear that $\Psi^b (c)$ is a cross section $Y_B \to \coprod_{y \in Y_B} B_y$ since $c(x) \in A_x$ for all $x \in X_A$.  To see that $\Psi^b (c)$ is continuous, let $y_0 \in Y_B$ and $\eps >0$. Then there is $a \in A$ and a neighbourhood $U$ of $\psi^* (y_0)$ in $X_A$ with
\[
\norm{a(x)-c(x)}< \eps
\]
for all $ x \in U$, and hence
\begin{align*}
\norm{\Psi(a)(y)-\Psi^b(c)(y)} & = \norm{\Psi_y \left[ a(\psi^* (y))-c(\psi^*(y)) \right] } \\
& \leq \norm{ a(\psi^* (y))-c(\psi^*(y))} < \eps
\end{align*}
for all $y \in (\psi^*)^{-1} (U)$.  Since $\Psi(a) \in B$, it follows that $\Psi^b(c)$ is indeed continuous.
\end{remark}

\begin{definition}
\label{d:inj}
A morphism $(\Psi, \psi) : \A \rightarrow \BY$ is called 
\begin{enumerate}
\item[(i)] \emph{injective} if the $\ast$-homomorphisms $\Psi$ and $\{ \Psi_{y} : y \in Y_A \}$ are all injective.
\item[(ii)] \emph{bijective} if it is both injective and $\Psi$ is a $\ast$-isomorphism
\item[(iii)] an \emph{isomorphism} if it is bijective and invertible, i.e. there exists a morphism $(\Phi , \phi ) : \BY \to \A$ such that $\Phi = \Psi^{-1}$ and
\[
\mu_A \brac{ ( \phi \circ \psi) (f) } = \mu_A (f) \mbox{ and } \mu_B \brac{(\psi \circ \phi )(g) } = \mu_B (g)
\]
for all $f \in C_0(X)$ and $g \in C_0 (Y)$.
\end{enumerate}
\end{definition}

The motivation for this definition of an injective morphism is as follows: consider $A \cong \go (A)$ and $B \cong \go (B)$ as algebras of cross-sections in the usual way.  Let $(\Psi, \psi) : \A \rightarrow \BY$ be an injective morphism, and identify $A \subseteq B$ and $A_{\psi (y)} \subseteq B_y$ as \Cst -subalgebras for all $y \in U_{\psi}$.  Then each $a \in A$ may be naturally regarded as a cross-section $a: Y \rightarrow \coprod_{y \in Y} B_y $ such that $a(y) \in A_{\psi (y)} \subseteq B_y$ for all $y \in U_{\psi}$ and $a(y)=0$ otherwise. In particular, we may identify $\go (A)$ canonically with a \Cst -subalgebra of $\go (B)$.

We remark that a bijective morphism need not be an isomorphism (i.e. invertible).  Clearly, if $(\Psi, \psi )$ is a bijective morphism and $\psi^{\ast}$ maps $Y$ homeomorphically onto $X$, the $(\Psi, \psi )$ will be an isomorphism. However, this requirement is too strong in general (since we do not require $Y_B$ to be dense in $Y$), this shall be addressed in Proposition~\ref{p:inj}(ii).

For our purposes, injective morphisms and isomorphisms arise in the manner described in the following proposition:

\begin{proposition}
\label{p:inj}
Let $(\Psi , \psi ) : \A \rightarrow \BY$ be a morphism.
\begin{enumerate}
\item[(i)] If $\Psi$ is an injective $\ast$-homomorphism and $\restr{\psi^*}{Y_B}$ a homeomorphism onto its image, then $(\Psi,\psi)$ is injective.
\item[(ii)] If $\Psi$ is a $\ast$-isomorphism and there is an open subset $V$ of $Y$ with $Y_B \subseteq V \subseteq U_{\psi}$ such that $\psi^{\ast}$ maps $V$ homeomorphically onto an open subset of $X$, then $(\Psi, \psi )$ is an isomorphism.
\end{enumerate}
In particular, if $X=Y$ and $\psi = \mathrm{Id}_{C_0(X)}$, then a $C_0(X)$-linear $\ast$-homomorphism $\Psi: A \to B$ defines and injective morphism (respectively, an isomorphism) $(\Psi,\psi): \A \to \BY$ if and only if $\Psi$ is injective (respectively, a $\ast$-isomorphism).
\end{proposition}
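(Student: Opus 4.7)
The plan is to prove part (i) by a localisation argument that exploits the fact that injective $\ast$-homomorphisms of C$^*$-algebras are isometric, and to deduce part (ii) by first using (i) to upgrade fibre injectivity to fibre bijectivity and then building an inverse morphism from the homeomorphism in the hypothesis.

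For (i), fix $y_0 \in Y_B$ and set $x_0 := \psi^*(y_0)$. Given $c \in A_{x_0}$ with $\Psi_{y_0}(c) = 0$, lift to some $a \in A$ with $a(x_0) = c$, so that $\Psi(a)(y_0) = \Psi_{y_0}(c) = 0$. For each $\eps > 0$, upper semicontinuity of the norm function $y \mapsto \norm{\Psi(a)(y)}$ on $Y$ furnishes an open neighbourhood $V_{\eps}$ of $y_0$ on which this norm is less than $\eps$. The homeomorphism hypothesis on $\psi^*|_{Y_B}$ then provides an open $U_{\eps} \subseteq X$ with $x_0 \in U_{\eps}$ and $U_{\eps} \cap \psi^*(Y_B) \subseteq \psi^*(V_{\eps} \cap Y_B)$. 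Choose a bump function $g_{\eps} \in C_c(X)$ with $g_{\eps}(x_0) = 1$, $0 \le g_{\eps} \le 1$, and $\mathrm{supp}(g_{\eps}) \subseteq U_{\eps}$; set $a_{\eps} := \mu_A(g_{\eps}) a$. Then $a_{\eps}(x_0) = c$ so $\norm{a_{\eps}} \ge \norm{c}$, while the morphism identity $\Psi(a_{\eps}) = \mu_B(\psi(g_{\eps})) \Psi(a)$ combined with the support constraint forces $\psi(g_{\eps})$ to vanish on $Y_B \setminus V_{\eps}$, giving $\norm{\Psi(a_{\eps})} \le \eps$. Isometry of injective $\ast$-homomorphisms yields $\norm{c} \le \norm{a_{\eps}} = \norm{\Psi(a_{\eps})} \le \eps$, so $c = 0$.

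For (ii), apply (i) to conclude each $\Psi_y$ with $y \in Y_B$ is injective; surjectivity is automatic from surjectivity of $\Psi$ (the fibre map arises from $\Psi$ by passing to a quotient), so each $\Psi_y$ is a $\ast$-isomorphism. Set $\Phi := \Psi^{-1}$. Let $W := \psi^*(V)$, an open subset of $X$, and $\theta := (\psi^*|_V)^{-1} : W \to V$; observe that $X_A = \psi^*(Y_B)$ and $Y_B = \theta(X_A)$. The task reduces to constructing a $\ast$-homomorphism $\phi : C_0(Y) \to C_0(X)$ satisfying $\phi(g)|_{X_A} = g \circ \theta|_{X_A}$ for all $g \in C_0(Y)$. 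With such a $\phi$ in hand, the morphism identity for $(\Phi, \phi)$ follows by applying $\Psi$ and using the morphism identity for $(\Psi, \psi)$, and the isomorphism equations $\mu_A \circ \phi \circ \psi = \mu_A$ and $\mu_B \circ \psi \circ \phi = \mu_B$ reduce, via Dauns-Hofmann, to the elementary identities $\psi^* \circ \theta = \mathrm{id}_W$ and $\theta \circ \psi^*|_V = \mathrm{id}_V$.

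The main technical obstacle is the construction of $\phi$ in (ii): the naive ``extension by zero'' of $g \circ \theta$ off $W$ need not be continuous at $\partial W$ (since $g$ need not vanish on $\partial V$) and need not vanish at infinity in $X$, so a more delicate argument is required — likely exploiting the freedom in defining $\phi$ off $X_A$, the openness of $V$ in $Y$, and properness or cut-off techniques. The ``in particular'' statement is then immediate: when $X = Y$ and $\psi = \mathrm{Id}_{C_0(X)}$, the map $\psi^*|_{Y_B}$ is the identity, trivially a homeomorphism, and taking $V = Y$ in (ii) makes $\psi^*|_V = \mathrm{Id}_X$ a homeomorphism onto $X$; the reverse implications — that $(\Psi, \psi)$ being an injective morphism or an isomorphism forces $\Psi$ to be injective or a $\ast$-isomorphism respectively — are built into the definitions.
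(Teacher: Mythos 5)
Part (i) of your proposal is correct and is essentially the paper's own argument run forwards rather than by contradiction: both proofs localise with a bump function supported in an open subset of $X$ that meets $\psi^{\ast}(Y_B)$ only inside $\psi^{\ast}(V_{\eps}\cap Y_B)$ (this is exactly where the hypothesis that $\restr{\psi^{\ast}}{Y_B}$ is a homeomorphism onto its image enters), estimate $\norm{\Psi(\mu_A(g_{\eps})a)}\leq\eps$ fibrewise over $Y_B$, and play this off against $\norm{\mu_A(g_{\eps})a}\geq\norm{a(x_0)}$ using the fact that the injective $\ast$-homomorphism $\Psi$ is isometric. That part needs no changes.

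Part (ii), however, is not yet a proof. By Definitions~\ref{d:morph} and~\ref{d:inj}(iii), exhibiting the inverse requires an actual $\ast$-homomorphism $\phi:C_0(Y)\to C_0(X)$ making $(\Phi,\phi)$ a morphism with $\mu_A\circ\phi\circ\psi=\mu_A$ and $\mu_B\circ\psi\circ\phi=\mu_B$; constructing $\phi$ is the entire content of (ii) beyond what (i) already gives, and you explicitly defer it as an unresolved ``technical obstacle.'' The paper simply takes $\phi(g)=g\circ\brac{\restr{\psi^{\ast}}{V}}^{-1}$ on $\psi^{\ast}(V)$, extended by zero, and verifies the two composition identities; note that these identities are only required \emph{after} composing with the structure maps, which see only the restrictions of functions to $X_A$ and $Y_B$, and on those sets the computation $(\phi\circ\psi)(f)(x)=f\brac{\psi^{\ast}(\phi^{\ast}(x))}=f(x)$ is immediate --- so you are right that one should not aim for $\phi\circ\psi=\mathrm{id}$ on the nose. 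Your observation that the extension by zero of $g\circ\theta$ may fail to be continuous on $\partial(\psi^{\ast}(V))$ or to vanish at infinity is a legitimate criticism of that formula (and the paper does not comment on it), but identifying the difficulty is not the same as overcoming it: as submitted, no $\phi$ is produced and the isomorphism claim is not established. A secondary point: the identity $X_A=\psi^{\ast}(Y_B)$ is asserted without proof; only the inclusion $\psi^{\ast}(Y_B)\subseteq X_A$ (from surjectivity of the fibre maps) comes for free, and the reverse inclusion is not obvious. The ``in particular'' clause is unaffected, since there $\psi^{\ast}=\mathrm{id}_X$ and one may take $\phi=\mathrm{Id}_{C_0(X)}$.
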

\begin{proof}
(i):  Let $y_0 \in Y_B$ and let $x_0 = \psi^* (y_0)$, and suppose for a contradiction that there is some $a \in A$ such that $\norm{a(x_0)} = 1$ and $ \Psi_{x_0} ( a(x_0)) = \Psi (a)(y_0) = 0$ in $B_{y_0}$.  Let $0 < \eps < \frac{1}{2}$, then by upper-semicontinuity of the function $ y \mapsto \norm{ \Psi (a) (y) }$ there is some open neighbourhood $W$ of $y_0$ in $Y$ such that $\norm{\Psi (a)(y)} < \eps $ for all $y \in W$.  
 
 By our assumption on $\psi^*$ the set $V:=\psi^*(W \cap Y_B)$ is a relatively open neighbourhood of $x_0$ in $\psi^{\ast} (Y_B)$. Let $\tilde{V}$ be an open subset of $X$ with $\tilde{V} \cap \psi^{\ast} (Y_B)=V$.

Take $f \in C_0 (X)$ such that $ 0 \leq f \leq 1$, $f(x_0)=1$ and such that $\restr{f}{X \backslash \tilde{V} } \equiv 0$. Note that 
\[
\norm{(\mu_A (f) a)(x_0)} = \norm{f(x_0)a(x_0)} = \norm{a (x_0)} = 1,
\]
and so
\[
\norm{\mu_A(f)a} = \sup_{x \in X} \norm{(\mu_A (f) a)(x)} \geq 1.
\]

Now, since $(\Psi,\psi)$ is a morphism, for all $y \in Y$ we have
\[
\Psi ( \mu_A(f) a )(y) = \left( \mu_B ( \psi (f)) \Psi (a) \right) (y) = f ( \psi^* (y)) \Psi (a)(y).
\]
Moreover, if $y \in Y \backslash W$, then 
\[ \psi^* (y) \in \psi^{\ast} (Y_B) \backslash V = \psi^{\ast} (Y_B) \backslash \tilde{V} \subseteq X \backslash \tilde{V}. \]
Hence 
\[ \Psi ( \mu_A (f) a) (y) = f \brac{ \psi^{\ast} (y)} \brac{\Psi (a)} (y)=0, \]
since $f$ vanishes on $X \backslash V$.  

On the other hand, if $y \in V$ then
\[
\norm{ \Psi ( \mu_A (f) a ) (y) } = \abs{ f ( \psi^* (y) ) }\ \norm{\Psi (a) (y)} \leq \norm{\Psi (a) (y) } < \eps.
\]
It follows that
\[
\norm{\Psi ( \mu_A (f) a ) } = \sup_{y \in Y} \norm{ \Psi ( \mu_A (f) a ) (y) } \leq \eps < \frac{1}{2},
\]
which contradicts our assumption that $\Psi$ is injective.

(ii): It is clear from part (i) that $(\Psi, \psi )$ is a bijective morphism.  Now, let $\phi^{\ast} : \psi (V) \to Y$ denote $\brac{ \restr{\psi^{\ast}}{V}}^{-1}$, and let $\phi : C_0 (Y) \to C_0 (X)$ be given by $\phi (g) = g \circ \phi^{\ast}$.  Then for $f \in C_0 (X)$ and $g \in C_0 (X)$ we have
\[
\brac{\phi \circ \psi}(f) = f \mbox{ and } \brac{ \psi \circ \phi } (g)  = g.
\]
Hence with $\Phi = \psi^{-1}$, $(\Phi, \phi )$ is clearly the inverse of $(\Psi, \psi )$.

The final assertion is then immediate.
\end{proof}

\begin{remark}
Note that if $(\Psi, \psi) : \A \rightarrow \BY$ is a morphism such that $\{ \Psi_y : y \in Y \}$ are all injective then it does not necessarily follow that $\Psi$ is injective.  For example let $X=[0,1) \cup (1,2]$, $Y=[0,1)$, $A = C_0 (X)$ and $B = C_0 (Y)$.  Let $\Psi = \psi : A \rightarrow B$ be the restriction homomorphism, then clearly $(\Psi, \psi)$ is a morphism. 

We have $U_{\psi} = [0,1)$ and $\psi^* : [0,1) \rightarrow [0,1) \cup (1,2]$ is the inclusion map.  Then for all $y \in U_{\psi}$, $\Psi_y : A_y \rightarrow B_y$ is the identity mapping, hence is injective, while clearly $\Psi$ is not injective.
\end{remark}
\begin{remark}
Injectivity of $\Psi$ and $\psi$ does not imply injectivity of the $\Psi_y$.  Let $C$ be a \Cst -algebra, $Y$ a compact Hausdorff space which contains more than one point, $B = C (Y,C)$ regarded as a $C(Y)$ algebra in the usual way.  If $X = \{ x \}$ is a one-point space, equip $A=B$ with the structure of a $C(X)$-algebra, and consider the morphism $(\Psi, \psi): \A \rightarrow \BY$ where $\Psi$ is the identity morphism and $\psi$ is the embedding of $\Cp$ as the constant functions on $Y$. $\Psi$ ans $\psi$ are injective. Then $\psi^*$ maps every point of $Y$ to $x$, and $\Psi_y : A=B \rightarrow B_y$ is the evaluation map for each $y$, which is not injective.
\end{remark}

\begin{definition}
\label{d:cfn}
Let $\A$ be a $C_0(X)$-algebra. A \emph{compactification} of $A$ is given by a $C(K)$-algebra $\B$ together with an injective $C(K)$-linear $\ast$-homomorphism $\Psi :A \to B $ such that
\begin{enumerate}
\item[(i)] $K$ is a compactification of $X_A$
\item[(ii)] $\Psi(A)$ is an essential ideal of $B$ and
\item[(iii)] The induced $\ast$-homomorphisms $\Psi_x : A_x \to B_x$ are isomorphisms for all $x \in X_A$.
\end{enumerate}
\end{definition}

When the $\ast$-homomorphism $\Psi$ is clear, we shall often say that $\B$ is a compactification of $\A$ and consider $A \subseteq B$.

Note that if the $C_0 (X)$-algebra $\A$ in Definition~\ref{d:cfn} is simply $C_0(X)$, then any compactification of $\A$ is given by $C(K)$ for some compactification $K$ of $X$.  More generally, if $A= C_0(X,B)$, then for any such $K$, $C(K,B)$ defines a compactification of $\A$, though in general there exist other compactifications in this case, see Corollary~\ref{c:triv}.

Based upon these elementary examples, one might conjecture that conditions (i) and (iii) of Definition~\ref{d:cfn} above imply condition (ii). However, the following example, from~\cite[p. 684]{kirch_wass} shows that this is not the case:
\begin{example}
Let $C$ be a non-nuclear \Cst -algebra. Then there is  a \Cst -algebra $D$ and a \Cst -subalgebra $D_0 \subseteq D$ for which the canonical map $ C \maxt D_0 \rightarrow C \maxt D$ is not injective.  Fixing $\alpha \in [0,1]$, let $X = [0,1] \backslash \{ \alpha \}$ and let $(A,X,\mu_A)$ be the trivial $C_0(X)$-algebra $C_0(X, C \maxt D)$.  

Let $D_{\alpha}$ be the continuous $C[0,1]$-algebra
\[ D_{\alpha} = \set{ f \in C([0,1],D) : f( \alpha ) \in D_0 }, \]
and let $B = C \maxt D_{\alpha}$.  Then $(B,[0,1],1_C \maxt \mu_{D_{\alpha}} )$ is a $C[0,1]$-algebra with fibres $B_t = C \maxt D$ for $t \neq \alpha$ and $B_{\alpha} = C \maxt D_0$ which is discontinuous at $\alpha$~\cite{kirch_wass}.  Moreover, it is easily seen that $(B,[0,1],1_C \maxt \mu_{D_{\alpha}} )$ is a compactification of $(A,X,\mu_A)$.

To see that $A$ is not an essential ideal of $B$, let $\pi : C \maxt D_0 \rightarrow C \maxt D$ be the $\ast$-homomorphism given by the universal property of the maximal tensor product. One can then construct an element $b \in B$ for which $b(x)=0$ for all $ x \in X$ and $\norm{b(\alpha)}=1$.  It is clear then that $ba$ and $ab$ are zero for all $a \in A$.
\end{example}

We also remark that if $\B$ is a compactification of $\A$, it is not true in general that there exists an injective morphism (or indeed any morphism) $(\Psi, \psi) : \A \to \B$ (since $X_A$ may fail to be locally compact).

The following proposition establishes a number of useful facts about compactifications that shall be used in subsequent sections. 
\begin{proposition}
\label{p:c0}
Let $\B$ be a compactification of $\A$ and $\Psi: A \to B$ the associated $C(K)$-linear $\ast$-homomorphism.  Then 
\begin{enumerate}
\item[(i)] Let $b \in B$, and identify $A_x$ with $B_x$ for each $x \in X_A$. If $b \in B$, the cross-section $X_A \rightarrow \coprod_{x \in X_A} A_x$, defined by the restriction $\restr{b}{X_A}$ of $b$ to $X_A$, belongs to $\gb(A)$.
\item[(ii)] For all $b \in B$ we have
\[
\norm{b} = \sup \set{ \norm{b(x)} : x \in X_A }.
\]
\item[(iii)] For all $y \in K \backslash X_A$, 
\[
\norm{b(y)} = \inf_W \sup_{x \in W \cap X_A} \norm{b(x)},
\]
where $W$ ranges over all neighbourhoods of $y$ in $K$.
\item[(iv)] $\Psi (A)$ may be identified with the ideal 
\[ \{ b \in B : b(y)=0 \mbox{ for all } y \in K \backslash X_A \} \]
of $B$.
\item[(v)] If in addition $X_A$ is locally compact, then we may identify $C_0(X_A)$ with an ideal of $C(K)$.  Under this identification, we have $\mu_B \brac{  C_0 (X_A) } \cdot B = \Psi (A)$.
\end{enumerate}
\end{proposition}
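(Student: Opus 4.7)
The plan is to prove the five items in the order (i), (iv), (ii), (iii), (v), since each uses only the preceding ones. The two main inputs are upper-semicontinuity of $z \mapsto \norm{b(z)}$ on $K$ for every $b \in B$ (from the $C(K)$-algebra structure) and the canonical identification $A \cong \go(A)$ of Theorem~\ref{t:go}(ii). For (i), given $x_0 \in X_A$ and $\eps > 0$, I would use the isomorphism $\Psi_{x_0}$ to pick $a \in A$ with $\Psi(a)(x_0) = b(x_0)$. Then $b - \Psi(a)$ vanishes at $x_0$ in $B_{x_0}$, and upper-semicontinuity of its norm on $K$ furnishes a neighbourhood $W \ni x_0$ in $K$ with $\norm{(b - \Psi(a))(z)} < \eps$ on $W$; setting $U = W \cap X_A$ and identifying $A_z$ with $B_z$ gives the continuity condition of Definition~\ref{d:cts}, while $\sup_{X_A}\norm{b(x)} \leq \norm{b}$ is immediate.

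The forward direction of (iv) is the main technical step. Fix $a \in A$ and $y \in K \setminus X_A$, and set $C_\eps = \{x \in X_A : \norm{a(x)} \geq \eps\}$. Since $a \in A \cong \go(A)$ has norm vanishing at infinity on $X$, $C_\eps$ is compact in $X$ and remains compact in $K$ under the embedding $X_A \hookrightarrow K$; Urysohn supplies $g \in C(K)$ with $0 \leq g \leq 1$, $g \equiv 1$ on $C_\eps$, and $g(y) = 0$. The $C(K)$-module structure on $A$, obtained via the composition $C(K) \to C(\bXA) \to ZM(A)$, produces $g \cdot a \in A$ with $\norm{a - g\cdot a} \leq \eps$ (the difference vanishes on $C_\eps$ and is dominated by $\norm{a(x)} < \eps$ elsewhere). $C(K)$-linearity of $\Psi$ then gives $\norm{\Psi(a) - \mu_B(g)\Psi(a)} \leq \eps$ with $\mu_B(g)\Psi(a) \in H_y$, so $\Psi(a) \in H_y$. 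Conversely, if $b \in B$ vanishes on $K \setminus X_A$, then $\{z \in K : \norm{b(z)} \geq \eps\}$ is closed and contained in $X_A$, hence compact, so $\restr{b}{X_A} \in \go(A)$ by (i); Theorem~\ref{t:go}(ii) then produces $a \in A$ with $a|_{X_A} = b|_{X_A}$, and since $\Psi(a)$ and $b$ both vanish on $K \setminus X_A$, they agree on all of $K$, giving $\Psi(a) = b$.

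For (ii), the restriction $\rho : B \to \gb(A)$, $\rho(b) = \restr{b}{X_A}$, is a $\ast$-homomorphism by (i); if $\rho(b) = 0$, then for every $a \in A$ the product $b \cdot \Psi(a)$ vanishes on $X_A$ (where $\rho(b)$ does) and on $K \setminus X_A$ (where $\Psi(a)$ does, by (iv)), so $b \cdot \Psi(A) = \{0\}$ and essentiality of $\Psi(A)$ forces $b = 0$. An injective $\ast$-homomorphism between \Cst-algebras is isometric, which is (ii). For (iii), upper-semicontinuity gives $\norm{b(y)} \geq \inf_W \sup_{W \cap X_A}\norm{b(z)}$; for the reverse, given an open neighbourhood $W$ of $y$, Urysohn produces $f \in C(K)$ with $0 \leq f \leq 1$, $f(y) = 0$, and $f \equiv 1$ on $K \setminus W$, so $\mu_B(f)b \in H_y$ and (ii) yields $\norm{b(y)} \leq \norm{\mu_B(1-f)b} = \sup_{x \in X_A}(1-f)(x)\norm{b(x)} \leq \sup_{x \in W \cap X_A}\norm{b(x)}$.

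Finally, for (v), local compactness of $X_A$ makes it open in $K$, so $C_0(X_A) = \{f \in C(K) : \restr{f}{K \setminus X_A} \equiv 0\}$ is an ideal of $C(K)$. Products $\mu_B(f)b$ with $f \in C_0(X_A)$ and $b \in B$ vanish on $K \setminus X_A$ and hence lie in $\Psi(A)$ by (iv); conversely, Cohen factorisation applied to the non-degenerate $C_0(X_A)$-module structure on $A$ writes any $a \in A$ as $a = \mu_A^{X_A}(f) a'$ for some $f \in C_0(X_A)$ and $a' \in A$, so $C(K)$-linearity of $\Psi$ gives $\Psi(a) = \mu_B(f)\Psi(a') \in \mu_B(C_0(X_A))B$.
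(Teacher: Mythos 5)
Your proof is correct, and the ordering (i), (iv), (ii), (iii), (v) is exactly what makes it hang together without circularity. Parts (i) and (iii) essentially coincide with the paper's arguments (the paper proves (iii) by contradiction with a function peaking at $y$, where you argue directly with $1-f$; it is the same Urysohn computation). The genuine divergences are in (iv), (ii) and (v). For the forward half of (iv) the paper simply asserts that $\Psi(a)(y)=0$ for $y \in K\setminus X_A$, the underlying point being that the fibres of the $C(K)$-algebra $(A,K,\mu_A^K)$ over $K\setminus X_A$ are zero because $J_y=A$ off $\mathrm{Im}(\phi_A)$; your approximation argument with the compact sets $C_\eps$ and a Urysohn function establishes the same inclusion of $\Psi(A)$ into the kernel ideal at $y$ by hand, at the cost of a few lines but without appealing to that fact. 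For (ii) the paper embeds $B$ into $M(\Psi(A))$ and uses the multiplier norm formula $\norm{b}=\sup\set{\norm{b\Psi(a)}:\norm{a}\leq 1}$ together with a contradiction argument; your route --- the restriction map $B \to \gb(A)$ is a $\ast$-homomorphism that is injective because $\Psi(A)$ is essential and each $b\Psi(a)$ vanishes fibrewise on all of $K$, hence is isometric --- is shorter and trades Busby's construction for an appeal to (iv). For the inclusion $\mu_B(C_0(X_A))B \subseteq \Psi(A)$ in (v) the paper introduces the auxiliary $C_0(X_A)$-algebra $B^0=\mu_B(C_0(X_A))B$ and invokes Theorem~\ref{t:go}(iii), whereas you observe that such products vanish on $K\setminus X_A$ and quote (iv); again a legitimate shortcut once (iv) is in place. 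The only cosmetic slip is your use of $H_y$ for the kernel ideal of $B$ at $y$, which the paper reserves for ideals of $M(A)$, but the meaning is unambiguous.
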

\begin{proof}
(i) Let $x \in X_A$ and $\eps > 0 $.  By definition of the isomorphism $\Psi_x : A_x \rightarrow B_x$ identifying $A_x$ with $B_x$, there is some $a \in A$ with $a(x) = b(x)$.  In particular, $\norm{(a-b)(x)}=0$.

Now, since $\B$ is a $C(K)$-algebra and $\Psi (a) \in B$, the norm function $y \mapsto \norm{\left( b - \Psi (a) \right) (y)}$ is upper-semicontinuous on $K$, and hence is upper-semicontinuous on $X_A$.  In particular, there is a neighbourhood $U$ of $x$ in $X_A$ such that $\norm{\left( b - \Psi (a) \right) (y)} < \eps$ for all $y \in U$.  Since $\Psi_y$ is injective for all $y \in X_A$, it follows that
\[
\norm{b(y)-a(y)}_{A_y} = \norm{ \left( b - \Psi (a) \right) (y) }_{B_y} < \eps
\]
for all $y \in U$.  Hence $\restr{b}{X_A} \in \gb (A)$.

(ii) Since $\Psi(A)$ is an essential ideal of $B$, there is an injective $\ast$-homomorphism $B \to M( \Psi (A) )$ which is the identity on $\Psi(A)$~\cite[Proposition 3.7(i) and (ii)]{busby}.  It then follows (using the construction of $M(\Psi (A))$ described in~\cite[Section 2]{busby}) that for all $b \in B$ we have
\[
\norm{b} = \sup \set{ \norm{ b \cdot \Psi (a) } : a \in A, \norm{a} \leq 1 }.
\]

Suppose for a contradiction that there were some $b \in B$ with $\norm{b}=1$ but that $\sup \set{ \norm{b(x)} : x \in X_A } = \alpha$ for some $0 \leq \alpha < 1$.  Since $\A$ is a $C_0(X)$-algebra and $\Psi$ is injective, we have 
\[
\norm{\Psi(a)} = \norm{a} = \sup_{x \in X_A}  \norm{a(x)}  = \sup_{x \in X_A}  \norm{\Psi (a)(x) } 
\]
for all $a \in A$.  In particular, if $\norm{a} \leq 1$ then for all $x \in X_A$ we would have
\begin{align*}
\norm{ \brac{b  \cdot \Psi (a)}(x)} &= \norm{b(x) \brac{\Psi (a)}(x) } \\
& \leq \alpha \cdot \norm{ \Psi (a) (x) } \leq \alpha \norm{a} < \norm{a}.
\end{align*}
Since $b \cdot \Psi (a) \in \Psi (a)$ for all $a \in A$, it would then follow that
\[
\norm{b \cdot \Psi (a) } = \sup_{x \in X_A} \norm{ \brac{ b \cdot \Psi (a)} (x)} < \norm{a}.
\]
In particular, this would imply that
\[
\norm{b} = \sup \set{ \norm{ b \cdot \Psi (a) } : a \in A, \norm{a} \leq 1 } \leq \alpha < 1,
\]
which is a contradiction.

(iii) Note that for $y \in K \backslash X_A$ condition (i) of Definition~\ref{d:cfn} ensures that we have $\Psi (a) (y) = 0$ for all $a \in A$.

Now let $b \in B$ such that $b(y)=0$ for all $y$ in $K \backslash X_A$, then for each $x \in X_A$ we have $b(x) \in \Psi_x \brac{A_x}$. Since $y \mapsto \norm{b(y)}$ is upper-semicontinuous on $K$, for each $\alpha > 0$ the set
\[
\{ y \in K : \norm{b(y)} < \alpha \}
\]
is open, and contains $K \backslash X_A$ by assumption.  Hence
\[
\set{ y \in K : \norm{b(y)} \geq \alpha }
\]
is a closed and hence compact subset of $K$, and moreover, is contained in $X_A$.  It follows that $\brac{ x \mapsto b(x) }$ defines an element of $\go (A)$, which by Theorem~\ref{t:go}(i), can only be the case if $b \in \Psi (A)$.

  (iv) Let $y \in K \backslash X_A$ and $\eps >0$. Since $x \mapsto \norm{b(x)}$ is upper-semicontinuous on $K$, there is an open neighbourhood $U$ of $y$ such that
  \[
  \sup_{x \in U} \norm{b(x)} \leq \norm{b(y)} + \eps.
  \]
  It follows that
  \[
  \inf_W \sup_{x \in W \cap X_A} \leq \norm{b(y)},
  \]
  as $W$ ranges over all neighbourhoods of $y$ in $K$.  
  
  Suppose that there were some open neighbourhood $U$ of $y$ in $K$ such that
  \[
  \sup_{x \in U \cap X_A} \norm{ b(x)} < \norm{ b(y) }.
  \]
Let $f \in C(K)$, $0 \leq f \leq 1$ with $f(y)=1$ and $\restr{f}{K \backslash U} \equiv 0$, and let $x \in X_A$. If $x \not\in U$, then $f(x)=0$ and hence $\norm{\brac{\mu_B(f) b} (x)}=0$.  If $x \in U$, then
\[
\norm{\brac{\mu_B(f) b} (x)} \leq \sup_{x \in U \cap X_A} \norm{b(x)} < \norm{b(y)}.
\]

Together with part (iii), this shows that
\begin{align*}
\norm{\mu_B (f) b } &= \sup_{x \in X_A} \norm{ \brac{ \mu_B (f) b } (x) } \\
& = \sup_{x \in U \cap X_A} \norm{b(x)} \\
& < \norm {b(y)} = \norm{ \brac{\mu_B(f) b} (y) },
\end{align*}
which is a contradiction.

(v)  Since $X_A$ is  locally compact and $K$ is a compactification of $X_A$, $X_A$ is also an open subset of $K$~\cite[3.15(d)]{gill_jer}. Then we may identify $C_0(X_A)$ with the ideal of $C(K)$ consisting of those $f \in C(K)$ vanishing on $K \backslash X_A$. In particular,
\[
\Psi \brac{ \mu_A^K (f) a } = \mu_B (f) \Psi (a)
\]
for all $f \in C_0 (X_A)$ and $a \in A$. 

Since $X_A$ is locally compact, we may regard $A$ as a $C_0(X_A)$ algebra with the same base map as $\A$ (restricting the range to $X_A$) as in Definition~\ref{d:c0y}. The corresponding structure map is then $\mu_A^{X_A}=\restr{\mu_A^K}{C_0 (X_A)}$, so that $\mu_A^K \brac{ C_0(X_A) } \cdot A = A$. Hence
\[
\Psi (A) = \mu_B (C_0 (X_A)) \cdot \Psi (A) \subseteq \mu_B (C_0 (X_A)) \cdot B,
\]
so it remains to show the reverse inclusion.

Letting $B^0 = \mu_B (C_0 (X_A)) \cdot B$, we see that $(B^0,X_A,\restr{\mu_B}{C_0 (X_A)})$ is a $C_0(X_A)$-algebra.  Moreover, if $b \in B$ and $x \in X_A$, choosing $f \in C_0(X_A)$ with $f(x)=1$ we have $\mu_B (f) b \in B^0$ and $\left( \mu_B(f)b \right)(x) = b(x)$ .  Hence for all $x \in X_A$ the fibre algebras $B_x^0$ are equal to $B_x $, which are in turn isomorphic to $A_x$ (since $\B$ is a compactification of $\A$).

In particular, for every $x \in X_A$ and $c \in B^0_x$ there is $a \in A$ with $a(x)=c$.  Hence by Theorem~\ref{t:go}(iii), $\Psi (A) = B^0$, which completes the proof.
\end{proof}

\section{The Stone-\v{C}ech compactification of a $C_0(X)$-algebra}
\label{s:cfn}

In this section we show that any $C_0(X)$-algebra $\A$ gives rise to a \Cst -algebra $\Ab$, with $A \subseteq \Ab \subseteq M(A)$, such that $\Abx$ is a $C( \bX)$-algebra, and defines a compactification of $\A$ in a natural way.

In general, $\Ab$ will be strictly contained in $M(A)$ (since we require the fibre algebras at points of $X$ to coincide with those of $A$).  For example, when $A$ is a trivial $C_0(X)$-algebra of the form $A = C_0 (X,B)$, then we shall see that $\Ab = C^b (X,B)$, while $M(A) = C^b(X , M(B)_{\beta} )$.

\begin{definition}
Let $\A$ be a $C_0(X)$-algebra, and define the closed ideal $\Ab$ of $M(A)$ via
\begin{equation}
\label{e:ab}
\Ab = \bigcap \set{ A + H_x : x \in X_A }.
\end{equation}
\end{definition}
Note that $\Ab$ is an essential ideal of $M(A)$ since it contains $A$.  We shall show that $\Ab$ gives rise naturally to a compactification of $\A$, which has many properties analogous to those of the Stone-\v{C}ech compactification of a locally compact Hausdorff space.

It is important to note that the definition of $\Ab$ is independent of the base space $X$ containing $X_A$.  More precisely, replacing $\A$ with the $C_0(X)$-algebra $\AY$ of Definition~\ref{d:c0y}, the ideals $H_x$ for $x \in X_A$ are unchanged~\cite[Lemma 2.4]{arch_som_ideals}, hence the same is true for $\Ab$.

Part (ii) of Proposition~\ref{p:abiff} was established in~\cite[Theorem 3.3]{arch_som_inner}.

\begin{proposition}
\label{p:abiff}
For $C_0(X)$-algebra $\A$ we have
\begin{enumerate}
\item[(i)] $\Ab = M(A)$ if and only if $\mu_A ( C_0(X)) \cap A \not\subseteq J_x$ for any $x \in X_A$.
\item[(ii)] If $X_A$ is compact, then $\Ab = A$. If $A$ is $\sigma$-unital, then $\Ab = A$ if and only if $X_A$ is compact.
\end{enumerate}
\end{proposition}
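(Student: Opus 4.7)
The plan is to prove the two directions of (i) directly and to cite the reference for (ii). The backward direction of (i) is easy; the forward direction is the main obstacle, and I will handle it via a functional calculus construction combined with a localisation argument.

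For the ``if'' direction of (i), suppose $f \in C_0(X)$ satisfies $\mu_A(f) \in A$ and $f(x) \neq 0$, and normalise so $f(x) = 1$. Then $1 - f$, regarded as an element of $C(\beta X)$, vanishes at $x$, so $1_{M(A)} - \mu_A(f) = \mu_{M(A)}(1 - f) \in H_x$. Hence $1 \in A + H_x$, forcing $A + H_x = M(A)$. Intersecting over all $x \in X_A$ yields $\Ab = M(A)$.

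For the ``only if'' direction, assume $\Ab = M(A)$ and fix $x \in X_A$. From $1 \in A + H_x$ pick $a \in A$ with $1 - a \in H_x$, and, replacing $a$ by $a^* a$, assume $a \geq 0$. By Lemma~\ref{l:hx} and the multiplicativity of $\tilde{\pi}_y$, $\|\tilde{\pi}_y(a) - 1_{M(A_y)}\| \to 0$ as $y \to x$, so on some open neighbourhood $V$ of $x$ in $X$ the spectrum of $\tilde{\pi}_y(a)$ is contained in $[1/2, 3/2]$. Pick a continuous $\phi \colon [0, \infty) \to [0, 1]$ with $\phi(0) = 0$ and $\phi \equiv 1$ on $[1/2, \infty)$; then $\phi(a) \in A$ and, for each $y \in V \cap X_A$, $\tilde{\pi}_y(\phi(a)) = \phi(\tilde{\pi}_y(a)) = 1_{M(A_y)}$. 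By local compactness of $X$, choose $f \in C_0(X)$ with $f(x) = 1$ and $\mathrm{supp}(f) \subseteq V$.

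The crucial identity is $\mu_A(f) = \mu_A(f)\phi(a)$. For every $y \in X_A$, $\tilde{\pi}_y(\mu_A(f)(1 - \phi(a))) = f(y)(1_{M(A_y)} - \tilde{\pi}_y(\phi(a)))$ vanishes, either because $\tilde{\pi}_y(\phi(a)) = 1_{M(A_y)}$ (when $y \in V$) or because $f(y) = 0$ (when $y \notin V$, using $\mathrm{supp}(f) \subseteq V$). Thus $\mu_A(f)(1 - \phi(a))$ lies in $\bigcap_{y \in X_A} \ker \tilde{\pi}_y$; by the defining relation of $\tilde{\pi}_y$, any such multiplier annihilates $A$ (since $\bigcap_{y \in X_A} J_y = \{0\}$), and essentiality of $A$ in $M(A)$ forces it to be zero. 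Therefore $\mu_A(f) = \mu_A(f)\phi(a) \in A$ (as $A$ is a two-sided ideal of $M(A)$ and $\phi(a) \in A$), and $f(x) = 1 \neq 0$ exhibits $\mu_A(f) \not\in J_x$ as required. Finally, part (ii) is precisely~\cite[Theorem 3.3]{arch_som_inner}, so I would simply cite this reference.
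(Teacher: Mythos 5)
Your proof is correct, and for part (i) it takes a genuinely different route from the paper's. For the implication ``$\mu_A(C_0(X)) \cap A \not\subseteq J_x$ for all $x \in X_A$ implies $\Ab = M(A)$'' the paper invokes \cite[Proposition 2.2(iii)]{arch_som_mult} to obtain surjectivity of the canonical map $A_x \to M(A)_x$; your observation that $1_{M(A)} - \mu_A(f) = \mu_{M(A)}(1-f) \in H_x$ whenever $f(x)=1$ and $\mu_A(f) \in A$, so that $1 \in A+H_x$, is more elementary. For the converse the paper argues by contradiction, using \cite[Lemma 2.1]{arch_som_mult} to produce a primitive ideal $R$ of $M(A)$ containing $A$ with $\phi_{M(A)}(R)=x$ and hence containing $A+H_x$; you instead construct the witness explicitly, extracting from $1 \in A+H_x$ a positive $a \in A$ whose fibres are close to the identity near $x$ (via Lemma~\ref{l:hx}), promoting it to a local unit $\phi(a) \in A$ by functional calculus, and cutting down by a bump function to conclude $\mu_A(f) = \mu_A(f)\phi(a) \in A$ with $f(x)=1$. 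This trades the paper's two external citations for a longer but self-contained computation; both are valid. The only step I would ask you to make explicit is the identity $\tilde{\pi}_y(\mu_A(f)) = f(y)1_{M(A_y)}$, which you use repeatedly; it follows from $\pi_y(\mu_A(f)a) = f(y)\pi_y(a)$ together with non-degeneracy of $A_y$ in $M(A_y)$, and it is also what justifies the equivalence (implicit in your first paragraph) between $\mu_A(f) \notin J_x$ and $f(x) \neq 0$ when $\mu_A(f) \in A$ and $x \in X_A$. For (ii) you cite the same reference as the paper does; note only that the labels (i) and (ii) are transposed in the paper's own proof, so the displayed argument there is in fact the proof of (i).
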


\begin{proof}
 To see (ii), suppose first that $\Ab = M(A)$, so that in particular, $A+H_x = M(A)$ for all $x \in X_A$.  If there were some $x \in X_A$ with $\mu_A(C_0(X)) \cap A \subseteq J_x$, then by~\cite[Lemma 2.1 (i)$\Rightarrow$(iii)]{arch_som_mult}, there would be some $R \in \Pm (M(A))$ with $R \supseteq A$ and $\phi_{M(A)} (R) = x$.  Hence (using~(\ref{e:hx})) $R \supseteq H_x$, so that $R \supseteq A+H_x$.  This would imply that $A+H_x \subsetneq M(A)$, which is a contradiction.

Conversely, suppose that $\mu_A ( C_0(X)) \cap A \not\subseteq J_x$ for any $x \in X_A$.  Then we have that the canonical embedding of $A_x$ into $M(A)_x$ is surjective $x \in X_A$ by~\cite[Proposition 2.2(iii)]{arch_som_mult}.  But then for any $m \in M(A)$, there is $a \in A$ with $m(x)=a(x)$, and hence $m-a \in H_x$.  It follows that $m \in A+H_x$, so $A+H_x = M(A)$.  Since this is true for all $x \in X_A$, $\Ab = M(A)$.

(i) is shown in~\cite[Theorem 3.3]{arch_som_inner} (note that the proof of (ii)$\Rightarrow$(i) therein does not require $A$ to be $\sigma$-unital). 
\end{proof}

\begin{example}
Let $B$ be a \Cst -algebra and let $\A$ be the trivial $C_0(X)$-algebra $A=C_0(X,B)$.  Then by~\cite[Corollary 3.4]{apt}, $M(A) = C^b ( X , M(B)_{\beta} )$, where $M(B)_{\beta}$ denotes $M(B)$ equipped with the strict topology.

For each $x \in X$, the $\ast$-homomorphism $\tilde{\pi}_x : M(A) \rightarrow M(B)$ of~\eqref{e:pitilde} coincides with the evaluation map at $x$. Then by Lemma~\ref{l:hx}, $H_x$ is the ideal
\[
\{ c \in M(A) : \norm{\tilde{\pi}_y(c)} \rightarrow 0 \mbox{ as } y \rightarrow x \}
\]
of $M(A)$.  Hence for $m \in M(A)$, $m$ belongs to $A+H_x$ if and only if there is some $a \in A$ such that $m-a \in H_x$, i.e., if and only if there is some $b \in B$ such that
\[
\norm{m(y)-b} \rightarrow 0 \mbox{ as } y \rightarrow x.
\]
It follows that $\Ab$ consists of the subalgebra $C^b (X,B)$ of $M(A)$.

Note that in this example, we have $X_A = X$, and hence the results of Proposition~\ref{p:abiff} are easily observed.  Indeed, it is clear that $A = \Ab$ if and only if $C_0(X,B) = C^b (X,B)$, which occurs if and only if $X$ is compact.

Moreover, the structure map $\mu_A :C_0(X) \rightarrow ZM(A)$ in this case is given by pointwise multiplication by elements of $C_0(X)$.  Hence if $B$ is unital,
\[ \mu_A (C_0(X)) \cap A =  \set{ f \cdot 1_B : f \in C_0(X) } \not\subseteq J_x
\]
for any $x \in X$, while if $B$ is non-unital, $\mu_A(C_0(X)) \cap A = \emptyset$.

It is clear the $\Ab = M(A)$ if and only if $\Ab$ is unital, which occurs if and only if $B$ is, hence if and only if $\mu_A (C_0(X)) \cap A \not\subseteq J_x$ for any $x \in X_A = X$.
\end{example}

Let $\A$ be a $C_0(X)$-algebra and $K$ a compactification of $X_A$.  Then for any $f \in C(K)$, $\restr{f}{X_A}$ belongs to $C^b(X_A)$.  It follows that $\restr{f}{X_A}$ extends to a continuous function $\overline{\restr{f}{X_A}} \in C( \bXA)$.  In particular, we get a unital, injective $\ast$-homomorphism $C(K) \to C( \bXA)$, and so we may identify $C(K)$ with a unital \Cst -subalgebra of $C(\bXA)$.

In the following Theorem we shall make use of the $C(\bXA)$-algebra $\Abxa$ of Definition~\ref{d:c0y}.

\begin{theorem}
\label{t:ab}
Let $\A$ be a $C_0(X)$-algebra and $\Abxa$ the $C(\bXA)$-algebra canonically defined by $\A$.  Then
\begin{enumerate}
\item[(i)] There is an injective $\ast$-homomorphism $\pi: ZM(\Ab) \rightarrow ZM(A)$, and hence letting $\mu_{\Ab} : C( \beta X_A) \rightarrow ZM( \Ab )$ be the composition $\mu_{\Ab} = \pi \circ \nu_A$, the triple $(\Ab, \bX , \mu_{\Ab}))$ is a $C( \bX)$-algebra. Moreover, if $K$ is any compactification of $X_A$, then letting $\mu_{\Ab}^K = \restr{\mu_{\Ab}}{C(K)}$, $(\Ab, K , \mu_{\Ab}^K )$ is a $C(K)$-algebra.
\item[(ii)] For any compactification $K$ of $X_A$, $\AbK$ is a compactification of $\A$.
\item[(iii)] If $\A$ is a continuous $C_0(X)$-algebra such that $X_A=X$, then $\Abx$ is a continuous $C(\bX)$-algebra.
\end{enumerate}
\end{theorem}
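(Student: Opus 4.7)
The plan is to handle the three parts in order. The common thread is that $\Ab$ sits inside $M(A)$, so it inherits a workable structure from $M(A)$, while the strict extensions $\tilde{\pi}_x : M(A) \to M(A_x)$ of the quotient maps $\pi_x$ provide the key tool for computing fibres.

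For (i): since $A$ is an essential ideal of $M(A)$ and $A \subseteq \Ab \subseteq M(A)$, $A$ is also an essential ideal of $\Ab$, so restricting the action of any central multiplier of $\Ab$ to $A$ defines the injective $\ast$-homomorphism $\pi : ZM(\Ab) \to ZM(A)$. To factor $\nu_A$ through $\pi$, fix $f \in C(\beta X_A)$. The element $\nu_A(f) \in ZM(A) \subseteq M(A)$ preserves each ideal $A + H_x$ for $x \in X_A$, since both $A$ and $H_x$ are ideals of $M(A)$, and hence preserves $\Ab = \bigcap_{x \in X_A}(A + H_x)$. The restriction of $\nu_A(f)$ to $\Ab$ is therefore a central multiplier of $\Ab$ (centrality inherited from its centrality in $M(A)$); setting $\mu_{\Ab}(f)$ to be this restriction gives a unital $\ast$-homomorphism $\mu_{\Ab} : C(\beta X_A) \to ZM(\Ab)$ with $\pi \circ \mu_{\Ab} = \nu_A$. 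Precomposing with the restriction $C(\beta X) \to C(\beta X_A)$ (respectively, the canonical unital embedding $C(K) \hookrightarrow C(\beta X_A)$ described immediately before the theorem) then endows $\Ab$ with its $C(\beta X)$-algebra structure (respectively, its $C(K)$-algebra structure).

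For (ii): Conditions (i) and (ii) of Definition~\ref{d:cfn} are immediate from the construction, so the substance lies in (iii). I would build an inverse for $\Psi_x$ from $\tilde{\pi}_x$. For $b \in \Ab$ with any decomposition $b = a + h$ ($a \in A$, $h \in H_x$), the identity $\tilde{\pi}_x(b) = a(x) \in A_x$ shows that $\tilde{\pi}_x\big|_{\Ab}$ lands in $A_x$ rather than only in $M(A_x)$. A direct computation using $\tilde{\pi}_x(\mu_{\Ab}^K(f)) = f(x) 1_{M(A_x)}$ yields $\tilde{\pi}_x(\mu_{\Ab}^K(f)b) = f(x)\tilde{\pi}_x(b)$ for $f \in C(K)$, so this restriction is $C(K)$-linear and descends to a surjection $\tau_x : (\Ab)_x^K \to A_x$ with $\tau_x \circ \Psi_x = \mathrm{id}_{A_x}$; in particular $\Psi_x$ is isometric. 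The remaining step---and the main obstacle of the whole proof---is to show that $\tau_x$ is injective, equivalently that $\ker(\tilde{\pi}_x) \cap \Ab$ is contained in the fibre ideal $I_x^K := \mu_{\Ab}^K\brac{\set{f \in C(K) : f(x) = 0}} \Ab$. Decomposing such an element as $a + h$ with $a \in J_x \subseteq I_x^K$ reduces the task to showing $H_x \cap \Ab \subseteq I_x^K$. For this I would argue by approximation: given $\varepsilon > 0$, Lemma~\ref{l:hx} supplies a neighbourhood $W$ of $x$ in $K$ with $\norm{\tilde{\pi}_y(h)} < \varepsilon$ for $y \in W \cap X_A$; Urysohn on the compact Hausdorff space $K$ produces $f_{\varepsilon} \in C(K)$ with $f_{\varepsilon}(x) = 0$, $0 \le f_{\varepsilon} \le 1$, and $f_{\varepsilon} \equiv 1$ off $W$; and a fibrewise estimate over $X_A$ in the $C_0(X_A)$-algebra $A$ bounds the operator norm of $h - \mu_{\Ab}^K(f_{\varepsilon})h$ as a multiplier of $A$ by $\varepsilon$, placing $h$ within $\varepsilon$ of the element $\mu_{\Ab}^K(f_{\varepsilon})h \in I_x^K$ and hence inside the closed ideal $I_x^K$.

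For (iii): assume $\A$ is continuous with $X_A = X$, and fix $b \in \Ab$. The fibre identification from (ii) gives $\norm{b(x)}_{(\Ab)_x^{\beta X}} = \norm{\tilde{\pi}_x(b)}_{A_x}$ for each $x \in X$, and continuity of $\A$ combined with boundedness of $b$ as a multiplier ensures that this is a bounded continuous function on $X$; it therefore extends uniquely to $\tilde{\varphi} \in C(\beta X)$. Proposition~\ref{p:c0}(iii), applicable now that $\Abx$ is known via (ii) to be a compactification of $\A$, writes $\norm{b(y)} = \inf_W \sup_{x \in W \cap X}\norm{b(x)}$ for $y \in \beta X \setminus X$, and continuity of $\tilde{\varphi}$ on the compact space $\beta X$ identifies this infimum of suprema with $\tilde{\varphi}(y)$. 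Hence $y \mapsto \norm{b(y)}$ coincides with $\tilde{\varphi}$ on all of $\beta X$ and is continuous, so $\Abx$ is a continuous $C(\beta X)$-algebra. The critical load-bearing step throughout is the surjectivity of $\Psi_x$ in (ii); once that is secured by the Lemma~\ref{l:hx}-plus-Urysohn approximation above, everything else reduces to standard multiplier-algebra bookkeeping and Stone-\v{C}ech uniqueness.
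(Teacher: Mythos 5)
Parts (i) and (ii) of your proposal are correct and follow essentially the same route as the paper. For (i), the paper's proof actually constructs $\pi$ in the direction $ZM(A)\to ZM(\Ab)$ (so that $\mu_{\Ab}=\pi\circ\nu_A$ typechecks); your reading, which restricts $\nu_A(f)$ to the ideal $\Ab$ and records $\pi\circ\mu_{\Ab}=\nu_A$, is an equivalent repair of what is evidently a typo in the statement. For (ii), the paper also proves surjectivity of $A_x\to\Ab_x$ by writing $b=a+c$ with $c\in H_x$ and asserting that $c(x)=0$ in $\Ab_x$; your Urysohn-plus-Lemma~\ref{l:hx} approximation showing that $H_x\cap\Ab$ lies in the fibre ideal of $\AbK$ at $x$ supplies exactly the detail the paper leaves as ``clear'', and it is correct (only note that the neighbourhood furnished by Lemma~\ref{l:hx} is a neighbourhood in $X$, so it must be traded for a $K$-open set whose trace on $X_A$ it contains before applying Urysohn on $K$). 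In the second half of (iii), extracting $\norm{b(y)}$ for $y\in\bX\setminus X$ from Proposition~\ref{p:c0}(iii) and matching it against the continuous extension $\tilde\varphi$ is a legitimately cleaner route than the paper's C$^{\ast}$-seminorm-uniqueness argument.

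The genuine gap is the first step of (iii): you claim that $x\mapsto\norm{\tilde{\pi}_x(b)}$ is continuous on $X$ because ``continuity of $\A$ combined with boundedness of $b$ as a multiplier ensures'' it. That inference is false as stated. If continuity of $\A$ together with norm-boundedness of a multiplier forced continuity of the fibrewise norm function, then $\MA$ would be continuous whenever $\A$ is, and the paper explicitly records (citing Archbold--Somerset) that this fails. Continuity here genuinely uses the defining property $b\in A+H_x$ for every $x\in X$, which your part (iii) never invokes. Two repairs are available with tools you already have: either write $b=a+h$ with $a\in A$ and $h\in H_x$, so that $\bigl|\,\norm{\tilde{\pi}_y(b)}-\norm{a(y)}\,\bigr|\le\norm{\tilde{\pi}_y(h)}\to0$ as $y\to x$ by Lemma~\ref{l:hx} while $y\mapsto\norm{a(y)}$ is continuous; or follow the paper and choose $g\in C_0(X)$ equal to $1$ on a compact neighbourhood of $x_0$, note that $\mu_{\Ab}(g)b\in A$ by Proposition~\ref{p:c0}(v), and observe that its norm function agrees with that of $b$ near $x_0$. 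With either insertion your argument is complete.
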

\begin{proof}
(i): Since $\Ab$ is an essential ideal of $M(A)$, there is an injective $\ast$-homomorphism $\iota: M(A) \rightarrow M(\Ab)$ which is the identity on $\Ab$. Setting $\pi= \restr{\iota}{ZM(A)}$, then since $A$ is essential in $\Ab$, $\pi$ maps $ZM(A)$ into $ZM(\Ab)$. It follows that there is a unital $\ast$-homomorphism $\mu_{\Ab} := \pi \circ \nu_A : C( \bX ) \rightarrow ZM(\Ab)$, so that $\Abx$ is a $C(\beta X_A)$-algebra.

If $K$ is a compactification of $X_A$, then since $C(K)$ is a unital \Cst -subalgebra of $C( \beta X_A)$, the $\ast$-homomorphism $\mu_{\Ab}^K : C(K) \rightarrow ZM( \Ab )$ is unital, and in particular, non-degenerate.

(ii): It is clear from the definition of $\mu_{\Ab}^K$ that we have $\mu_{\Ab}^K (f) a = \mu_A^K(f)a$ for all $f\in C(K)$ and $a \in A$, hence $A \to \Ab$ is $C(K)$-linear and we have a morphism $\AK \to \AbK$. It is an injective morphism (i.e. the maps $A_x \rightarrow \Ab_x$ are injective) by Proposition~\ref{p:inj}. Hence conditions (i) and (ii) of Definition~\ref{d:cfn} are satisfied.

 To see condition (iii) of Definition~\ref{d:cfn}, we first claim that the fibre algebras $\Ab_x$ of $\AbK$, where $x \in X_A$, do not depend on our choice of compactification $K$ of $X_A$.  For clarity, let us denote by $\psi_K : C(K) \rightarrow C( \beta X_A)$ the usual injective $\ast$-homomorphism, and by $\psi_K^{\ast} : \beta X_A \rightarrow K$ the dual continuous surjection.  Then $\restr{\psi_K^*}{X_A}$ is the identity map, and moreover, $\psi_K^* ( \beta X_A \backslash X_A ) = K \backslash X_A$ by~\cite[Theorem 6.12]{gill_jer}.  
 
 Denote by $\phi_{\Ab}^K : \Pm ( \Ab ) \rightarrow K$ the base map of the $C(K)$-algebra $\AbK$.  Then it is clear from the definition of $\mu_{\Ab}^K  : C(K) \rightarrow ZM ( \Ab )$ above that
 \[
 \phi_{\Ab}^K = \psi^*_K \circ \phi_{\Ab}^{\beta X_A}.
 \]
 In particular, this implies that for $P \in \Pm ( \Ab )$ and $x \in X_A$
 \[
 \phi_{\Ab}^K (P) = x \mbox{ if and only if } \phi_{\Ab}^{\bX} (P) = x.
 \]
 Hence for all such $x$, we have
 \[
 \mu_{\Ab}^K \left( C_0 ( K \backslash \{ x \} ) \right) \cdot \Ab = \mu_{\Ab}^{\bX} \left( C_0 ( \bX \backslash \{ x \} ) \right) \cdot \Ab.
 \]
 Since the fibres corresponding to points of $X_A$ of $\AbK$ and $\Abx$ are quotients of $\Ab$ by these ideals, it follows that $\Ab_x$ does not depend on the choice of compactification  $K$ of $X_A$.

 Now let $x \in X_A$ and note that for $c \in \Ab$, $c(x)=0$ (as a section of $\AbK$) whenever $c \in H_x$ by the definition of $\mu_{\Ab}^K$. Now if $b \in \Ab$, write $b = a + c$ for some $a \in A$ and $c \in H_x$, then it is clear that $a(x)=(a+c)(x)=b(x)$.  In particular, the canonical map $A_x \rightarrow \Ab_x$ is surjective for all $x \in X_A$.

(iii): We first show that norm functions of elements of $\Ab$ are continuous on $X_A$.  Note that $X_A$ is locally compact since $\A$ is continuous.

Take $b \in \Ab$ and let $f_b: X \rightarrow \Real$ be the function $f_b (x) = \norm{ b (x) }$. For $x_0 \in X$ let $K_{x_0}$ be a compact neighbourhood of $x_0$ in $X$ and $g \in C_0 (X)$ with $\restr{g}{K_{x_0}} \equiv 1$.  Then by Proposition~\ref{p:c0}(v), $\mu_{\Ab}(g)b \in A$, and $x \mapsto \norm{(\mu_{\Ab}(g) b)(x)}$ is continuous on $X$.  Since $x \mapsto \norm{(\mu_{\Ab}(g) b)(x)}$ agrees with $f_b$ on $K_{x_0}$, it follows that $f_b$ is continuous at $x_0$ for any $x_0$, and so $f_b$ is continuous on $X$.

 Denote by $\overline{f_b}$ the unique extension of $f_b$ to a continuous function on $\beta X$, and by $g_b$ the function $g_b(y) = \norm{b(y)}$ on $\beta X$.  We claim that $\overline{f_b} = g_b.$
 
 Let $y \in \bX$ and let $(x_{\alpha})$ be a net in $X$ converging to $y$.  Note that for each $\alpha$ the function $b \mapsto \norm{ b (x_{\alpha})}$, where $b \in \Ab$, is a C$^{\ast}$-seminorm on $\Ab$.  It follows that $b \mapsto \overline{f_b} (y) = \lim_{\alpha} f_b (x_{\alpha})$ is a C$^{\ast}$-seminorm on $\Ab$.  Since the C$^{\ast}$-norm on $\Ab_y$ is unique, it suffices to prove that $\overline{f_b }(y) = 0  $ if and only if $b(y) = 0 $.
 
 Suppose first that $b (y) = 0$ and $\varepsilon >0$.  Then since $x \mapsto \norm{b (x)}$ is upper-semicontinuous on $\bX$, there is an open neighbourhood $U$ of $y$ in $\bX$ such that $\norm{b (x) } < \varepsilon $ for all $x \in U$.  It follows that there is $\alpha_0$ with $f_{b} (x_{\alpha}) = \norm{b (x_{\alpha})} < \varepsilon$ whenever $\alpha \geq \alpha_0$.  Since $\overline{f_{b}}$ is continuous and $x_{\alpha} \rightarrow y$, this implies that $\overline{f_b} (y) = 0$.
 
 Now suppose that $\overline{f_b}(y) = 0$, and let $\varepsilon > 0 $.  Let $U = \{ x \in \bX : \overline{f_b} (x) < \varepsilon \}$ and take $g \in C( \bX )$ with $0 \leq g \leq 1$, $g(y)=1$ and $\restr{g}{\bX \backslash U } \equiv 0$.  Then
\begin{align*}
\norm{ b - \mu_{\Ab}(1-g) \cdot b } & = \norm{ \mu_{\Ab}(g) \cdot b } \\
& = \sup_{x \in \bX } \norm{ (\mu_{\Ab}(g) \cdot b ) (x) } \\
& = \sup_{x \in X} \norm{( \mu_{\Ab}(g) \cdot b ) (x) } \\
& = \sup_{x \in U \cap X} | g(x) | \norm{b (x) } \leq \varepsilon. 
\end{align*}
Since $\varepsilon$ was arbitrary, it follows that $b \in H_y$.  Hence $b (y) = 0 $ by the definition of $\Ab_y$.

\end{proof}

\begin{example}
If $\A$ is as in Example~\ref{e:ab} and $K$ is a compactification of $X$ then $\AbK$ is a compactification of $\A$. Here $\mu_{\Ab}^K$ is given by pointwise multiplication.  

It is easy to see that $\AbK$ need not be a continuous $C(K)$-algebra in general. Indeed, consider $X = \mathbb{R}$, $A=C_0(\mathbb{R})$ and $K$ the one-point compactification of $\mathbb{R}$.
\end{example}

\section{The algebra of bounded continuous sections}
\label{s:ext}

In this section, we examine the structure of the algebra of continuous sections of $\AbK$, and in particular, of $\Abx$.  We show that $\Ab$ has the following Stone-\v{C}ech-type property: every continuous bounded section in $\gb (A)$ has a unique extension to a continuous section in $\Gamma^b( \Ab)$ (irrespective of our choice of compactification $K$ of $X_A$ over which $\Ab$ defines a $C(K)$-algebra). As a consequence, we show that every trivial C$^{\ast}$-bundle over a locally compact Hausdorff space $X$ extends uniquely to a continuous \Cst -bundle over $\bX$ (though this extension may fail to be trivial) with the Stone-\v{C}ech extension property above.

For Theorem~\ref{t:ext} below, it will be necessary to return to the full notation of Definition~\ref{d:cts} to avoid ambiguity.  Let $\A$ be a $C_0(X)$-algebra and $\B$ a compactification of $\A$.  We shall denote by $\Gamma^b (\B)$ the \Cst-algebra of norm-bounded cross-sections $K \rightarrow \coprod_{y \in K} B_y$ that are continuous with respect to the $C(K)$-algebra $\B$ (in the sense of Definition~\ref{d:cts}). 

 Note that this notation allows us to distinguish between section algebras $\Gamma^b ( \AbK )$ for different compactifications $K$ of $X$, where $\AbK$ is  defined as in Theorem~\ref{t:ab}.  We shall continue to use $\gb (A)$ for bounded continuous cross sections $X_A \to \coprod_{x \in X_A} A_x$, since this is unambiguous.

\begin{theorem}
\label{t:ext}
Let $\B$ be a compactification of the $C_0(X)$-algebra $\A$, and identify each $b \in B$ with the corresponding element of $\go ( \B )$. 
\begin{enumerate}
\item[(i)] The map $b \mapsto \restr{b}{X_A}$ is an injective $\ast$-homomorphism $\Gamma_0 (\B) \rightarrow \gb (A)$.
\item[(ii)]  If $B= \Ab$, then the map $b \mapsto \restr{b}{X_A}$ is a $\ast$-isomorphism of $\Gamma_0 (\AbK)$ onto $\gb( A )$.
\item[(iii)]  Every continuous section $a \in \gb (A)$ extends uniquely to a continuous section $\overline{a} \in \Gamma^b ( \AbK)$.
\end{enumerate}
\end{theorem}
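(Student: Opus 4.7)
The plan is as follows. Part (i) is essentially a direct corollary of Proposition~\ref{p:c0}: for $b \in \Gamma_0(\B)$ identified with the corresponding element of $B$, parts (i) and (ii) of that proposition yield $\restr{b}{X_A} \in \gb(A)$ and $\norm{b} = \sup_{x \in X_A} \norm{b(x)}$, giving well-definedness and injectivity; the $\ast$-homomorphism property is immediate from pointwise operations. The main work lies in (ii); once this is in hand, (iii) will follow quickly.

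For (ii) with $B = \Ab$, the task is surjectivity of $b \mapsto \restr{b}{X_A}$. Given $a \in \gb(A)$, I would first construct a candidate $\overline{a} \in M(A)$ by fibrewise multiplication: set $(\overline{a} \cdot b)(x) = a(x) b(x)$ for $b \in A$ and $x \in X_A$ (and $0$ otherwise), with the symmetric formula for right multiplication. Local approximation of $a$ by elements of $A$ gives continuity of $\overline{a} \cdot b$ on $X_A$, and the estimate $\norm{(\overline{a} \cdot b)(x)} \leq \norm{a}\,\norm{b(x)}$ forces vanishing at infinity; by Theorem~\ref{t:go}(ii) this places $\overline{a} \cdot b$ in $A$. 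The multiplier axiom is a fibrewise tautology, so $a \mapsto \overline{a}$ will define a $\ast$-homomorphism $\gb(A) \to M(A)$.

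The heart of the argument is to verify that $\overline{a} \in \Ab = \bigcap_{x_0 \in X_A}(A + H_{x_0})$. Since $\tilde{\pi}_y(\overline{a})$ acts on $A_y$ as left multiplication by $a(y)$, one obtains $\norm{\tilde{\pi}_y(\overline{a} - c)} = \norm{a(y) - c(y)}$ for $c \in A$ (trivially zero when $y \in X \setminus X_A$), so by Lemma~\ref{l:hx} the membership $\overline{a} \in A + H_{x_0}$ reduces to producing a single $c \in A$ with $\norm{a(y) - c(y)} \to 0$ as $y \to x_0$ in $X$. This is the principal obstacle: pointwise continuity of $a$ at $x_0$ supplies only $\varepsilon$-dependent local approximations, and these must be patched into one element of $A$ with the correct asymptotic behaviour. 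My approach will be a telescoping construction: choose $c_n \in A$ and nested open neighbourhoods $V_n \supseteq V_{n+1}$ of $x_0$ in $X$ with $\norm{a(y) - c_n(y)} < 2^{-n}$ for $y \in V_n \cap X_A$; by local compactness of $X$, select compact neighbourhoods $K_{n+1} \subseteq \mathrm{int}(K_n) \subseteq K_n \subseteq V_n$ and cutoff functions $f_n \in C_0(X)$ with $0 \leq f_n \leq 1$, $f_n \equiv 1$ on $K_{n+1}$ and support in $K_n$. Setting
\[
c = c_1 + \sum_{n \geq 2} \mu_A(f_n)(c_n - c_{n-1}),
\]
the estimate $\norm{(c_n - c_{n-1})(y)} < 3 \cdot 2^{-n}$ on $V_n \cap X_A$ (which contains the support of $f_n$ intersected with $X_A$) makes the series converge absolutely in $A$. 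For $y \in K_M \cap X_A$ only the first $M$ terms contribute nontrivially at $y$, and a direct telescoping estimate will give $\norm{a(y) - c(y)} \leq 2^{-(M-1)}$, which tends to $0$ as $y \to x_0$.

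This will establish (ii). Part (iii) then follows at once: $\overline{a}$ is identified via Theorem~\ref{t:go}(ii) with a section in $\Gamma_0(\AbK) \subseteq \Gamma^b(\AbK)$ extending $a$. For uniqueness, any two extensions $\overline{a}, \overline{a}' \in \Gamma^b(\AbK)$ give a continuous section $\overline{a} - \overline{a}'$ vanishing on the dense subset $X_A \subseteq K$; at any $y$ in the base with nonzero fibre, continuity yields arbitrarily close local approximations $a' \in \Ab$ whose restrictions to $X_A$ are uniformly small, and Proposition~\ref{p:c0}(iii) then forces $(\overline{a} - \overline{a}')(y) = 0$.
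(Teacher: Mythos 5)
Your proposal is correct, and parts (i) and (iii) follow the paper in substance, but the crux of (ii) is handled by a genuinely different and considerably heavier argument than the paper's. You correctly reduce membership of $\overline{a}$ in $A+H_{x_0}$ (via Lemma~\ref{l:hx}) to producing one $c\in A$ with $\norm{a(y)-c(y)}\to 0$ as $y\to x_0$, but you then identify this as the principal obstacle and resolve it by telescoping together $\eps$-dependent local approximations on shrinking neighbourhoods. The paper's point is that no patching is needed: since $A\to A_{x_0}$ is surjective, one may take $c\in A$ with $c(x_0)=a(x_0)$ \emph{exactly}; then $a-c$ is itself a continuous section, its norm function is upper-semicontinuous on $X_A$ and vanishes at $x_0$, so $\norm{(a-c)(y)}\to 0$ as $y\to x_0$ automatically, and Lemma~\ref{l:hx} finishes the step in one line. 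Your construction does go through, though the bookkeeping has a harmless slip: for $y\in K_M\cap X_A$ it is not true that only the first $M$ terms contribute (the terms with $n>M$ are supported inside $K_M$); working with the largest $M'$ for which $y\in K_{M'}$ one obtains a bound of the form $5\cdot 2^{-M}$ rather than $2^{-(M-1)}$, which still tends to $0$ and so does not affect the conclusion. On the credit side, you make explicit the embedding of $\gb(A)$ into $M(A)$ by fibrewise multiplication and verify that it lands in $M(A)$ via Theorem~\ref{t:go}, a point the paper passes over with the phrase ``regard $\gb(A)$ as a \Cst -subalgebra of $M(A)$''. For uniqueness in (iii) you argue directly on the difference of two extensions using Proposition~\ref{p:c0}(iii), whereas the paper simply quotes the injectivity of the restriction map from (ii); both are valid.
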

\begin{proof}
(i): The fact that the restriction of any $b \in B$ to $X_A$ belongs to $\gb (A)$ follows from Proposition~\ref{p:c0}(i).  Moreover, it is clear that the map $b \mapsto \restr{b}{X_A}$ is a $\ast$-homomorphism.  Injectivity follows from Proposition~\ref{p:c0}(ii), since if $b,c \in B$ with $\restr{b}{X_A} = \restr{c}{X_A}$, then $\norm{b-c} = \sup_{x \in X_A} \norm{ (b-c)(x) } = 0$.

(ii): Now let $B=\Ab$, and regard both $\Ab$ and $\gb (A)$ as \Cst -subalgebras of $M(A)$. It is clear from part (i) that $\Ab \subseteq \gb (A)$. Suppose that $m \in \gb (A)$ and $x \in X_A$, then there is some $a \in A$ with $a(x)=m(x)$.  Then since $a-m \in \gb (A)$ it follows that $y \mapsto \norm{(a-m)(y)}$ is upper-semicontinuous on $X_A$. As $(a-m)(x)=0$,  there is some open neighbourhood $U$ of $x$ in $X_A$ with $\norm{(a-m)(y)} < \varepsilon $ for all $y \in U$.  

Denote by $\tilde{\pi}_y : M(A) \rightarrow M(A_x)$ the $\ast$-homomorphism extending $\pi_y : A \rightarrow A_y$ of equation~(\ref{e:pitilde}) for each $y \in X_A$.  Then since $\norm{\tilde{\pi}_y (b) } \leq \norm{ b(y) }$ for all $y \in X_A$ and $b \in M(A)$, we have 
\[ \norm{\tilde{\pi}_y ( a-m )} \leq \norm{ (a-m)(y)} < \eps \]
for all $y \in U$.  Hence $\norm{\tilde{\pi}_y (a-m) } \rightarrow 0$ as $ y \rightarrow x$, so that $a-m \in H_x$ by Lemma~\ref{l:hx}.  

This ensures that for any $m \in \gb (A)$ and $x \in X_A$, $m \in A + H_x$, so that $ \gb (A) \subseteq \Ab$.

(iii): If $a \in \gb (A)$, we choose $\overline{a}$ to be the element of $\Ab$ whose image under the isomorphism of part (ii) is $a$. Then $\overline{a}$ canonically defines an element of $\Gamma^b ( \AbK)$ with $\overline{a}(x) = a (x)$ for all $x \in X_A$. Uniqueness follows from the fact that the restriction map of part (ii) is an isomorphism.
\end{proof}

The conclusion of Theorem~\ref{t:ext} might appear counter-intuitive at first, in that the extension property of $\AbK$ is true for \emph{any} compactification $K$ of $X$.  Indeed, for a commutative \Cst -algebra $C_0(X)$, we know that $\bX$ is the \emph{unique} compactification of $X$ having the property that every $f \in C^b (X)$ extends to a continuous function $\overline{f} \in C( \bX)$.  Nonetheless, given any compactification $K$ of $X$, $C( \bX )$ may be equipped with the structure of a $C(K)$-algebra. The unique extension $\overline{f} \in C( \bX)$ of $f \in C^b (X)$ above then defines a continuous section $K \rightarrow \coprod_{y \in K} C( \bX)_y$, as the following example describes:

\begin{example}
Let $\A$ be the $C_0 ( X )$-algebra defined by $C_0 ( X )$.  Now, $\Ab \cong C^b (X) \cong C( \bX)$, and by Theorem~\ref{t:ab}(i), the $C(K)$-algebra $\AbK$ gives rise to a compactification of $\A$ for any compactification $K$ of $X$.  Note that $\mu_{\Ab}^K : C(K) \rightarrow ZM( \Ab)$ is given by the natural (unital) embedding of $C(K)$ into $C( \bX)$ in each case. Hence the corresponding base map $\phi_{\Ab}^K : \Pm ( \Ab ) \cong \bX \rightarrow K$ is the Stone-\v{C}ech extension to $\bX$ of the homeomorphic embedding of $X$ into $K$.

Since $(\Ab, \bX , \mu_{\Ab} )$ is a compactification of $\A$, the fibre algebras are given by $\Ab_x = A_x = \mathbb{C}$ for all $x \in X$. For $y \in K \backslash X$ we have $\Pm (\Ab_y ) \cong (\phi_{\Ab}^K )^{-1} (y)$, so that
\[
\Ab_y = C ( (\phi_{\Ab}^K )^{-1} (y) )
\]
for all such $y$.

Note that in the particular case where $K = \hat{X} := X \cup \{ \infty \}$ (the one-point compactification of $X$), we have
\[
\Ab_{\infty} = C( \bX \backslash X ).
\]
\end{example}
\begin{remark}
The fact that the fibre algebras of $\AbK$ at points of $x$ are given by $\mathbb{C}$ may be deduced from the fact that $\phi_{\Ab}^K : \bX \rightarrow K$ maps $\bX \backslash X$ to $K \backslash X$~\cite[Theorem 6.12]{gill_jer}.  Indeed, it then follows that $(\phi_{\Ab}^K)^{-1} ( \{ x \} ) = \{ x \}$ for any $x \in X$, and so $\Pm ( \Ab_x ) = \{ x \}$ consists of a single point for all such $x$.
\end{remark}
\begin{remark}
\label{r:unique}
The fact that $\bX$ is the unique compactification of $X$ with the property that any $f \in C^b (X)$ extends to $\overline{f} \in C( \bX)$ can still be recovered in the language of  $C_0(X)$-algebra compactifications.  Indeed, let the continuous $C_0(X)$-algebra $\A$ be given by $A=C_0(X)$ in the usual way.  Then since $\Ab = C^b (X)$, every positive function $g \in C^b (X)_+$ occurs as the norm-function $x \mapsto \norm{g(x)}$.  

Suppose that $K$ is a compactification of $X$ such that $\AbK$ is a continuous $C(K)$-algebra, every such $g$ extends uniquely to a continuous function $\overline{g} \in C(K)$, namely, the norm function of the continuous section $\overline{g} \in \Gamma ( \AbK )$ of Theorem~\ref{t:ext}.  Since this is true for all $g \in C^b (X)_+$, it follows that $K = \bX$.

In summary, $(\Ab, \bX , \mu_{\Ab})$ is unique in the following sense: let $A=C_0(X)$ and $\A$ the corresponding continuous $C_0(X)$-algebra. Let $\B$ be a compactification of $\A$ such that
\begin{enumerate}
\item[(i)] $\B$ is a continuous $C(K)$-algebra, and
\item[(ii)] Every $b \in \gb (\A)$ extends to $\overline{b} \in \Gamma ( \B )$.
\end{enumerate}
Then there is an isomorphism $\B \cong (\Ab , \bX , \mu_{\Ab} )$.  We shall extend this to more general continuous $C_0(X)$-algebras in Section~\ref{s:univ}.
\end{remark}

The Stone-\v{C}ech compactification of a completely regular space $X$ is functorial in the sense that if $Y$ is another completely regular space and $\phi : X \to Y$ a continuous map, $\phi$ has a unique extension to $\phi^{\beta} : \bX \to \beta Y$.  As a consequence of Theorem~\ref{t:ext}, we can now show that $\Abx$ has the corresponding property with respect to morphisms of $C_0(X)$-algebras.

\begin{corollary}
\label{c:functorial}
Let $\A$ be a $C_0(X)$-algebra, $\BY$ a $C_0(Y)$-algebra and $(\Psi,\psi): \A \to \BY$ a morphism.  Then there is a morphism
\[
\left( \Psi^{\beta} , \psi^{\beta} \right) : \Abx \to ( B^{\beta} , \beta Y_B , \mu_{B^{\beta}} )
\]
where $\Psi^{\beta}$ extends $\Psi$ and $\restr{\psi^{\beta} (f)}{Y_B} = \restr{\psi (f)}{Y_B}$ for all $f \in C_0 (X)$.
\end{corollary}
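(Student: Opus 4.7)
The plan is to combine the section-algebra extension of $\Psi$ from Remark~\ref{r:ext} with Stone-\v{C}ech functoriality applied to the base-space map $\psi^*$, identifying the relevant \Cst-algebras via Theorem~\ref{t:ext}(ii). Concretely, I would define $\Psi^\beta : \Ab \to B^{\beta}$ by transporting the $*$-homomorphism $\Psi^b$ of Remark~\ref{r:ext} across the $*$-isomorphisms $\Ab \cong \gb (A)$ and $B^{\beta} \cong \gb (B)$ of Theorem~\ref{t:ext}(ii); the pointwise formula $(\Psi^b (c))(y) = \Psi_y ( c ( \psi^*(y) ) )$ immediately shows that $\Psi^\beta$ restricts to $\Psi$ on the canonically embedded copy of $A$ in $\Ab$.

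To build $\psi^\beta : C(\bXA) \to C( \beta Y_B )$, I would produce a continuous map $\tilde\psi : \beta Y_B \to \bXA$ by taking the Stone-\v{C}ech extension of the composite $Y_B \to X_A \hookrightarrow \bXA$, where the first arrow is $\restr{\psi^*}{Y_B}$, and then set $\psi^\beta (g) := g \circ \tilde\psi$. The required compatibility $\restr{\psi^\beta (f)}{Y_B} = \restr{\psi (f)}{Y_B}$ for $f \in C_0 (X)$ is then immediate from the formula $\psi (f)(y) = f( \psi^*(y))$ in~\eqref{e:delta} together with $\restr{\tilde\psi}{Y_B} = \restr{\psi^*}{Y_B}$.

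It remains to verify the morphism axiom $\Psi^\beta ( \mu_{\Ab} (g) c ) = \mu_{B^{\beta}} ( \psi^\beta (g) ) \Psi^\beta (c)$ for $g \in C(\bXA)$ and $c \in \Ab$. After identifying elements of $\Ab$ and $B^{\beta}$ with their images in $\gb (A)$ and $\gb (B)$, and observing that $\mu_{\Ab} (g)$ acts on sections by pointwise multiplication by the restriction of $g$ to $X_A$, this reduces to the pointwise identity $\Psi_y ( g( \psi^*(y) ) c ( \psi^*(y) ) ) = g ( \psi^* (y) ) \Psi_y ( c ( \psi^*(y) ) )$ at each $y \in Y_B$, which holds because each $\Psi_y$ is a $*$-homomorphism. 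Injectivity of the restriction $\Gamma_0 ( B^{\beta} ) \to \gb (B)$ from Theorem~\ref{t:ext}(i) then upgrades this pointwise equality to the required identity of elements of $B^{\beta}$.

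The main obstacle is that $\restr{\psi^*}{Y_B}$ need not land in $X_A$, so the composite used to define $\tilde\psi$ is not directly available at exceptional points $y \in Y_B$ with $\psi^*(y) \in X \backslash X_A$. At any such $y$ the fibre $A_{\psi^*(y)}$ is zero and $\Psi_y$ is the zero map, so both sides of the morphism axiom automatically vanish at $y$ regardless of the choice of $\tilde\psi (y)$; one then extends $\tilde\psi$ continuously through these exceptional points by any convenient device, so that the only essential constraints are those imposed by the portion of $Y_B$ on which $\restr{\psi^*}{Y_B}$ does land in $X_A$.
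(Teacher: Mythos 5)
Your argument is correct and is essentially the proof the paper gives: the paper likewise transports $\Psi^b$ of Remark~\ref{r:ext} across the isomorphisms of Theorem~\ref{t:ext}, obtains $\psi^{\beta}:C(\beta X_A)\to C(\beta Y_B)$ from the map $f\mapsto f\circ\psi^{*}$ on bounded continuous functions (which is exactly the Gelfand dual of your Stone extension $\tilde{\psi}$), and verifies the morphism identity pointwise on $Y_B$ before upgrading it to an identity in $B^{\beta}$ via Proposition~\ref{p:c0}(ii), just as you do via the injectivity of restriction. Your closing paragraph flags an edge case ($\psi^{*}(Y_B)\not\subseteq X_A$) that the paper passes over silently; your observation that both sides of the morphism identity vanish at such points is the right way to dispose of it, though ``extend $\tilde{\psi}$ by any convenient device'' is the one spot where a careful writeup would need to say how continuity is preserved.
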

\begin{proof}
As discussed in Remark~\ref{r:ext}, we have a $\ast$-homomorphism $\Psi^b : \gb (A) \to \gb (B)$ defined via
\[
\Psi^b(c)(y) =  \Psi_{y} (c(\psi^*(y)))
\]
for all $c \in \gb (A)$ and $y \in Y_B$.  Taking the composition with the isomorphisms $\Ab \cong \gb (A)$ and $B^{\beta} \cong \gb (B)$ of Theorem~\ref{t:ext} gives a $\ast$-homomorphism $\Psi^{\beta} : \Ab \to B^{\beta}$ extending $\Psi$.

To construct $\psi^{\beta}$ we first define a $\ast$-homomorphism $\psi^b : C^b (X_A) \to C^b (Y_B)$ via the composition
\[
\psi^b (f)(y) = f ( \psi^* (y) ),
\]
where $f \in C^b (X_A)$ and $y \in Y_B$ (note that the domain of $\psi^*$ contains $Y_B$).  Then $\psi^b$ induces a $\ast$-homomorphism $\psi^{\beta} : C( \beta X_A ) \to C( \beta Y_B)$, which has the property that $\restr{\psi^{\beta} (f)}{Y_B} = \restr{\psi (f)}{Y_B}$ for all $f \in C_0 (X)$ by construction.

Finally, to see that $(\Psi^{\beta},\psi^{\beta} )$ is indeed a morphism, first note that for any $c \in \Ab , f \in C( \beta X_A )$ and $y \in Y_B$, the definitions of $\Psi^{\beta}$ and $\psi^{\beta}$ ensure that
\begin{align*}
\left[ \Psi^{\beta} \left( \mu_{\Ab} (g) c \right) \right] (y) & = \Psi_y \left[ g \left( \psi^* (y) \right) c \left( \psi^* (y) \right) \right] \\
& = \brac{\psi^{\beta} (g) (y)} \brac{ \Psi^{\beta} (c) (y) }\\
& = \left[ \mu_{B^{\beta}} \left( \psi^{\beta}(g) \right) \Psi^{\beta} (c) \right] (y).
\end{align*}
 In other words,
\[
 \left[\Psi^{\beta} \left( \mu_{\Ab} (g) c \right) - \mu_{B^{\beta}} \left( \psi^{\beta}(g) \right) \Psi^{\beta} (c) \right] (y) =0
\]
for $y \in Y_B$. Since $(B^{\beta}, \beta Y , \mu_{B^{\beta}} )$ is a compactifciation of $\BY$, Proposition~\ref{p:c0}(ii) shows that any $d \in \B^{\beta}$ has $\norm{d} = \sup_{y \in Y_B} \norm{d(y)}$, so we must have
\[
\Psi^{\beta} \left( \mu_{\Ab} (g) c \right) = \mu_{B^{\beta}} \left( \psi^{\beta}(g) \right) \Psi^{\beta} (c), 
\] 
which shows that $(\Psi^{\beta},\psi^{\beta})$ is indeed a morphism.
\end{proof}

Note that the commutative \Cst -algebra $A = C_0(X)$ is a trivial $C_0(X)$-algebra with fibre $\mathbb{C}$, and that $\Ab = C^b (X)$ is a trivial $C( \bX)$-algebra with fibre $\mathbb{C}$.  It is natural to ask whether or not the same is true for a trivial $C_0(X)$-algebra $A$ of the form $A= C_0 (X,B)$ for some \Cst -algebra $B$.  It was shown in~\cite{williams_tensor} that it is not true in general that every $f \in C^b (X,B)$ extends to a continuous function $f: \bX \rightarrow B$.  In particular, we cannot expect $C^b(X,B)$ and $C( \bX , B )$ to be isomorphic in general.

Consider now the usual identification of $f \in C^b (X,B)$ with the corresponding cross section $X \rightarrow \coprod_{x \in X} B$ of the trivial bundle over $X$ with fibre $B$.  Corollary~\ref{c:triv} below shows that Theorems~\ref{t:ab} and~\ref{t:ext} give rise to an extension $\overline{f} : \bX \rightarrow \coprod_{y \in \bX} \Ab_y$ of $f$ to a continuous section of the $C( \bX)$-algebra $(\Ab , \bX , \mu_{\Ab} )$.

Recall that a locally compact Hausdorff space $X$ is said to be \emph{pseudocompact} if every continuous function $f : X \rightarrow \mathbb{C}$ is necessarily bounded.  

\begin{corollary}
\label{c:triv}
Let $B$ be a \Cst -algebra and let $\A$ be the trivial $C_0(X)$-algebra defined by $A = C_0 (X,B)$.  Then
\begin{enumerate}
\item[(i)] $\gb (A) = C^b (X, B)$, hence
\item[(ii)] every $a \in C^b (X,B)$ (regarded as a cross-section of the trivial bundle over $X$ with fibre $B$) extends uniquely to a continuous cross-section $\overline{a} : \bX \rightarrow \coprod_{y \in \bX} \Ab_y$ of the bundle associated with the continuous $C( \bX)$-algebra $(\Ab , \bX , \mu_{\Ab})$.
\end{enumerate}
Moreover, the following are equivalent:
\begin{enumerate}
\item[(iii)] $\Ab$ is canonically isomorphic to $C( \bX , B )$,
\item[(iv)] either $B$ is finite dimensional or $X$ is psuedocompact.
\end{enumerate}
\end{corollary}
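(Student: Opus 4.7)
My plan for (i) is to unwind Definition~\ref{d:cts} in the trivial case: since $A = C_0(X,B)$ has constant fibre $B$ and $X_A = X$, a cross-section is just a function $b : X \to B$, and the continuity condition of Definition~\ref{d:cts} forces $b$ to be norm-continuous (uniform proximity on a neighbourhood to an element of $A$ preserves continuity). Conversely, for a continuous bounded $b : X \to B$ and $x_0 \in X$, a cutoff $h \in C_c(X)$ with $h \equiv 1$ on a neighbourhood $V$ of $x_0$ makes $a(y) := h(y) \cdot b(x_0)$ an element of $A$; choosing $V$ so that $\norm{b(y) - b(x_0)} < \eps$ on $V$ gives $\norm{b(y) - a(y)} < \eps$ on $V$. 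Hence $\gb(A) = C^b(X,B)$, and part (ii) is then Theorem~\ref{t:ext}(iii) applied with $K = \bX$.

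By the example following Proposition~\ref{p:abiff}, $\Ab$ is identified as the \Cst-subalgebra $C^b(X,B) \subseteq M(A)$. The restriction map $f \mapsto \restr{f}{X}$ furnishes an injective, $C(\bX)$-linear $\ast$-homomorphism $C(\bX,B) \hookrightarrow \Ab$, and a canonical isomorphism $\Ab \cong C(\bX,B)$ is equivalent to surjectivity of this map, i.e.\ to every $f \in C^b(X,B)$ admitting a continuous extension to $\bX$. Thus (iii) reduces to the extension question.

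For (iv) $\Rightarrow$ (iii): if $B$ is finite-dimensional, fix a linear basis $b_1, \dots, b_n$ and write $f = \sum_i f_i \cdot b_i$ with $f_i \in C^b(X)$ obtained via bounded coordinate functionals; the classical Stone--\v{C}ech property extends each $f_i$ to $\overline{f_i} \in C(\bX)$, so $\overline{f} := \sum_i \overline{f_i} \cdot b_i \in C(\bX,B)$ extends $f$. If instead $X$ is pseudocompact, then for any $f \in C^b(X,B)$ the image $f(X) \subseteq B$ is pseudocompact as a continuous image, and a pseudocompact subset of the metric space $B$ is compact; hence $f$ factors through the compact Hausdorff space $\overline{f(X)}$, and the universal property of $\bX$ yields the extension.

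For the contrapositive of (iii) $\Rightarrow$ (iv), suppose $B$ is infinite-dimensional and $X$ is not pseudocompact. I would pick an unbounded continuous $h : X \to [0, \infty)$ and points $x_n$ with $h(x_n) > h(x_{n-1}) + 2$; then the open sets $U_n := h^{-1}((h(x_n)-1, h(x_n)+1))$ are pairwise disjoint and the family $\{U_n\}$ is locally finite (any neighbourhood of $z \in X$ on which $h$ varies by less than $1$ meets only finitely many $U_n$), so $\{x_n\}$ is closed discrete in $X$. Choose cutoffs $f_n \in C_c(X)$ with $0 \leq f_n \leq 1$, $f_n(x_n) = 1$ and $\mathrm{supp}(f_n) \subseteq U_n$, and use Riesz's lemma in the infinite-dimensional $B$ to produce a norm-bounded sequence $(b_n)$ with $\norm{b_n - b_m} \geq 1/2$ for $n \neq m$. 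Then $f := \sum_n f_n \cdot b_n$ has at most one nonzero summand at each point, so $\norm{f}_{\infty} \leq \sup_n \norm{b_n} < \infty$, and local finiteness of $\{U_n\}$ forces continuity; thus $f \in C^b(X,B)$. If $f$ extended to $\overline{f} \in C(\bX,B)$, then the infinite set $\{x_n\}$, being closed discrete in $X$ (which is open in $\bX$), would have to accumulate at some $y \in \bX \setminus X$; a subnet $(x_{n_\alpha}) \to y$ would force $b_{n_\alpha} = f(x_{n_\alpha}) \to \overline{f}(y)$ in $B$, and extracting a subsequence from this convergent subnet (possible since $B$ is a metric space) would contradict the almost-orthogonality of $(b_n)$. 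The main obstacle is precisely this direction: one must simultaneously exploit non-pseudocompactness of $X$ to pin the accumulation of a discrete sequence into the Stone--\v{C}ech remainder, exploit infinite-dimensionality of $B$ via Riesz, and translate convergence of a subnet back into subsequence convergence using the metric structure of $B$.
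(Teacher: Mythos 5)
Your parts (i) and (ii) take essentially the paper's own route: for (i) the paper likewise cuts off with a compactly supported function to land in $A$ (it uses $f\cdot a$ where you use the locally constant approximant $h\cdot b(x_0)$ --- both work) and deduces norm-continuity of a section in $\gb(A)$ from local uniform approximation by elements of $A$; and (ii) is, as in the paper, just Theorem~\ref{t:ab}(iii) plus Theorem~\ref{t:ext}(iii). The genuine divergence is in (iii)$\Leftrightarrow$(iv): the paper disposes of this in one line by citing \cite[Corollary 2]{williams_tensor}, which states precisely that the restriction map $C(\bX,B)\to C^b(X,B)$ is surjective if and only if $B$ is finite dimensional or $X$ is pseudocompact, whereas you reprove that result from scratch. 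Your reduction of (iii) to surjectivity of this restriction map, via the identification $\Ab=C^b(X,B)$ from the example following Proposition~\ref{p:abiff}, is exactly right, and both halves of your direct argument are sound: the finite-dimensional case via bounded coordinate functionals and the scalar Stone--\v{C}ech property; the pseudocompact case via the facts that continuous images of pseudocompact spaces are pseudocompact and that pseudocompact metrizable spaces are compact, so $f$ factors through a compact subset of $B$; and the converse via a locally finite family of bumps carrying a Riesz-separated sequence $(b_n)$, whose hypothetical extension to $\bX$ would give a cluster point $y\in\bX\setminus X$ of the closed discrete set $\set{x_n}$ (its $\bX$-closure is compact, so cannot stay inside $X$) and hence, since every neighbourhood of $y$ contains infinitely many distinct $x_n$ and $B$ is metric, infinitely many $b_n$ in a small ball --- contradicting $\norm{b_n-b_m}\geq\tfrac12$. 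What the citation buys the paper is brevity; what your argument buys is self-containedness, and it is in substance Williams' own proof of that corollary.
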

\begin{proof}
(i): Let $a \in C^b (X, B)$ and $x \in X$.  If $U$ is a  neighbourhood of $x$ in $X$ with compact closure then there is $f \in C_0 (X)$ with $\restr{f}{\overline{U}} \equiv 1 $, so that $f \cdot a \in A$.  Moreover, for all $y \in U$ we have $\norm{(f\cdot a -a)(y)} = 0$, hence $a \in \gb (A)$.

Conversely, let $c \in \gb (A), x \in X$ and $\eps > 0$.  Then there is some $a \in A$ and a neighbourhood $U$ of $x$ such that
$
\norm{c(y)-a(y)} < \frac{\eps}{2}
$
for all $y \in U$.  Hence 
\[
\norm{c(y) - c(x)} \leq \norm{c(y)-a(y)}+ \norm{a(x)-c(x)} < \frac{\eps}{2} + \frac{\eps}{2} = \eps , 
\]
so that $y \mapsto c(y)$ is continuous at $x$.

(ii): By Theorem~\ref{t:ab}(iii), $(\Ab , \bX , \mu_{\Ab} )$ is a continuous $C( \bX)$-algebra.  The fact that every $a \in C^b(X,B)$ admits a continuous extension to $\overline{a} \in \Gamma^b ( (\Ab , \bX , \mu_{\Ab} ) )$ follows from Theorem~\ref{t:ext}. 

The equivalence of (iii) and (iv) follows from~\cite[Corollary 2]{williams_tensor}, which shows that the natural embedding $C( \bX , B ) \hookrightarrow C^b (X , B)$ is surjective if and only if either $B$ is finite dimensional or $X$ is pseudocompact.
\end{proof}

\begin{example}
Let $\A$ be the $C_0 ( \mathbb{N} )$-algebra $C_0 ( \mathbb{N} , B ) = c_0 (B)$.  Then $\Ab = C^b (\mathbb{N} , B ) = \ell^{\infty} (B)$, which defines a $C (\bX)$-algebra with respect to the natural multiplication of sequences by functions in $C(\bX)$.  For $y \in \bX \backslash X$, the fibre algebras $\Ab_y$ are given by ultrapowers of $B$.
\end{example}

\begin{remark}
Let $A=C_0(X,B)$ be a trivial $C_0(X)$-algebra.  Then Corollary~\ref{c:triv} shows that $\Ab = C^b(X,B)$  defines a trivial $C(\bX)$-algebra if and only if either $B$ is finite dimensional or $X$ is psuedocompact.  However, for general $B$, it is clear that $A$ admits a trivial compactification over $\bX$, namely $C( \bX , B)$.

Consider the case of a locally trivial $C_0(X)$-algebra $\A$ with constant fibre $C$.  If $C=M_n$ for some $n$, then $A$ is an $n$-homogeneous \Cst -algebra and $\Pm (A)$ is homeomorphic to $X$.  Moreover, in this case we have $A^b = M(A)$, and the following are shown to be equivalent in~\cite[Proposition 2.9]{phillips_recursive}:
\begin{enumerate}
\item[(i)] $A$ is of finite type, i.e., there exists a finite open cover $\{U_i : 1 \leq i \leq m \}$ of $X$ such that $\mu_A (C_0(U_i)) \cdot A \cong C_0 (U_i , C )$ for all $i$,
\item[(ii)] $\Abx$ is a locally trivial $C(\bX)$-algebra,
\item[(iii)] There exists a locally trivial compactification $\B$ of $\A$ over some compactification $K$ of $X$.
\end{enumerate}
It would be interesting to know whether or not the equivalence of (i) and (iii) still holds in the case of a locally trivial $C_0(X)$-algebra $\A$ with infinite dimensional fibre $C$. By considering the trivial case, Corollary~\ref{c:triv} shows that we cannot expect property (ii) to be equivalent to (i) and (iii) in general.  

It is clear, however, that (i) is a necessary condition for (iii).  Indeed, given such a $\B$, for each $y \in K$ let $V_y$ be an open neighbourhood of $y$ in $K$ such that $\mu_{B} \left( C_0 (V_y ) \right) \cdot B \cong C_0 (V_y , C )$.  Then since $K$ is compact, we can choose $y_1, \ldots , y_n \in K$ such that $V_{y_1}, \ldots , V_{y_n}$ cover $K$.  Setting $U_i = V_{y_i} \cap X$ for $1 \leq i \leq n$, the $\{ U_i : 1 \leq i \leq n \}$ cover $X$, and for each $i$, $C_0 ( X) \cdot C_0 ( V_{y_i} ) = C_0 (U_i )$. Moreover, together with Proposition~\ref{p:c0}(v), for each $i$ we have
\begin{align*}
\mu_A (C_0(U_i)) \cdot A & = \mu_A (C_0(U_i)) \cdot \left( \mu_B ( C_0 (X) ) \cdot B \right) \\
& = \mu_B \left( C_0 ( X) \right) \cdot \left( \mu_{B} \left( C_0 ( V_{y_i} ) \right) \cdot B \right) \\
& =  \mu_{B} \left( C_0(X) \right) \cdot C_0 ( V_{y_i} , C ) \\
& = C_0 (U_i , C ),
\end{align*}
so that $\A$ is of finite type.
\end{remark}

\section{The fibre algebras of $\Abx$}
\label{s:fibres}

In this section, we consider the question of characterising, for a compactification $\AbK$ of a $C_0(X)$-algebra $\A$, the set of points of $K$ for which the fibre algebras $\Ab_y$ of $\AbK$ are nonzero.  By analogy with the subset $X_A$ of $X$ defined in~\eqref{e:xa}, we denote this space by $K_{\Ab}$.  In other words, we consider the question of whether or not $K_{\Ab}$ is a compactification of $X_A$, and in particular, whether or not $\brac{ \beta X_A }_{\Ab}$ is the Stone-\v{C}ech compactification of $X_A$.

 We show in Theorem~\ref{t:nonzero} that this is the case for a large class of $C_0(X)$-algebras $\A$ (including all $\sigma$-unital continuous $C_0(X)$-algebras).  Moreover, we show in Theorem~\ref{t:notremote} that for all $\sigma$-unital $C_0(X)$-algebras, the set $K_{\Ab} \backslash X_A$ is at least dense in $K \backslash X_A$.

  In Example~\ref{e:remote1} however, we exhibit a separable $C_0 (X)$-algebra $\A$ and a point $y \in \beta X_A \backslash X_A$ for which $\Ab_y = \set{ 0 }$.  The key observation here is the fact that there exist remote points in the Stone-\v{C}ech remainder of $X_A$.

A key technique in establishing the above uses a deep technical result of Archbold and Somerset~\cite[Theorem 2.5]{arch_som_ideals}, applied in a similar manner to~\cite[Theorem 3.3]{arch_som_inner}.   We shall not need the full strength of this result here, Proposition~\ref{p:mult} below establishes a significant consequence needed for our purposes.

\begin{proposition}
\label{p:mult}
Let $\A$ be a $\sigma$-unital $C_0(X)$-algebra and $u$ a strictly positive element of $A$ with $\norm{u}=1$.  If $f \in C^b (X_A)$ with $0<f(x) \leq 1$ for all $x \in X_A$, then there is an element $b \in \Ab$ with $\norm{b(x)} \geq 1$ whenever $f(x) \leq \norm{u(x)}$.
\end{proposition}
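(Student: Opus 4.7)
The strategy is to invoke Theorem~2.5 of \cite{arch_som_ideals} in a manner paralleling its use in \cite[Theorem~3.3]{arch_som_inner} (recalled here as Proposition~\ref{p:abiff}(ii)). The $\sigma$-unitality of $A$, together with the strictly positive element $u$, supplies a canonical commutative approximate identity $\set{u^{1/n}}_{n \in \Nat}$ in $A$, whose fibre norms at $x \in X_A$ are $\norm{u(x)}^{1/n}$ and therefore approach $1$ pointwise on $X_A$.

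Morally, the element $b$ we seek is the ``cross-section'' $x \mapsto u(x)/\max(f(x), \norm{u(x)})$, which would satisfy $\norm{b(x)} = 1$ exactly where $f(x) \leq \norm{u(x)}$ and $\norm{b(x)} \leq 1$ elsewhere. The obstruction to using this formula directly is that $x \mapsto \norm{u(x)}$ is only upper-semicontinuous on $X_A$, so the naive $b$ is not a continuous section, and in particular $1/\max(f, \norm{u(\cdot)})$ is not an element of any $C(K)$ that would allow us to write $b$ as a product $u \cdot \mu_{\Ab}(\cdot)$. The role of Archbold and Somerset's theorem is precisely to replace this formal expression by a bona fide element of $M(A)$ realising the same norm lower bound on the set $\set{x \in X_A : f(x) \leq \norm{u(x)}}$.

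Concretely, one encodes the data $(f,u)$ as a suitable family of positive elements of $A$: truncations of $f$ away from $0$, together with functional-calculus applications $h_n(u) \in A$, produce a sequence satisfying the hypotheses of \cite[Theorem~2.5]{arch_som_ideals}. The output is an element $b \in M(A)$ whose fibre norms obey $\norm{b(x)} \geq 1$ on $\set{f \leq \norm{u(\cdot)}}$ and are uniformly bounded by $1$ in $x$. Membership $b \in \Ab$ is then verified by showing $b \in A + H_x$ for every $x \in X_A$: by construction, $b$ is approximable in norm by elements of $A$ in any sufficiently small neighbourhood of $x$, and Lemma~\ref{l:hx} promotes this local approximation to membership of $b$ in $A + H_x$.

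The main obstacle is the invocation of \cite[Theorem~2.5]{arch_som_ideals}. Its hypotheses are delicate, and the non-routine step is arranging the auxiliary positive elements of $A$ (built from $f$ and the continuous functional calculus on $u$) so that the theorem applies and yields an element with the prescribed fibre-norm inequality. The behaviour at points where $f$ is small, or where $\norm{u(\cdot)}$ drops sharply, requires particular care to ensure simultaneously the uniform boundedness of $b$ and the lower bound $\norm{b(x)} \geq 1$ on the target set.
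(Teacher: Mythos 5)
You have correctly identified the key tool --- \cite[Theorem 2.5]{arch_som_ideals} applied in the spirit of \cite[Theorem 3.3]{arch_som_inner} --- and this is exactly the route the paper takes. However, your write-up defers the entire substance of the argument to an ``obstacle'' that you leave unresolved, and in doing so you misremember what the cited theorem provides. There is no need to ``encode the data $(f,u)$ as a suitable family of positive elements'' or to truncate $f$ away from $0$ (it is already assumed that $0 < f \leq 1$ on $X_A$): the theorem of Archbold and Somerset takes as input precisely a strictly positive $u$ of norm one and such an $f$, and outputs an element $b \in M(A)$ together with two usable conclusions. Part (ii) gives $b \in A + H_x$ at every $x$ where $f(x) \neq 0$, i.e.\ at every $x \in X_A$, so $b \in \Ab$ immediately --- there is no need for your proposed detour through local approximation and Lemma~\ref{l:hx}. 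Part (i) gives the explicit fibrewise formula $\tilde{\pi}_x(b) = g_{f(x)}(\tilde{\pi}_x(u))$, where $g_{f(x)}$ is the piecewise-linear cut-off vanishing on $[0, \tfrac{1}{2}f(x)]$ and equal to $1$ on $[f(x),1]$. Your ``moral'' candidate $u(x)/\max(f(x),\norm{u(x)})$ is not what the construction produces and would not obviously lie in $M(A)$ in any case.

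The second gap is that you assert the fibre-norm lower bound without proving it, and you conflate two different norms. The inequality $\norm{b(x)} \geq 1$ on $\set{x : f(x) \leq \norm{u(x)}}$ is obtained in two steps, both missing from your proposal: first, since $\tilde{\pi}_x(u)$ is positive its norm equals its spectral radius, so when $f(x) \leq \norm{\tilde{\pi}_x(u)} = \norm{u(x)}$ the function $g_{f(x)}$ attains the value $1$ on the spectrum of $\tilde{\pi}_x(u)$, whence $\norm{\tilde{\pi}_x(b)} = 1$; second, because $H_x \subseteq \ker(\tilde{\pi}_x)$, the map $\tilde{\pi}_x$ factors through $M(A)/H_x$ and therefore $\norm{b(x)} = \norm{b + H_x} \geq \norm{\tilde{\pi}_x(b)} = 1$. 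Note that $b(x)$ here means $b + H_x$, not $\tilde{\pi}_x(b)$; these can differ (the paper emphasises that $\ker(\tilde{\pi}_x) \supsetneq H_x$ is possible), so the passage between them is a genuine step, not bookkeeping. As it stands your proposal is a correct plan with the two decisive verifications left blank.
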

\begin{proof}
Let $b \in M(A)$ be the element constructed from $u$ and $f$ in~\cite[Theorem 2.5]{arch_som_ideals}.  Then since $f$ is everywhere nonzero in $X_A$, we have $b \in A + H_x$ for all $x \in X_A$ by~\cite[Theorem 2.5 (ii)]{arch_som_ideals}, i.e., $b \in \Ab$.  

For each $x \in X_A$, let $g_{f(x)}:[0,1] \to [0,1]$ be the piecewise linear function with $g_{f(x)}(t)=0$ for $0 \leq t \leq \frac{1}{2} f(x)$, $g_{f(x)}(t)=1$ for $f(x) \leq t \leq 1$, and $g_{f(x)}$ linear on the interval $[\frac{1}{2} f(x), f(x)]$.  Then it is shown in~\cite[Theorem 2.5(i)]{arch_som_ideals} that
\[
\tilde{\pi}_x (b) = g_{f(x)} \left( \tilde{\pi}_x (u) \right)
\]
for all $x \in X_A$, where $\tilde{\pi}_x : M(A) \rightarrow M(A_x)$ is the strictly continuous extension of the quotient homomorphism $\pi_x : A \rightarrow A_x$ to $M(A)$ of~\eqref{e:pitilde}.  Since $u$ is positive, the norm of $\tilde{\pi}_x (u)$ is equal to  its spectral radius.  Hence if $f(x) \leq \norm{u(x)}$ ($=\norm{\tilde{\pi}_x (u)}$),  then $g_{f(x)} \brac{ \norm{ \tilde{\pi}_x (u) }} = 1$, and so the norm of the restriction of $g_{f(x)}$ to the spectrum of $\tilde{\pi}_x (u)$ is equal to $1$. In particular, $\norm{\tilde{\pi}_x (b)} = 1$.

Finally, using the fact that $\ker ( \tilde{\pi}_x ) \subseteq H_x$ for all $x \in X_A$, we see that 
\[\norm{b(x)} \geq \norm{\tilde{\pi}_x (b)} =1 \]
 whenever $f(x) \leq \norm{u(x)}$.

\end{proof}

Let $\A$ be a $\sigma$-unital $C_0(X)$-algebra and fix a strictly positive element $u \in A$ with $\norm{u}=1$.  Since norm functions $x \mapsto \norm{a(x)}$ vanish at infinity on $X$ for all $a \in A$, we may define for each real number $\alpha$ with $0 < \alpha \leq 1$ a compact subset $K(\alpha)$ of $X$ via
\begin{equation}
\label{e:kalpha}
K(\alpha) = \set{ x \in X : \norm{u(x)} \geq \alpha},
\end{equation}
(note that $K(\alpha) \subseteq X_A$).  Let $\set{ c_n }$ be a decreasing sequence, with $0 < c_n \leq 1$ for all $n$ and $c_n$ converging to $0$ as $n \to \infty$.  Then since $u$ is strictly positive, $u(x) \neq 0$ for all $x \in X_A$ and so
\[
\bigcup_{n=1}^\infty K(c_n) = X_A.
\]
For convenience we fix some notation for the remainder of this section; 
\begin{equation}
\label{e:kn}
K_n = K\left(\textstyle \frac{1}{n}\right)= \set{ x \in X : \norm{u(x)} \geq \frac{1}{n} },
\end{equation}
so that $\bigcup_{n=1}^{\infty} K_n = X_A$ as before.

The following Proposition is essentially an extension of the method of~\cite[Theorem 3.3]{arch_som_inner}.
\begin{proposition}
\label{p:reg}
Let $\A$ be a $\sigma$-unital $C_0(X)$-algebra and $u$ a strictly positive element of $A$ with $\norm{u}=1$. Let $K$ be a compactification of $X_A$, $\AbK$ the compactification of $\A$ defined in Theorem~\ref{t:ab}(ii), and $y \in K \backslash X_A$.

 Suppose that there is a (relatively) closed subset $F \subseteq X_A$ and a strictly decreasing sequence $\{ c_n \}$ with $0 < c_n \leq 1$ and $c_n \to 0$, such that
\begin{enumerate}
\item[(a)] $y \in \mathrm{cl}_{K} (F)$ and
\item[(b)] for all $n \in \mathbb{N}$, $F \cap K(c_n) \subseteq \mathrm{int}_{F} (F \cap K(c_{n+1}))$.
\end{enumerate}
Then there is $b \in \Ab$ such that the section $b:K \to \coprod_{y \in K} \Ab_y$ satisfies
\begin{enumerate}
\item[(i)] $\norm{b(x)}=1$ for all $x \in F$, and hence
\item[(ii)] $\norm{b(y)}=1$.
\end{enumerate}
In particular, $\Ab_y \neq \{ 0 \}$.
\end{proposition}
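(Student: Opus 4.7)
The plan is to obtain the desired multiplier $b$ by applying Proposition~\ref{p:mult} to an auxiliary function $f \in C^b(X_A)$ tailored to the set $F$, then to extract (i) and (ii) from the cross-section $K \to \coprod_{z \in K} \Ab_z$ determined by $b$. Specifically, I aim to construct $f$ with $0 < f(x) \leq 1$ on $X_A$ and with the key estimate $f(x) \leq \|u(x)\|$ for every $x \in F$, so that Proposition~\ref{p:mult} immediately furnishes $b \in \Ab$ with $\|b(x)\| \geq 1$ for all $x \in F$.

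Setting $F_n = F \cap K(c_n)$, upper-semicontinuity of $x \mapsto \|u(x)\|$ makes each $F_n$ closed in $F$, and by $\sigma$-unitality $\bigcup_n F_n = F$. Hypothesis (b) provides the crucial insulation $F_n \subseteq \mathrm{int}_F(F_{n+1})$. I would use this nested interior structure to build a sequence of continuous functions $h_n : X_A \to [0,1]$ with $h_n \equiv 1$ on $F_n$ and $h_n$ supported in an open subset of $X_A$ meeting $F$ inside $\mathrm{int}_F(F_{n+1})$, then assemble $f$ as a telescoping combination (roughly, $f = \sum_n (c_n - c_{n+1}) h_n$ plus a small everywhere-positive perturbation to ensure $f(x) > 0$ off $F$). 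The telescoping is designed so that on $F_n \setminus F_{n-1}$ one has $f(x) \leq c_n \leq \|u(x)\|$, which is exactly the bound needed. The series converges uniformly because $c_n \to 0$, giving $f \in C^b(X_A)$ with $0 < f \leq 1$.

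With such an $f$, Proposition~\ref{p:mult} supplies $b \in \Ab$ with $\|b(x)\| \geq 1$ on $F$. The functional-calculus construction from \cite[Theorem 2.5]{arch_som_ideals} gives $\tilde\pi_x(b) = g_{f(x)}(\tilde\pi_x(u))$ with $g_{f(x)}$ taking values in $[0,1]$, so $\|\tilde\pi_x(b)\| \leq 1$; together with Proposition~\ref{p:c0}(ii) applied to the compactification $\AbK$, this forces $\|b\| \leq 1$ and hence $\|b(x)\| = 1$ on $F$, proving (i). For (ii), I would invoke Proposition~\ref{p:c0}(iii): since $y \in K \setminus X_A$,
\[
\|b(y)\| = \inf_W \sup_{x \in W \cap X_A} \|b(x)\|,
\]
with $W$ ranging over neighbourhoods of $y$ in $K$. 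Because $y \in \mathrm{cl}_K(F)$, every such $W$ meets $F$, on which $\|b(x)\| = 1$, so the supremum is at least $1$ for every $W$. Combined with $\|b\| \leq 1$, this yields $\|b(y)\| = 1$, so in particular $\Ab_y \neq \{0\}$.

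The main obstacle is the construction of $f$ in paragraph two, because $X_A$ is only completely regular and need not be normal or even locally compact, so Urysohn-type separation is not automatic. Hypothesis (b) is exactly the replacement for normality that the argument needs: the relative interiors give enough room in $F$ to interpolate continuously between successive levels $F_n$, and extending the interpolating functions from $F$ to $X_A$ can be handled by pulling back through an open neighbourhood of $F_n$ in $X_A$ whose trace on $F$ lies inside $\mathrm{int}_F(F_{n+1})$. The $\sigma$-unitality of $A$ converts what could have been an uncountable construction into the countable telescoping above.
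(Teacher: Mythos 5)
Your proposal follows essentially the same route as the paper's proof: build $f \in C^b(X_A)$ with $0 < f \leq 1$ and $f \leq \norm{u(\cdot)}$ on $F$ by exploiting the nesting $F_n \subseteq \mathrm{int}_F(F_{n+1})$, feed $f$ into Proposition~\ref{p:mult} to get $b \in \Ab$ with $\norm{b(x)} \geq 1$ on $F$, and then deduce (ii) from Proposition~\ref{p:c0}(iii) together with $y \in \mathrm{cl}_K(F)$. That overall architecture, and in particular the last step, is exactly right.

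Three bookkeeping points in the middle, however. First, the telescoping as written is off by one: for $x \in F_n \setminus F_{n-1}$ the surviving terms of $\sum_m (c_m - c_{m+1})h_m(x)$ are those with $m \geq n-1$ (since $h_{n-1}$ need not vanish on $F_n$), so the bound you actually get is $f(x) \leq c_{n-1}$, whereas all you know is $\norm{u(x)} \geq c_n$ and indeed $\norm{u(x)} < c_{n-1}$. Shifting the weights (e.g.\ to $c_{m+1}-c_{m+2}$) repairs this and yields $f(x) \leq c_n \leq \norm{u(x)}$; for what it is worth, the paper's printed proof contains the very same wrinkle, concluding $c_k < \norm{u(x)}$ from $x \notin K(c_k)$ when that hypothesis gives the reverse inequality, so the index shift is genuinely needed in both versions. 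Second, the ``small everywhere-positive perturbation'' cannot literally be everywhere positive: Proposition~\ref{p:mult} only requires $f > 0$ globally but $f \leq \norm{u(\cdot)}$ at the points where you want $\norm{b(x)} \geq 1$, and adding a strictly positive function on all of $X_A$ can destroy that inequality on $F$ where the telescoped sum may already attain $\norm{u(x)}$. The perturbation should vanish on $F$ and be positive on the zero set of the telescoped sum (the paper separates $Z(\overline f)$ from $F$ by a Urysohn function $k$ and takes $\min(\overline f + k,1)$); note this separation is available because $X_A$ is $\sigma$-compact, hence Lindel\"of and normal, while your direct construction of the $h_n$ only needs complete regularity since each $F_n = F \cap K(c_n)$ is compact. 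Third, your derivation of $\norm{b} \leq 1$ is inverted: $H_x \subseteq \ker\tilde\pi_x$, so $\norm{\tilde\pi_x(b)} \leq \norm{b(x)}$ and bounding $\norm{\tilde\pi_x(b)}$ does not bound $\norm{b(x)}$ via Proposition~\ref{p:c0}(ii); the contractivity of $b$ has to be read off from the construction in \cite[Theorem 2.5]{arch_som_ideals} itself (only the lower bound matters for $\Ab_y \neq \set{0}$). None of these affects the strategy, which coincides with the paper's.
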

\begin{proof}
For each $n \in \mathbb{N}$ let $F_n  = F \cap K(c_n)$ (so that $F = \bigcup_{n=1}^{\infty} F_n$) and let $f_n : F \rightarrow [0,1]$ be a continuous function with $\restr{f_n}{F_n} \equiv 1$ and $\restr{f_n}{F \backslash \mathrm{Int}_F (F_{n+1})} \equiv 0$ (note that the assumption of (b) implies that the sets $F_n$ and $F\backslash \mathrm{Int} (F_{n+1})$ are disjoint and closed. Let $f = \sum_{n=1}^{\infty} 2^{-n} (c_n f_n)$, so that $f$ is continuous on $F$, $0 \leq f \leq 1$ and $f(x)>0$ for all $ x \in F$.

We claim that $f(x) \leq \norm{u(x)}$ for all $ x \in F$.  Indeed, any $x \in F$ belongs to $F_{k+1} \backslash F_k$ for some $k$, and hence $f_n (x) = 0 $ for $1 \leq n \leq k-1$.  It follows that
\begin{align*}
f(x) & = \sum_{n=k}^{\infty} 2^{-n} (c_n) f_n (x) \\
& \leq c_k\ \sum_{n=k}^{\infty} 2^{-n} f_n (x) \\
& \leq c_k 2^{-k+1} \leq c_k < \norm{u(x)},
\end{align*}
the final inequality holding since $x \not\in K(c_k)$ by assumption.

Now, since $X_A$ is normal and $F$ is closed, $f$ has a continuous extension to  $\overline{f} : X_A \rightarrow [0,1]$.  Suppose that $Z(\overline{f})$ is non-empty, then since $Z( \overline{f} )$ and $F$ are disjoint closed subsets of $X_A$, there is  a continuous function $k: X_A \rightarrow [0,1]$ such that $\restr{k}{ Z( \overline{f} ) } \equiv 1 $ and $\restr{k}{F} \equiv 0$.  If $Z( \overline{f} )= \emptyset$, then set $k=0$.

Finally, let $g = \min (\overline{f}+k , 1)$, so that $g:X_A \rightarrow [0,1]$, $g$ is continuous, $g(x)>0 $ for all $x \in X_A$ and $\restr{g}{F} = f$.

Using Proposition~\ref{p:mult} applied to $g$, we get $b \in \Ab$ with $\norm{b(x)}=1$ for all $x \in F$. Since $y \in \mathrm{cl}_{K} (F)$, this implies that for all neighbourhoods $W$ of $y$ in $K$ there is $x \in W \cap X_A$ with $\norm{b(x)}=1$. Moreover, as
\[
\norm{b(y)} = \inf_{W} \sup_{x \in W \cap X_A} \norm{b(x)},
\]
(where $W$ ranges over all neighbourhoods of $y$ in $K$) by Proposition~\ref{p:c0}(iii), it follows that $\norm{b(y)}=1$.  In particular, $\Ab_y \neq \{ 0 \}$.

\end{proof}

Proposition~\ref{p:reg} will be our main technique for constructing points $y \in K \backslash X_A$ for which $\Ab_y$ is nonzero.  In Theorem~\ref{t:nonzero} we shall show that in certain cases (such as that of continuous $C_0(X)$-algebras), we may take $F = X_A$, so that $\Ab_y \neq \{ 0 \}$ for all such $y$.  One of these cases arises when the base space $X$ is the so-called \emph{Glimm space} of $A$, a particular space constructed from $\Pm (A)$.

For a \Cst -algebra $A$, define an equivalence relation $\approx$ on $\Pm (A)$ as follows: for $P,Q \in \Pm (A)$, $P \approx Q$ if and only if $f(P) = f(Q)$ for all $f \in C^b ( \Pm (A) )$.  As a set, we define $\Gl (A)$ as the quotient space $\Pm (A) / \approx$, and we denote by $\rho_A : \Pm (A) \rightarrow \Gl (A)$ the quotient map.  For $f \in C^b ( \Pm (A)$, define $f^{\rho}$ on $\Gl (A)$ via $f^{\rho} ([P]) = f(p)$, where $[P]$ is the $\approx$ equivalence class of $P$ in $\Pm (A)$ (note that $f^{\rho}$ is well-defined by construction).  We then equip $\Gl (A)$ with the topology $\tau_{cr}$ induced by the functions $\set{ f^{\rho} : f \in C^b ( \Pm (A))}$.  With this topology, $\Gl (A)$ is a completely regular (Hausdorff) space (the \emph{complete regularisation} of $\Pm (A)$).  For more details of this construction, we refer the reader to~\cite[Chapter 3]{gill_jer},~\cite{arch_som_qs},~\cite{m_glimm} and~\cite[Chapter 2]{thesis}.

It is clear that if $\Gl (A)$ is locally compact, the continuous map $\rho_A : \Pm (A) \rightarrow \Gl (A)$ gives rise to a $C_0 ( \Gl (A) )$-algebra $\AGl$.  In general however, $\Gl (A)$ may fail to be locally compact, e.g.~\cite{dauns_hofmann}.  Nonetheless, if $\rho_A$ is regarded as a map $\Gl (A) \to \beta \Gl (A)$, we get a $C( \beta \Gl (A)$-algebra $\AbGl$.

The space $\Gl (A)$ is in some ways more tractable as a base space over which to represent a given \Cst-algebra $A$ as a $C_0(X)$-algebra.  Not every completely regular space $Y$ arises as $\Gl (A)$ for some \Cst -algebra $A$, indeed, Lazar and Somerset have recently given a complete characterisation (for separable $A$) of those spaces $Y$ that do~\cite{lazar_somerset}.  By contrast, every completely regular $\sigma$-compact space $Y$ arises as $X_A$ for some $C_0(X)$-algebra $\A$~\cite[Section 2]{arch_som_ideals}.

\begin{lemma}
\label{l:int}
Let $A$ be a $\sigma$-unital \Cst -algebra such that $\Gl (A)$ is locally compact.  Let $u \in A$ be a strictly positive element of norm 1. Then there is an increasing sequence $\{ n_j \}$ such that 
\begin{enumerate}
\item[(i)] $\displaystyle \bigcup_{j=1}^{\infty} K_{n_j} = \Gl (A)$, and
\item[(ii)] For each $j$ we have
\[
K_{n_j} \subset \mathrm{Int} K_{n_{j + 1} }.
\]
\end{enumerate}
\begin{proof}
Note that since $u$ is strictly positive it is evident that $\bigcup_{n=1}^{\infty} K_n = \Gl (A)$.  Moreover, since $\Gl (A)$ is locally compact, each $x \in \Gl (A)$ has an open neighbourhood $U_x$ in $\Gl (A)$ such that $\overline{U_x}$ is compact.  

By~\cite[Theorem 2.1]{lazar_glimm}, for each compact $K \subseteq \Gl (A)$ there is some $\alpha > 0$ such that
\[
K \subseteq \{ y \in \Gl (A) : \norm{u(y)} \geq \alpha \}.
\]
In particular, for each $x \in \Gl (A)$ there is $m_x \in \mathbb{N}$ such that $\overline{U}_x \subseteq K_{m_x}$.

We define the sequence $ \{ n_j \}$ inductively.  Let $n_1 = 1$. For $j \geq 1$, note that the collection $\{ U_x : x \in  K_{n_j+1} \}$ is an open cover of $ K_{n_j + 1}$, so by compactness there are $x_1, x_2 , \ldots , x_r \in K_{n_j+1}$ such that
\[
 K_{n_j+1} \subseteq \bigcup_{i=1}^r U_{x_i}.
\]
By the previous paragraph, there are $m_{x_i} \in \mathbb{N}$ with $\overline{U_{x_i}}  \subseteq K_{m_{x_i}}$ for $1 \leq i \leq r$.  Set $n_{j+1} = \max \{ n_{x_i} : 1 \leq i \leq r \}$.  Then
\[
 K_{n_j+1} \subseteq \bigcup_{i=1}^r U_{x_i} \subseteq \bigcup_{i=1}^r \overline{U}_{x_i} \subseteq K_{n_{j+1}}.
\]
Properties (i) and (ii) are then immediate.
\end{proof}
\end{lemma}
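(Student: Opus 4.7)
The plan is to build $\{n_j\}$ by induction, at each step using local compactness of $\Gl(A)$ together with a key external result guaranteeing that every compact subset of $\Gl(A)$ sits inside some $K_m$. That containment ingredient is the nontrivial input: I would invoke \cite[Theorem 2.1]{lazar_glimm}, which gives that for every compact $K \subseteq \Gl(A)$ there exists $\alpha > 0$ with $K \subseteq \{y : \norm{u(y)} \geq \alpha\}$, i.e.\ $K \subseteq K_m$ for $m$ large enough.

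Before starting the induction I would record two easy facts. First, (i) will be automatic as soon as $n_j \to \infty$: strict positivity of $u$ forces $u(y) \neq 0$ in every fibre, so $\bigcup_n K_n = \Gl(A)$. Second, each $K_n$ is compact in $\Gl(A)$, since the norm function $y \mapsto \norm{u(y)}$ is upper-semicontinuous and vanishes at infinity on $\Gl(A) = X_A$, hence $K_n$ is closed and relatively compact. So the real task is to arrange the nested interior condition (ii).

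For the inductive step, set $n_1 = 1$. Given $n_j$, consider the compact set $K_{n_j + 1}$. For each of its points $x$, local compactness of $\Gl(A)$ provides an open neighbourhood $U_x$ with compact closure; by compactness, finitely many $U_{x_1}, \dots, U_{x_r}$ suffice to cover $K_{n_j + 1}$. Applying the Lazar containment result to each compact $\overline{U_{x_i}}$ yields $m_{x_i} \in \mathbb{N}$ with $\overline{U_{x_i}} \subseteq K_{m_{x_i}}$, and we set $n_{j+1} = \max_{i} m_{x_i}$. Then
\[
K_{n_j + 1} \subseteq \bigcup_{i=1}^r U_{x_i} \subseteq \bigcup_{i=1}^r \overline{U_{x_i}} \subseteq K_{n_{j+1}},
\]
and since the middle set is open, $K_{n_j + 1} \subseteq \mathrm{Int}\, K_{n_{j+1}}$. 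Using the nesting $K_n \subseteq K_{n+1}$, this gives $K_{n_j} \subseteq \mathrm{Int}\, K_{n_{j+1}}$, which is (ii). By construction $n_{j+1} \geq n_j + 1$, so $n_j \to \infty$ and (i) follows.

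The main obstacle, as hinted above, is the Lazar-type fact that compact subsets of $\Gl(A)$ are captured by some sub-level set of $\norm{u(\cdot)}$; a priori one could fear compact sets accumulating at the ``zero locus'' of $u$ in the Glimm space. Once this has been cited, what remains is the clean exhaustion-with-compact-neighbourhoods argument above.
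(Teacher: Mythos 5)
Your proof is correct and follows essentially the same route as the paper's: the same appeal to Lazar's result that compact subsets of $\Gl(A)$ lie in some $K_m$, combined with the same exhaustion-by-compact-neighbourhoods induction. (Your added remarks on why (i) follows and on the openness of $\bigcup_i U_{x_i}$ merely make explicit what the paper leaves as ``immediate''.)
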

When $X_A$ is not $\Gl (A)$, the conclusion of Lemma~\ref{l:int} can fail, even when $X_A$ is compact and $A$ commutative, as Example~\ref{e:nokn} shows.  The reason that this situation does not arise in the case where $X_A = \Gl (A)$ is the result of Lazar~\cite{lazar_quot}, which shows that the usual topology on $\Gl(A)$ is precisely the quotient topology induced by the canonical surjection $\Pm (A) \rightarrow \Gl(A)$ when $A$ is $\sigma$-unital.

\begin{example}
\label{e:nokn}
Let $A=c_0 = C_0 ( \mathbb{N})$, $X = \set{0} \cup \set{\frac{1}{n} : \in \Nat }$ with the subspace topology from $\mathbb{R}$. Let $\phi_A : \Nat \rightarrow X$ the continuous surjection defined by $\phi(1)=0$ and $\phi(n)=\frac{1}{n-1}$ for $n \geq 2$.  Then we get a $C(X)$-algebra $\A$ with base map $\phi_A$ such that $X_A = X$. To avoid ambiguity we will write elements $a \in A$ as sequences $\set{ a_n }$.

Let $u \in A$ be the strictly positive element $u_n = \frac{1}{n}$.  Then $u(\frac{1}{n}) = u_{n+1} = \frac{1}{n+1}$ for $n \in \Nat$, and $u(0) = u_1=1$.  It follows that $K_1 = \set{0}$ and 
\[
K_n = \set{0} \cup \set{\frac{1}{m} : 1 \leq m \leq n-1 } 
\]
for $n \geq 2$.  Note that the point $0$ is not an interior point of any set $K_n$.
\end{example}

\begin{theorem}
\label{t:nonzero}
Let $\A$ be a $C_0(X)$-algebra, and suppose that one of the following conditions hold:
\begin{enumerate}
\item[(i)] there is a strictly positive element $u \in A$ with $x \mapsto \norm{u(x)}$ continuous on $X_A$ (e.g. if $\A$ is a $\sigma$-unital, continuous $C_0(X)$-algebra),
\item[(ii)] for all $x \in X_A$ we have
\[
\mu_A \left( C_0(X) \right) \cap A \not\subseteq J_x,
\]
\item[(iii)] $\A = \AGl$, where $\Gl (A)$ is locally compact and $A$ $\sigma$-unital.
\end{enumerate}
Then there is $b \in \Ab$ with $\norm{b(x)}=1$ for all $x \in X_A$. Hence if $K$ is any compactification of $X_A$, the fibre algebras $\Ab_y$ of the compactification $\AbK$ of $\A$ are nonzero  for all $y \in K$. 
\end{theorem}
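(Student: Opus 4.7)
The plan is to dispatch case (ii) directly from earlier results and to treat cases (i) and (iii) uniformly by verifying the hypotheses of Proposition~\ref{p:reg} with $F=X_A$.

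For case (ii), Proposition~\ref{p:abiff}(i) gives $\Ab=M(A)$, so $b:=1_{M(A)}$ lies in $\Ab$. For each $x\in X_A$ the fibre $M(A)_x=M(A)/H_x$ is non-zero since $\tilde{\pi}_x$ factors through it onto the non-zero algebra $A_x$, hence $\|b(x)\|=\|1+H_x\|=1$.

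For cases (i) and (iii), I will show that $F:=X_A$ together with a suitably chosen sequence $\{c_n\}$ satisfies assumptions (a) and (b) of Proposition~\ref{p:reg}. Assumption (a) is automatic: since $K$ is a compactification of $X_A$, every $y\in K$ belongs to $\mathrm{cl}_K(X_A)=\mathrm{cl}_K(F)$. For (b) the task is to produce a strictly decreasing null sequence $\{c_n\}\subseteq(0,1]$ such that $K(c_n)\subseteq\mathrm{Int}_{X_A}(K(c_{n+1}))$. In case (i), strict positivity of $u$ forces $A$ to be $\sigma$-unital and $\bigcup_nK_n=X_A$; continuity of $x\mapsto\|u(x)\|$ makes each $K_n$ closed in $X_A$ and gives the chain
\[
K_n\subseteq\{x\in X_A:\|u(x)\|>\tfrac{1}{n+1}\}\subseteq K_{n+1},
\]
whose middle set is open, so $c_n=1/n$ works. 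In case (iii), $X_A=\Gl(A)$ is locally compact and $A$ is $\sigma$-unital, so Lemma~\ref{l:int} supplies an increasing sequence $\{n_j\}$ with $K_{n_j}\subset\mathrm{Int}(K_{n_{j+1}})$ and $\bigcup_jK_{n_j}=X_A$; the sequence $c_j=1/n_j$ then satisfies (b).

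Applying Proposition~\ref{p:reg} in each case yields $b\in\Ab$ with $\|b(x)\|=1$ for all $x\in X_A$, which is the first assertion. For an arbitrary compactification $K$ of $X_A$ and any $y\in K\setminus X_A$, Proposition~\ref{p:c0}(iii) gives
\[
\|b(y)\|=\inf_W\sup_{x\in W\cap X_A}\|b(x)\|=1,
\]
as $W$ runs over neighbourhoods of $y$; hence $\Ab_y\neq\{0\}$. For $y\in X_A$ the fibre equals $A_y\neq 0$ by construction of the compactification (Theorem~\ref{t:ab}(ii)), completing the proof.

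The main obstacle is verifying the nesting condition (b) of Proposition~\ref{p:reg}: for a general strictly positive element the sets $K_n$ need not be interior-nested (Example~\ref{e:nokn}), so one must either exploit continuity of the norm function (case (i)) or invoke the deeper fact that on a locally compact Glimm space the sets $K_n$ can be refined into a compactly-nested cover via Lemma~\ref{l:int} (case (iii)).
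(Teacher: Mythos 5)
Your proof is correct and follows essentially the same route as the paper: case (ii) via Proposition~\ref{p:abiff}(i) with $b=1_{M(A)}$, and cases (i) and (iii) by verifying the nesting hypothesis of Proposition~\ref{p:reg} with $F=X_A$, using the open sets $\set{x:\norm{u(x)}>\tfrac{1}{n+1}}$ in case (i) and Lemma~\ref{l:int} in case (iii). The only (immaterial) divergence is in case (ii), where the paper deduces $\Ab_y\neq\set{0}$ for all $y\in K$ from compactness of $\Pm(M(A))$ and density of $X_A$, whereas you compute $\norm{1+H_x}=1$ on $X_A$ directly and then invoke Proposition~\ref{p:c0}(iii) as in the other cases.
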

\begin{proof}
Suppose that $A$ satisfies (i), then we may assume w.l.o.g. that $\norm{u}=1$.  For each $n \in \mathbb{N}$, let $K_n$ be the subset of $X_A$ defined by the norm function of $u$ as in~\eqref{e:kn}.  Since $x \mapsto \norm{u(x)}$ is continuous on $X$, for all $n \in \mathbb{N}$ the set 
\[
O_{n+1}:=\set{ x \in X_A: \norm{u(x)} > \frac{1}{n+1} }
\]
is open, and  by definition we have $K_n \subseteq O_{n+1} \subseteq K_{n+1}$.  Moreover, since $u$ is strictly positive, $X_A = \bigcup_{n=1}^{\infty} K_n$.  Hence we may apply Proposition~\ref{p:reg} with $F=X_A$ and $c_n = \frac{1}{n}$ for all $n$.

If (ii) holds, then by Proposition~\ref{p:abiff}(i), we have $\Ab = M(A)$.  In particular, this implies that $\Pm (\Ab)$ is compact, and hence its continuous image $K_{\Ab}$ is again compact. Since $X_A$ is dense in $K$, it follows that $K_{\Ab} = K$, i.e.  $\Ab_y \neq \{ 0 \}$ for all $y \in K$, the conclusion follows.  Note that we may take $b=1_{M(A)} \in \Ab$ in this case.

In case (iii), again let $u $ be a strictly positive element of $A$ with $\norm{u}=1$.  Let $\{n_j \}_{j \geq 1}$ be the sequence constructed in Lemma~\ref{l:int}, so that again, $K_{n_j} \subseteq \mathrm{Int}_{X_A} (K_{n_{j+1}})$ for all $j \geq 1$ and $X_A = \bigcup_{j=1}^{\infty} K_{n_j}$.  Deleting repetitions where necessary, we get a strictly decreasing sequence $c_n$, $0<c_n \leq 1$, converging to $0$ such that $K(c_n) \subseteq \mathrm{Int} K(c_{n+1})$ for all $n$ and $X_A = \bigcup_{n=1}^{\infty} K(c_n)$.  Applying Proposition~\ref{p:reg} with $F=X_A$ yields the required result.

\end{proof}

Example~\ref{e:notremote} gives a $C(X)$-algebra $\A$ (with $X$ compact and $X_A$ dense in $X$) which fails to satisfy any of the conditions (i), (ii) or (iii) of Theorem~\ref{t:nonzero}, yet for which $\Ab_y \neq \{ 0 \}$ for all $y \in X \backslash X_A$.

We remark also that Example~\ref{e:notcts} (which is a small modification of Example~\ref{e:notremote}), shows that condition (i) of Theorem~\ref{t:nonzero} is strictly weaker than continuity.

\begin{example}
\label{e:notremote}
Let $A=C_0 ( \Nat , K(H) )$ and identify $\Pm (A)$ with $\Nat$ in the usual way. Let $X=[0,1]$ and let $\phi_A : \mathbb{N} \rightarrow \mathbb{Q} \cap (0,1)$ be a bijection.  Then $\A$ defines a $C(X)$-algebra with base map $\phi_A$.  We shall show that the fibre algebras $\Ab_y$ of the compactification $(\Ab,X,\mu_{\Ab}^X)$ of $\A$ are nonzero for all $y \in X$.

In this case, $\Gl (A) = \Pm (A)$ and so $X_A = \mathbb{Q} \cap (0,1)$ is not homeomorphic to $\Gl(A)$, so that condition (iii) of Theorem~\ref{t:nonzero} does not apply.  Since $Z(A) = \{ 0 \}$, we have 
\[ \mu_A \left( C_0 (X) \right)  \cap A \subseteq ZM(A) \cap A = Z(A) = \{ 0 \} \subseteq J_x \]
for all $x \in X_A$, so that (ii) does not apply either.

If $a \in A$ is any nonzero element, then $x \mapsto \norm{a(x)}$ is discontinuous on $X_A$.  Indeed, suppose that $\norm{a(y)} \neq 0 $ for some $y \in X_A$ and that $x \mapsto \norm{a(x)}$ were continuous at $y$.  Then the set $\{ x \in X_A : \norm{a(x)} > \frac{1}{2} \norm{a(y)} \}$ would be open in $X_A$ and contained in the compact subset $\{ x \in X_A : \norm{a(x)} \geq \frac{1}{2} \norm{a(y)} \}$.  The latter would then be a compact subset of $\mathbb{Q}$ with non-empty interior, which is a contradiction.  In particular, condition (i) cannot apply.

Nonetheless, let $y \in X \backslash X_A $, and let $q_n$ be a sequence of distinct rationals in $(0,1)$ converging to $y$. Then $F:=\{ q_n: n \in \mathbb{N} \}$ is (relatively) closed and discrete in $X_A$.  Since $K_n$ is compact for each $n$, each $K_n \cap F$ is finite, and so we may apply Proposition~\ref{p:reg} to $F$ to conclude that $\Ab_y \neq \{ 0 \}$.

\end{example}

\begin{example}
\label{e:notcts}
Let $\A$ be as in Example~\ref{e:notremote} and let $B$ be the \Cst-subalgebra $A+\mu_A\left( C_0 (0,1) \right)$ of $M(A)$.  Note that $M(B)=M(A)$ and that $(B,X,\mu_B)$ is a $C(X)$-algebra with $\mu_B = \mu_A$. Let $\set{f_n} \in C_0 \left( (0,1) \right)$ be an increasing approximate identity for $C_0 (0,1)$ with $\norm{f_n}=1$ for all $n$.  Then $\mu_A(f_n)$ is an approximate identity for $B$ and so $f:=2^{-n} f_n$ is a strictly positive element of $B$. Clearly $y \mapsto \norm{f(y)}$ is continuous on $X = X_B$, while as in Example~\ref{e:notremote}, no element $a$ of the subalgebra $A$ of $B$ has $y \mapsto \norm{a(y)}$ continuous on $X$.
\end{example}

In fact, the techniques used in Example~\ref{e:notremote} may be used to obtain a much stronger result, as Theorem~\ref{t:notremote} shows.

\begin{theorem}
\label{t:notremote}
Let $\A$ be a $\sigma$-unital $C_0(X)$-algebra, $K$ a compactification of $X_A$ and $\AbK$ the compactification of $\A$ given in Theorem~\ref{t:ab}(ii).  Denoting by $\set{ \Ab_y : y \in K }$ the fibres of $\AbK$, then the set
\[
\left\{ y \in K \backslash X_A : \Ab_y \neq \{ 0 \} \right\}
\]
is dense in $K \backslash X_A$.  
\end{theorem}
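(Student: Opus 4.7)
The plan is to localise around $y_0$, construct a closed discrete subset $F$ of $X_A$ lying in that localised region, and apply Proposition~\ref{p:reg} so that any cluster point of $F$ in $K \setminus X_A$ will automatically have a non-zero fibre.  Fix $y_0 \in K \setminus X_A$ and an open neighbourhood $V$ of $y_0$ in $K$; by regularity of the compact Hausdorff space $K$, choose an open $V''$ with $y_0 \in V''$ and $V' := \overline{V''}^{K} \subseteq V$.  Let $u \in A$ be a strictly positive element with $\norm{u}=1$, and set $K_n = \set{x \in X_A : \norm{u(x)} \geq 1/n}$ as in~\eqref{e:kn}.

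The crucial features of the subspace $V' \cap X_A$ of $X_A$ are: it is closed in $X_A$ (as $V'$ is closed in $K$); it is $\sigma$-compact via the increasing union $V' \cap X_A = \bigcup_{n}(V' \cap K_n)$ of compact sets; and it is not compact, for otherwise it would be closed in $K$ and so coincide with its own dense $K$-closure $V'$, contradicting $y_0 \in V' \setminus X_A$.  Now $X_A$ itself is $\sigma$-compact and regular (as a subspace of the locally compact Hausdorff space $X$), hence Lindel\"of, hence normal, and therefore its closed subspace $V' \cap X_A$ is likewise Lindel\"of, normal and non-compact.  A Lindel\"of space that is not compact cannot be countably compact, so $V' \cap X_A$ contains an infinite closed discrete subset $F'$; since $F' \cap K_n = F' \cap (V' \cap K_n)$ is a closed discrete subset of a compact space, each of these intersections is finite.

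Because $F'$ is infinite and $F' \cap K_n$ is finite for every $n$, I can inductively choose distinct points $x_n \in F' \setminus K_n$, obtaining a set $F = \set{x_n : n \geq 1} \subseteq F'$ with $\norm{u(x_n)} < 1/n$; as a subset of the closed discrete set $F'$, $F$ is again closed and discrete in $X_A$.  Condition (b) of Proposition~\ref{p:reg} for $c_n = 1/n$ is then automatic, because every subset of the discrete space $F$ is open in $F$.  The $K$-closure $\overline{F}^{K}$ is compact and contained in $V' \subseteq V$; since an infinite discrete set is not compact in the Hausdorff space $K$ one has $\overline{F}^{K} \supsetneq F$, while closedness of $F$ in $X_A$ gives $\overline{F}^{K} \cap X_A = F$, so any $y \in \overline{F}^{K} \setminus F$ automatically lies in $V \cap (K \setminus X_A)$.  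Applying Proposition~\ref{p:reg} to $F$, $\{c_n\}$ and such a $y$ produces $b \in \Ab$ with $\norm{b(x)} = 1$ for all $x \in F$, and Proposition~\ref{p:c0}(iii) then yields $\norm{b(y)} \geq 1$, so $\Ab_y \neq \set{0}$.

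The main point requiring care is the passage from $V' \cap X_A$ being a non-compact Lindel\"of space to the existence of the infinite closed discrete subset $F'$; once this is in hand, the reindexing on which $\norm{u}$ is small, the identification $\overline{F}^{K} \setminus F \subseteq K \setminus X_A$, and the appeal to the discrete case of Proposition~\ref{p:reg} are essentially bookkeeping.
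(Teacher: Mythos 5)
Your argument is correct, and its overall skeleton is the same as the paper's: localise near the given point of $K \backslash X_A$, produce an infinite, relatively closed, discrete subset $F$ of $X_A$ inside that neighbourhood, observe that each $F \cap K_n$ is finite so that hypothesis (b) of Proposition~\ref{p:reg} holds trivially, and then apply Proposition~\ref{p:reg} to a point of $\mathrm{cl}_K(F) \backslash F$. The one step where you genuinely diverge is the production of $F$. The paper notes that $W \cap X_A$ is $\sigma$-compact and non-compact, hence not pseudocompact, and then invokes~\cite[Corollary 1.21]{gill_jer} to obtain a countably infinite discrete subset that is C-embedded; the C-embedding is then needed in a separate little argument to show that this subset is closed in $X_A$. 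You instead use that $V' \cap X_A$ is Lindel\"of (being $\sigma$-compact) and non-compact, hence not countably compact, so that by the standard characterisation of countable compactness in $T_1$ spaces it contains an infinite subset with no accumulation point --- which is automatically closed and discrete, and remains so after passing to any subset. This is more elementary (it bypasses pseudocompactness and C-embedding entirely and gets closedness for free), at the cost of nothing; the normality you mention is not actually needed. Your extra reindexing to arrange $\norm{u(x_n)} < 1/n$ is harmless but superfluous, since discreteness of $F$ already makes condition (b) of Proposition~\ref{p:reg} automatic for $c_n = 1/n$, exactly as in the paper.
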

\begin{proof}
Let $ w \in K \backslash X_A$ and let $W$ be a compact neighbourhood of $w$ in $K$.  We shall show that there is a point $y \in W \cap \brac{ K \backslash X_A }$ such that $\Ab_y \neq 0$. As $w \in  \mathrm{cl}_{K}(X_A)$, $W \cap X_A \neq \emptyset$ and has the following properties:
\begin{enumerate}
\item[(i)] $W \cap X_A$ is closed, and hence $\sigma$-compact,
\item[(ii)] Since $w \in \mathrm{Int}_{K} (W) \cap \mathrm{cl}_{K} (X_A)$, it follows that $w \in \mathrm{cl}_{K} (W \cap X_A)$. Hence $W \cap X_A$ is not closed in $K$ and in particular,  is non-compact.
\item[(iii)] Since a $\sigma$-compact, completely regular space is pseudocompact if and only if it is compact~\cite[8.2 and 8A]{gill_jer}, we conclude from (i) and (ii) that $W \cap X_A$ is not pseudocompact. 
\end{enumerate}

By~\cite[Corollary 1.21]{gill_jer}, $W \cap X_A$ contains a countably infinite, discrete subset $F = \{ x_n : n \geq 1 \}$ which is C-embedded in $W \cap X_A$.  Suppose $x \in W \cap X_A$ lies in the closure of $F$ in $W \cap X_A$, and let $f:F \rightarrow \mathbb{R}$ be given by $f(x_n)=n$. Since $F$ is C-embedded in $W \cap X_A$, $f$ has a continuous extension to $\overline{f} \in C(W \cap X_A)$.  Thus if $x$ were not in $F$, every neighbourhood of $x$ in $W \cap X_A$ would contain a subset on which $f$ were unbounded, contradicting the fact that $\overline{f}$ is continuous, real-valued and extends $f$.  Hence $x \in F$ and so $F$ is closed in $W \cap X_A$, and moreover, closed in $X_A$.

Since $F \subseteq W \cap X_A$ and $F$ is non-compact, we have
\[
\mathrm{cl}_{K} (F) = \mathrm{cl}_W (F) \supsetneq F,
\]
and hence $\mathrm{cl}_{W}(F) \backslash F$ is nonempty and contained in $W \backslash X_A$.  Let $y \in \mathrm{cl}_{W}(F) \backslash F$.

  Note that for each $n \in \mathbb{N}$, $F \cap K_n$ must be finite (since $F$ is closed and discrete and $K_n$ is compact).  In particular,  for all $n$ we have  $F \cap K_n \subseteq \mathrm{int}_{F} (F \cap K_{n+1})$.  By Proposition~\ref{p:reg}, $ \Ab_y \neq \{ 0 \}$.

\end{proof}

The proof of Theorem~\ref{t:notremote} shows that $\Ab_y \neq \{ 0 \}$ whenever $y \in K \backslash X_A$ lies in the closure in $K$ of a relatively closed, discrete subset of $X_A$ (countability is in fact ensured by $\sigma$-compactness of $X_A$).  In general, however, there do exist points of $K \backslash X_A$  that do not lie in the closure of any subset of this form.  In the context of Stone-\v{C}ech remainders such points are called \emph{far points}, and are a particular case of \emph{remote points}.  

There exist remote points of $\beta \mathbb{R}$ and $\beta \mathbb{Q}$~\cite{van_douwen}. In the case of $\beta \mathbb{R}$, this was originally shown (assuming the continuum hypothesis) by Fine and Gillman in~\cite{fine_gilmann}, and later (without this assumption) by van Douwen~\cite{van_douwen}. 

Example~\ref{e:remote1}  exhibits a $C_0(X)$-algebra $\A$ with $\Ab_y= \set{ 0 }$ for every remote point $y \in \beta X_A \backslash X_A$.

\begin{example}
\label{e:remote1}
Let $A=C_0 ( \Nat , K(H) )$ and identify $\Pm (A)$ with $\Nat$ in the usual way. Let $X=\beta \mathbb{Q}$ and let $\phi_A : \mathbb{N} \rightarrow \mathbb{Q}$ be a bijection.  Then $\A$ defines a $C(X)$-algebra with base map $\phi_A$ and $X_A = \mathbb{Q}$.

We claim that for any $b \in \Ab$ and $\eps > 0$, the set
\[
\set{q \in \mathbb{Q} : \norm{b(x)} \geq \eps }
\]
is discrete.  Writing $a=(a_n)$ for an element of $A$ (i.e. a sequence of elements of $K(H)$), the set 
\[
\set{ n \in \mathbb{N} : \norm{a_n} \geq \eps }
\]
is finite, and hence its image 
\[ K(a,\eps):= \set{ q \in \mathbb{Q} : \norm{a(q)} \geq \eps } \subset \mathbb{Q} \]
is again finite.

Now, if $b \in \gb (A)$ with and $q \in \mathbb{Q}$, there is $a \in A$ and a neighbourhood $U$ of $q$ in $\mathbb{Q}$ such that $\norm{b(x)-a(x)} < \frac{1}{2} \eps$ for all $x \in U$.  Moreover, since $K(a,\eps)$ is finite, there is a neighbourhood $V \subseteq U$ of $q$ with $\norm{a(x)} < \frac{1}{2} \eps $ for all $x \in V \backslash \{ q \}$.

In particular, for all $x \in V \backslash \set{q}$, we have
\begin{align*}
\norm{b(x)} = \norm{b(x)-a(x)+a(x)} & \leq \norm{b(x)-a(x)}+\norm{a(x)} \\
& < \frac{\eps}{2} + \frac{\eps}{2} = \eps.
\end{align*}

Since by Theorem~\ref{t:ext}, $\Ab$ is isomorphic to $\gb (A)$, it follows that the set 
\[
\set{ q \in \mathbb{Q} : \norm{b(q)} \geq \eps }
\]
is discrete for all $\eps > 0$.

Now suppose that $y \in \beta \mathbb{Q} \backslash \mathbb{Q}$ is a remote point, and $b \in \Ab$.  By Proposition~\ref{p:c0}(iii), we have
\[
\norm{b(y)} = \inf_W \sup_{x \in W \cap \mathbb{Q} } \norm{b(x)},
\]
as $W$ ranges over all neighbourhoods of $y$ in $\beta \mathbb{Q}$.  Since $y$ is remote, for all $\eps > 0$ there is a neighbourhood $W$ of $y$ such that $W$ is disjoint from the countable, discrete set $\set{q \in \mathbb{Q} : \norm{b(x)} \geq \eps }$.  Hence $\sup_{x \in W \cap \mathbb{Q}} \norm{b(x)} < \eps$, and so $\norm{b(y)}=0$ for all $b \in \Ab$.

Note that any set closed subset $F \subseteq X_A$ satisfying the hypothesis of Proposition~\ref{p:reg} (for any choice of decreasing sequence $\{c_n \}$) must be discrete ( $F$ is necessarily countable since $X_A$ is). 

Indeed, for any such $F$ we have $F_n : = F \cap K(c_n)$ finite.  Since $F_n \subseteq \mathrm{Int}_F (F_{n+1})$, it is clear that each element of $F_n$ has a neighbourhood disjoint from $F \backslash F_{n+1}$.  Moreover, if $x \in F_{n}\backslash F_{n-1}$, then since $\bigcup_{m=1}^{n} F_m$ is finite, $x$ has a neighbourhood disjoint from $\brac{\bigcup_{m=1}^n F_m} \backslash \set{x}$ also.

\end{example}

\begin{remark}
If $A$ and $\phi_A$ are as in Example~\ref{e:remote1}, then regarding $\phi_A$ as a map into $\mathbb{R}$, $(A , \mathbb{R} , \mu_A)$ is a $C_0(\mathbb{R})$-algebra, and $\beta \mathbb{R}$ is a compactification of $X_A$.  For each $b \in \Ab$ and $\eps >0$, we have
\[
\set{ x \in \mathbb{R} : \norm{b(x)} \geq \eps } = \set{ q \in \mathbb{Q} : \norm{b(q)} \geq \eps }
\]
and hence this set is discrete.  Since there exist remote points of $\beta \mathbb{R}$, the $C( \beta \mathbb{R})$-algebra $(\Ab , \beta \mathbb{R} , \mu_{\Ab} )$ has fibres $\Ab_y = \set{ 0 }$ at these points.

\end{remark}

\section{Maximality and uniqueness of $\Abx$}
\label{s:univ}
For a completely regular space $X$, $\bX$ may be described as the unique compactification of $X$ satisfying the following maximality condition: given any compactification $K$ of $X$, the inclusion $\iota :X \rightarrow K$ has a unique extension to a continuous surjection $\overline{\iota} : \bX \rightarrow K$~\cite[Theorem 6.12]{gill_jer}.  Equivalently, given any such $K$, there is a unital, injective $\ast$-homomorphism $C(K) \to C( \bX)$. In this section we study the problem of generalising  the latter property to the $C( \bXA)$-algebra $\Abx$ associated with a $C_0(X)$-algebra $\A$.

Indeed, suppose that $(\Psi, \psi ) : \A \rightarrow \B$ is a compactification of the $C_0(X)$-algebra $\A$.   Theorem~\ref{t:u1} below shows that the $C( K )$-algebra $\AbK$ satisfies a similar maximality condition, namely, that there is an injective morphism $\B \to \AbK$ which is the identity on $\A$. 

If in addition $\B$ is a continuous $C(K)$-algebra (note that a necessary condition for this to occur is that $\A$ be a continuous $C_0(X)$-algebra), then we get an injective morphism $\B \to \Abx$.  This can fail without continuity of $\B$, as we shall see in Example~\ref{e:ninj}.

\begin{theorem}
\label{t:u1}
Let $(B,K,\mu_B)$ be a compactification of $(A,X,\mu_A)$.  
\begin{enumerate}
\item[(i)] There is an injective $\ast$-homomorphism $B \rightarrow \Ab$ extending the identity on $A$.
\item[(ii)] Let $\AbK$ denote the compactification of $\A$ given in Theorem~\ref{t:ab}(ii).  Then $B \to \Ab$ is $C(K)$-linear and hence gives rise to an injective morphism $ \B \to \AbK$.
\item[(iii)] The pair of $\ast$-homomorphisms $( B \to \Ab , C(K) \to C( \beta X_A ))$ define a morphism $\B \rightarrow (\Ab , \beta X_A, \mu_{\Ab} )$.  If in addition $X_A=X$ and $\B$ is a continuous $C(K)$-algebra, then $(B \to \Ab, C(K) \to C ( \bX ) )$ is an injective morphism.
\end{enumerate}
\end{theorem}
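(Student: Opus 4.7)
To establish part (i), I would first invoke Proposition~\ref{p:c0}(i) to restrict any $b \in B$ to a bounded continuous section $\restr{b}{X_A} \in \gb(A)$, and then compose with the $\ast$-isomorphism $\gb(A) \cong \Ab$ of Theorem~\ref{t:ext}(ii) to obtain a $\ast$-homomorphism $\Phi : B \to \Ab$. Injectivity is immediate from Proposition~\ref{p:c0}(ii). That $\Phi$ extends the identity on $A$ follows from the identifications $A_x \cong B_x$ in Definition~\ref{d:cfn}(iii): these identify $\restr{\Psi(a)}{X_A}$ with the canonical section $x \mapsto a(x)$, which is exactly the image of $a \in A$ under the embedding $A \hookrightarrow \Ab$.

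For part (ii) the computation is pointwise on $X_A$: both sections $\restr{\mu_B(f)b}{X_A}$ and $\mu_{\Ab}^K(f) \cdot \restr{b}{X_A}$ take the value $f(x)b(x)$ at each $x \in X_A$, so $\Phi$ is $C(K)$-linear. The final assertion of Proposition~\ref{p:inj} then upgrades this injective $C(K)$-linear $\ast$-homomorphism to an injective morphism $\B \to \AbK$.

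For part (iii), the base component of the morphism is the injective unital $\ast$-homomorphism $\psi : C(K) \to C(\bXA)$ dual to the Stone-\v{C}ech extension $\psi^{\ast} : \bXA \to K$ of $\mathrm{id}_{X_A}$. The morphism identity $\Phi(\mu_B(f)b) = \mu_{\Ab}(\psi(f))\Phi(b)$ reduces to a pointwise check on $X_A$ --- on which $\psi(f)$ and $f$ agree --- and then lifts to $\Ab$ via Theorem~\ref{t:ext}(ii). For the final assertion, having assumed $\B$ continuous and $X_A = X$, I would prove that each induced fibre map $\Phi_y : B_{\psi^{\ast}(y)} \to \Ab_y$ is isometric (hence injective) by establishing
\[
\norm{\Phi(b)(y)} = \norm{b(\psi^{\ast}(y))}, \quad b \in B,\ y \in \bX.
\]
The starting point is the formula $\norm{\Phi(b)(y)} = \inf_{W'} \sup_{z \in W' \cap X} \norm{b(z)}$ supplied by Proposition~\ref{p:c0}(iii) applied to the compactification $(\Ab,\bX,\mu_{\Ab})$ of $\A$. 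Continuity of $z \mapsto \norm{b(z)}$ on $K$ delivers the upper bound by pulling a small neighbourhood of $x := \psi^{\ast}(y)$ back through $\psi^{\ast}$; for the lower bound I would pick a net $z_{\alpha} \to y$ in $\bX$ with $z_{\alpha} \in X$, use $\psi^{\ast}(z_{\alpha}) = z_{\alpha} \to x$ in $K$, and conclude via continuity that $\norm{b(z_{\alpha})} \to \norm{b(x)}$.

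The decisive obstacle is this lower bound: Proposition~\ref{p:c0}(iii) presents $\norm{\Phi(b)(y)}$ only as an upper-semicontinuous limit of norms along $X$, so without continuity of $\B$ there is no mechanism forcing an equality $\norm{\Phi(b)(y)} = \norm{b(x)}$, and the fibre maps $\Phi_y$ could easily acquire nontrivial kernels at corona points. Continuity of $\B$ is precisely what converts this one-sided limit into the two-sided limit $\norm{b(x)}$, turning each $\Phi_y$ into an isometry.
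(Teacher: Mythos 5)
Your proposal is correct and follows essentially the same route as the paper: part (i) via restriction to $X_A$ and the isomorphism $\gb(A) \cong \Ab$ of Theorem~\ref{t:ext}, part (ii) via $C(K)$-linearity and Proposition~\ref{p:inj}, and part (iii) via a net in $X$ converging to a corona point, playing continuity of the norm functions of $\B$ on $K$ against upper-semicontinuity on $\bX$. Your packaging of the last step as the isometry statement $\norm{\Phi(b)(y)} = \norm{b(\psi^{\ast}(y))}$ is a slightly sharper formulation than the paper's proof by contradiction, but the underlying mechanism is identical.
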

\begin{proof}
(i) Since $A$ is an essential ideal of $B$ we may regard $B$ as a \Cst -subalgebra of $M(A)$ containing $A$.  Hence it suffices to prove that $B \subseteq \Ab$.  But then since $\B$ is a compactification of $\A$, we may identify $B$ with a \Cst -subalgebra of cross-sections in $\gb (A)$ by Theorem~\ref{t:ext}(i).  But then $\Ab$ is isomorphic to $\gb (A)$ under the same map by Theorem~\ref{t:ext}(iii), so that $B \subseteq \Ab$ as required.

(ii) Let $a \in A$ and factorise $a$ as $a = \mu_A^K (f) a_0$ for some $f \in C(K)$.  Then since $\AbK$ and $\B$ are both compactifications of $\A$ we have
\begin{align*}
\mu_B(g)a = \mu_B (g) \brac{ \mu_A^K (f) a_0 } & = \mu_A^K (gf) (a_0) \\
& = \mu_{\Ab}^K(g) \brac{ \mu_A (f)(a_0) } \\
& = \mu_{\Ab}^K (g) a.
\end{align*}
It follows that $\mu_B(g)b=\mu_{\Ab}^K (g)b$ for all $g \in C(K)$ and $b \in B$, hence $(B \to \Ab, \mathrm{Id}_{C(K)})$ is a  morphism.
  
  Finally, $( B \hookrightarrow \Ab , \mathrm{id}_{C(K)} ) : \B \rightarrow \AbK$ is injective by Proposition~\ref{p:inj}. 

(iii) Since by part (ii), $(B \to \Ab , \mathrm{id}_{C (K)})$ is a morphism it is clear that $(B \to \Ab , C(K) \to C( \beta X_A))$ is again a morphism.

Suppose now that $X_A=X$ and that $\B$ is a continuous $C(K)$-algebra.  For clarity, we shall denote the given $\ast$-homomorphisms by $\Psi : B \to \Ab$ and $\psi : C(K) \to C( \bX)$.  Then the dual of $\psi$ is $\psi^* : \bX \rightarrow K$, the canonical continuous map extending the identity on $X$.

Since $\Abx$ and $\B$ are both compactifications of $A$, the induced $\ast$-homomorphisms $\Psi_x: B_x \rightarrow \Ab_x$ are in fact $\ast$-isomorphisms  for all $x \in X$.  Thus it remains to show that the $\ast$-homomorphisms $\Psi_y : B_{\psi^*(y)} \rightarrow \Ab_y$ are injective for all $y \in \bX \backslash X$.

  Indeed, given such a $y$, let $(x_{\alpha})$ be a net in $X$ converging to $y$ in $\bX$, so that, regarding $X$ as a subspace of $K$, the same net $(x_{\alpha})$ converges to $\psi^* (y)$.  Suppose for a contradiction that $\Psi_y$ were not injective. Then there would be some $b \in B$ with $\norm{b(\psi^*(y))}=1$ for which $\Psi_y \brac{b(\psi^{\ast}(y))} = \brac{\Psi(b)}(y)=0$.  Since $\B$ is a continuous $C(K)$-algebra, $t \mapsto \norm{b(t)}$ is lower-semicontinuous on $K$, so that for any $\eps >0$ there is an index $\alpha_0$ with $\norm{b(x_{\alpha})} > 1- \eps$ whenever $\alpha \geq \alpha_0$.
  
  On the other hand, since $y \mapsto \norm{\Psi (b)(y)}$ is upper-semicontinuous on $\bX$, there is some index $\alpha_1$ with $\norm{\Psi(b) (x_{\alpha})}< \eps$ for all $ \alpha \geq \alpha_1$.  But then $\norm{b(x)} = \norm{\Psi (b)(x)}$ for all $x \in X$. Choosing $\alpha \geq \max \{ \alpha_0,\alpha_1 \}$ yields a contradiction.

\end{proof}

The assumption that $\B$ is a continuous $C(K)$-algebra in Theorem~\ref{t:u1}(ii) cannot be dropped in general, as is easily seen from the commutative case:
\begin{example}
\label{e:ninj}
Let $\A$ be the $C_0 ( \Nat )$-algebra defined by $C_0 ( \Nat )$, and let $\nhat = \Nat \cup \{ \infty \}$  be the one point compactification of $\Nat$.  Let $B = C^b ( \Nat )$, and equip $B$ with the structure of a $C(\nhat)$-algebra with respect to the obvious action of $C( \nhat )$ on $C^b ( \Nat )$ by multiplication (equivalently, define the base map $\Pm(C^b ( \Nat )) \cong \beta \Nat \rightarrow \nhat$ to be the canonical surjection).  Then $(B,\nhat, \mu_B)$ is a discontinuous $C( \nhat )$-algebra, and is clearly a compactification of $\A$.

In this case $(\Ab, \bX, \mu_{\Ab} )$  is canonically isomorphic to $C(\beta \Nat )$.  The fibre algebras of $B$ are $B_n \cong \Cp$ for $n \in \Nat$ and $B_{\infty} \cong (\ell^{\infty}/c_0)$, while the fibre algebras of $(\Ab, \bX, \mu_{\Ab} )$ are isomorphic to $\Cp$ for all $y \in \beta \Nat$.  Hence $B_{\infty}$ does not embed into $\Ab_y$ for any $y \in \beta \Nat \backslash \Nat$.  In this case the homomorphisms $\Psi_y : B_{\infty} \rightarrow \Ab_y, y \in \beta \Nat \backslash \Nat$, coincide with the point-evaluations.

If $C = C( \nhat )$ is regarded as a bundle compactification of $\A$, then $C$ is a continuous $C(\nhat)$-algebra and is identified with the subalgebra of $C( \beta \Nat )$ consisting of those functions that are constant on $\beta \Nat \backslash \Nat$.  In this case the fibre maps send $C_{\infty} \cong \Cp \rightarrow \Ab_y \cong \Cp$ canonically for $y \in \beta \Nat \backslash \Nat$.

\end{example}

\begin{lemma}
\label{l:sigma}
Let $\A$ be a $\sigma$-unital, continuous $C_0(X)$-algebra and let $f \in C^b (X_A)$ be real-valued and non-negative. Then there is some $c \in \Ab$ with $\norm{c(x)}=f(x)$ for all $x \in X_A$.
\end{lemma}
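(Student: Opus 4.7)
The plan is to construct $c$ as the product of a fixed element of $\Ab$ whose norm function is identically $1$ on $X_A$, with (the structure map applied to) an extension of $f$.

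First I would apply Theorem~\ref{t:nonzero}(i): since $\A$ is a $\sigma$-unital, continuous $C_0(X)$-algebra, there exists $b \in \Ab$ such that $\norm{b(x)} = 1$ for every $x \in X_A$. This is the key input; without it the approach collapses, and indeed Theorem~\ref{t:nonzero}(i) is precisely the tool built earlier (via Proposition~\ref{p:mult}) to produce such a unit-norm section under the hypotheses we have.

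Next, since $f \in C^b (X_A)$, the universal property of $\beta X_A$ yields a unique continuous extension $\overline{f} \in C(\beta X_A)$. By Theorem~\ref{t:ab}(i) we have the structure $\ast$-homomorphism $\mu_{\Ab} : C(\beta X_A) \to ZM(\Ab)$, so we may set
\[
c = \mu_{\Ab}\bigl(\overline{f}\bigr) \cdot b \in \Ab.
\]
Fix $x \in X_A$. Regarding $x$ as a point of $\beta X_A$ and using the $C(\beta X_A)$-algebra structure of $\Abxa$, the quotient map $\Ab \to \Ab_x$ sends $c$ to $\overline{f}(x) \, b(x) = f(x) \, b(x)$. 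Because $\Psi_x : A_x \to \Ab_x$ is an isometric isomorphism (by Proposition~\ref{p:c0} applied to the compactification $\Abxa$), and because $f(x) \geq 0$, we conclude
\[
\norm{c(x)} = \abs{f(x)} \, \norm{b(x)} = f(x) \cdot 1 = f(x),
\]
as required.

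The whole argument is therefore short once Theorem~\ref{t:nonzero}(i) is in hand; the only real content is recognising that a section of constant norm $1$ can be rescaled pointwise by an arbitrary bounded continuous non-negative function via the central action of $C(\beta X_A)$ on $\Ab$. No serious obstacle arises, apart from making sure that the extension $\overline{f}$ and the action through $\mu_{\Ab}$ do indeed give the claimed pointwise norm on $X_A$, which is immediate from the definitions.
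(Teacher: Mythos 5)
Your proof is correct and follows essentially the same route as the paper: obtain $b \in \Ab$ with $\norm{b(x)}=1$ on $X_A$, extend $f$ to $\overline{f} \in C(\beta X_A)$, and set $c = \mu_{\Ab}(\overline{f})\,b$. In fact your citation of Theorem~\ref{t:nonzero}(i) for the unit-norm section is the right one; the paper's own proof mistakenly points to Theorem~\ref{t:notremote} for this step.
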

\begin{proof}
Let $b$ be the element of $\Ab$ with $\norm{b(x)}=1$ for all $x \in X_A$ obtained from Theorem~\ref{t:notremote}. Then $f$ extends to $\overline{f} \in C ( \bXA)$.  Setting $c = \mu_{\Ab}(\overline{f}) b$, we have $c \in \Ab$ and
\[
\norm{c (x)} = \norm{f(x) b(x)} = \abs{f(x)} \norm{b(x)} = f(x)
\]
for all $x \in X_A$.
\end{proof}

Theorem~\ref{t:unique} considers the question of whether $\Abx$ is unique amongst compactifications $\B$ of $A$ having the property that $\Gamma (B) = \gb (\Ab)$.  Note that by Theorem~\ref{t:ext}, we cannot expect this in general (as $X_A$ often admits more than one compactification).  

If in addition $\A$ is continuous, then it turns out that $\Abx$ is the unique continuous compactification of $\A$ with the required extension property.

\begin{theorem}
\label{t:unique}
Let $\A$ be a $C_0(X)$-algebra and $\B$ a compactification of $\A$. Suppose that $\B$ has the property that for every $a \in \gb(A)$ there is $b \in \Gamma (B)$ such that $\restr{b}{X}=a$. Then
\begin{enumerate}
\item[(i)] the injective morphism $\B \rightarrow \AbK$ of Theorem~\ref{t:u1} is an isomorphism.
\end{enumerate}
If in addition, $\A$ is continuous and $\sigma$-unital, then
\begin{enumerate}
\item[(ii)] $\B$ is continuous if and only if $K$ is canonically homeomorphic to $\beta X_A$.  Moreover, when this occurs, the injective morphism $\B \to \Abx$ is an isomorphism.
\end{enumerate}
\end{theorem}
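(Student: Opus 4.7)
The plan is to derive (i) essentially from the section-algebra identifications of Section~\ref{s:ext}, and then to deduce (ii) by combining (i) with Theorem~\ref{t:ab}(iii) for the easy direction and with Lemma~\ref{l:sigma} and the universal property of $\beta X_A$ for the harder direction.

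For (i), Theorem~\ref{t:u1}(ii) yields a $C(K)$-linear injective morphism $(\Psi, \mathrm{Id}_{C(K)}) : \B \to \AbK$, with $\Psi : B \to \Ab$ realised as the inclusion inside $M(A)$. By the final sentence of Proposition~\ref{p:inj}, this morphism is an isomorphism as soon as $\Psi$ itself is a $\ast$-isomorphism, so one needs only to verify surjectivity of $\Psi$. Under the identifications $B \cong \Gamma_0(\B)$ from Theorem~\ref{t:go}(ii) and $\Ab \cong \gb(A)$ from Theorem~\ref{t:ext}(ii), $\Psi$ becomes the restriction map $b \mapsto \restr{b}{X_A}$, which is injective by Theorem~\ref{t:ext}(i); the hypothesis that every $a \in \gb(A)$ has the form $\restr{b}{X_A}$ for some continuous section $b$ of $\B$ supplies the missing surjectivity.

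For the forward direction of (ii), assume that the canonical continuous surjection $\beta X_A \to K$ extending the identity on $X_A$ is a homeomorphism. Then (i) identifies $\B$ with $\AbK$, which via this homeomorphism coincides with $\Abx$. Since $\A$ continuous makes $\XA$ a continuous $C_0(X_A)$-algebra with $(X_A)_A = X_A$, applying Theorem~\ref{t:ab}(iii) to $\XA$ yields that $\Abx$ is a continuous $C(\beta X_A)$-algebra, and hence so is $\B$.

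For the reverse direction, suppose $\B$ is continuous. The strategy is to prove that $X_A$ is $C^{\ast}$-embedded in $K$; the universal property of $\beta X_A$ then forces the canonical surjection $\beta X_A \to K$ to be a homeomorphism. For a non-negative $f \in C^b(X_A)$, Lemma~\ref{l:sigma} supplies $c \in \Ab$ with $\norm{c(x)} = f(x)$ on $X_A$; via (i) this $c$ may be regarded as an element of $B$, whence continuity of $\B$ makes $y \mapsto \norm{c(y)}$ a continuous function on all of $K$ restricting to $f$ on $X_A$. Decomposing a general $g \in C^b(X_A)$ into non-negative continuous pieces (real and imaginary parts, each shifted by a constant and split into positive and negative parts) extends $g$ continuously to $K$, completing the $C^{\ast}$-embedding; the final isomorphism $\B \to \Abx$ then follows immediately from (i) applied with $K = \beta X_A$. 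The main technical obstacle is reconciling the $\Gamma(B)$ formulation of the hypothesis with the $\Gamma_0(\B) \cong B$ identification that the restriction-map argument requires; this is mediated by Theorem~\ref{t:ext}(ii), which pins down $\Ab$ as exactly those elements of $M(A)$ whose restrictions to $X_A$ lie in $\gb(A)$.
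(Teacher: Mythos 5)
Your proposal is correct and follows essentially the same route as the paper's proof: part (i) via the identification of the inclusion $B \to \Ab$ with the restriction map $\Gamma_0(\B) \to \gb(A)$, whose surjectivity is exactly the extension hypothesis, and part (ii) via Theorem~\ref{t:ab}(iii) in one direction and, in the other, Lemma~\ref{l:sigma} together with continuity of the norm functions on $K$ to show $X_A$ is $C^{\ast}$-embedded in $K$. The only cosmetic difference is your slightly more elaborate decomposition of a general $g \in C^b(X_A)$ into non-negative pieces, where the paper simply uses $g = g_+ - g_-$.
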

\begin{proof}
(i): We first show that the injective $\ast$-homomorphism $B \to \Ab$ is surjective. If any $a \in \gb (A)$ extends uniquely to $b \in \Gamma(B)$, it then follows that the injective $\ast$-homomorphism $B \rightarrow \gb (A)$  sending $ b \mapsto \restr{b}{X_A}$ of Theorem~\ref{t:ext}  is a $\ast$-isomorphism.  In the commutative diagram
\[
\xymatrix{
B \ar[r] \ar[d] & \Ab \ar[d] \\
\Gamma (B) \ar_{b \mapsto \restr{b}{X_A}}[r] & \gb (A), \\
}
\]
the vertical arrows are isomorphisms by Theorems~\ref{t:go} and~\ref{t:ext}(ii) respectively. Since $b \mapsto \restr{b}{X}$ is a $\ast$-isomorphism, it must be the case that   $B \rightarrow \Ab$ is also an isomorphism.

Since the homomorphism $C(K) \to C(K)$ is the identity, it follows that $\mu_B = \mu_{\Ab}^K$ (identifying $B$ with $\Ab$ and hence $ZM(B)$ with $ZM(\Ab)$).  In particular, $\B$ may be identified with  $\AbK$.

(ii): Suppose first that $\B$ is continuous.

If $f \in C^b (X_A)_+$ then by Lemma~\ref{l:sigma}(iii), there is $c \in \gb (A)$ with $f(x) = \norm{c(x)}$ on $X_A$.  By assumption, the extension property of $\B$ would then imply that there is an element $b \in B$ with $b(x)  = c(x)$ for all $x \in X_A$.  Then since $\B$ is a continuous $C(K)$-algebra, $y \mapsto \norm{ b(y) }$ is a continuous function on $K$ extending $f$.  Setting $\overline{f}(y)=\norm{b(y)}$ for $y \in K$, it follows that every $f \in C^b (X_A)_+$ has an extension to a continuous function $\overline{f}$ in $C(K)$.

For general $g \in C^b (X_A)$, write $g=g_+-g_-$ where $g_+$ and $g_-$ are the positive and negative parts of $g$ respectively. Then $\overline{g}=\overline{g_+}-\overline{g_-}$ gives the required extension.  Thus $X_A$ is \Cst -embedded in $K$ and hence $K$ is canonically homeomorphic to $\beta X_A$.

Conversely, suppose that  $K $ is canonically homeomorphic to $\beta X_A$.  By Theorem~\ref{t:ab}(iii), $\B$ is continuous.

The final assertion then follows from Proposition~\ref{p:inj}.
\end{proof}

Note that one choice of compactification of $X_A$ is given by taking $K= \mathrm{cl}_{\bX} (X_A)$.

\begin{corollary}
\label{c:cts}
Let $\A$ be a $\sigma$-unital, continuous $C_0(X)$-algebra.  If $K = \mathrm{cl}_{\bX}(X_A)$ then $\AbK$ is a continuous $C(K)$-algebra if and only if $X_A$ is C$^{\ast}$-embedded in $X$.
\end{corollary}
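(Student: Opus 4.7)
The plan is to reduce the corollary to Theorem~\ref{t:unique}(ii) applied to $\B = \AbK$, and then translate the resulting topological condition ``$K$ is canonically homeomorphic to $\beta X_A$'' into the C$^{\ast}$-embedding statement using standard Stone-\v{C}ech topology.

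First I would verify that $\AbK$ satisfies the extension hypothesis of Theorem~\ref{t:unique}. Since $K$ is compact, the norm function of any continuous section of $\AbK$ is upper-semicontinuous on a compact space and hence bounded, so $\Gamma(\AbK) = \Gamma^b(\AbK)$. Theorem~\ref{t:ext}(iii) therefore shows that every $a \in \gb(A)$ has a unique extension $\overline{a} \in \Gamma(\AbK)$, giving the required property. Since $\A$ is assumed $\sigma$-unital and continuous, Theorem~\ref{t:unique}(ii) then yields the equivalence: $\AbK$ is a continuous $C(K)$-algebra if and only if $K$ is canonically homeomorphic to $\beta X_A$.

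It remains to show, for $K = \mathrm{cl}_{\bX}(X_A)$, that $K \cong \beta X_A$ canonically iff $X_A$ is C$^{\ast}$-embedded in $X$. By the universal property of $\beta X_A$, the inclusion $X_A \hookrightarrow \bX$ extends to a continuous map $\Phi : \beta X_A \to \bX$. Density of $X_A$ in $\beta X_A$ together with compactness forces $\Phi(\beta X_A) = \mathrm{cl}_{\bX}(X_A) = K$, so $\Phi$ is a continuous surjection $\beta X_A \to K$ fixing $X_A$ pointwise. The canonical homeomorphism $K \cong \beta X_A$ thus exists iff $\Phi$ is injective (equivalently a homeomorphism), and by \cite[Theorem 6.9]{gill_jer} this is in turn equivalent to $X_A$ being C$^{\ast}$-embedded in $K$. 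To finish, I would observe that being C$^{\ast}$-embedded in $K$, in $X$, or in $\bX$ are mutually equivalent for $X_A$: the locally compact space $X$ is C$^{\ast}$-embedded in $\bX$ by the Stone-\v{C}ech property, while $K$ is closed in the normal space $\bX$ and hence C$^{\ast}$-embedded there by Tietze. Any bounded continuous function on $X_A$ extending to one of $X$, $K$, or $\bX$ therefore extends to all of them via restriction/composition with the intermediate extensions, completing the chain of equivalences.

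The only (minor) obstacle is bookkeeping the C$^{\ast}$-embedding relations between $X_A$, $X$, $K$ and $\bX$; once this is clarified, the proof is essentially an immediate consequence of the uniqueness result Theorem~\ref{t:unique}(ii) together with the standard fact that the closure of a subspace in $\bX$ is its Stone-\v{C}ech compactification precisely when the subspace is C$^{\ast}$-embedded.
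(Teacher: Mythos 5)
Your proposal is correct and follows essentially the same route as the paper: reduce to Theorem~\ref{t:unique}(ii) (whose extension hypothesis for $\AbK$ is supplied by Theorem~\ref{t:ext}(iii)) and then identify ``$K$ canonically homeomorphic to $\beta X_A$'' with C$^{\ast}$-embedding of $X_A$ via standard Stone-\v{C}ech theory. You are in fact slightly more careful than the paper in spelling out the chain of equivalences between C$^{\ast}$-embedding of $X_A$ in $K$, in $X$, and in $\bX$, which the paper compresses into one line.
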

\begin{proof}
If $X_A$ is C$^{\ast}$-embedded in $X$ then $K$ is canonically homeomorphic to $\beta X_A$.  It then follows that $\AbK$ is canonically isomorphic to $\Abx$ and hence is continuous by Theorem~\ref{t:ab}(iii).  Conversely, suppose that $\AbK$ is a continuous $C( K)$-algebra. Then by Theorem~\ref{t:unique}(ii), $K$ is canonically homeomorphic to $\bXA$, and so $X_A$ is  \Cst -embedded in $\beta X$ and moreover, \Cst -embedded in $X$.
\end{proof}

Since for any compact space $K$ it is clear that $\beta K = K$, the operation of constructing $\bX$ from $X$ is a `closure' operation in the sense that $\beta ( \bX) = \bX$.  It is natural to ask whether or not constructing $\AbK$ from $\A$ also has this property.  Indeed, applying Theorem~\ref{t:ab} to $\AbK$  gives rise to a compactification $(\Abb, K , \mu_{\Abb}^K )$ of $\AbK$,   where $A$ is $C(K)$-linearly embedded into $\Abb$ as an essential ideal, hence $(\Abb, K , \mu_{\Abb}^K )$ is also a compactification of $\A$.

Certainly we have $\Ab = A$ whenever $X_A$ is compact by Proposition~\ref{p:abiff}.  However, this alone is not sufficient to show that $\Abb = \Ab$, since as we have seen in Section~\ref{s:fibres}, the set of nonzero fibres of $\AbK$ need not be compact.  Nonetheless, the maximality of $\AbK$ obtained in Theorem~\ref{t:unique} ensures that we do indeed have $\Abb = \Ab$ , without any additional assumptions on $\A$, as Corollary~\ref{c:closure} shows.

\begin{corollary}
\label{c:closure}
Let $\A$ be a $C_0(X)$-algebra and consider the compactification $\AbK$ of $\A$, where $K$ is any compactification of $X_A$.  Then the $C(K)$-linear inclusion of $\Ab$ into $\Abb$ is an isomorphism $\AbK \to (\Abb, K , \mu_{\Abb}^K )$.
\end{corollary}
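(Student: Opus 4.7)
The strategy is to regard $(\Abb, K, \mu_{\Abb}^K)$ as a compactification of $\A$ and then invoke the maximality result of Theorem~\ref{t:u1}.

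I would first check that $(\Abb, K, \mu_{\Abb}^K)$ is indeed a compactification of $\A$ in the sense of Definition~\ref{d:cfn}. Theorem~\ref{t:ab}(ii) applied to $\AbK$ makes it a compactification of $\AbK$, and composing the $C(K)$-linear essential-ideal embeddings $A \hookrightarrow \Ab \hookrightarrow \Abb$ exhibits $A$ as a $C(K)$-linear essential ideal of $\Abb$ (essential ideals of essential ideals are essential). At $x \in X_A \subseteq K_{\Ab}$ the fibre isomorphisms compose to give $A_x \cong \Ab_x \cong \Abb_x$, as required.

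Applying Theorem~\ref{t:u1}(ii) to this compactification then yields an injective $C(K)$-linear $*$-homomorphism $\rho : \Abb \to \Ab$ extending $\mathrm{id}_A$. On the other hand, the construction of $\Abb$ from $\AbK$ supplies the canonical $C(K)$-linear inclusion $\iota : \Ab \hookrightarrow \Abb$, which also extends $\mathrm{id}_A$.

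To finish, I would argue that $\iota$ and $\rho$ are mutually inverse by a standard essential-ideal uniqueness argument. Any $*$-homomorphism $\phi : \Ab \to M(A)$ extending $\mathrm{id}_A$ satisfies $\phi(b) a = \phi(ba) = ba$ for $b \in \Ab$, $a \in A$, and thus $\phi(b) = b$ by essentiality of $A$ in $M(A)$; this forces $\rho \circ \iota = \mathrm{id}_{\Ab}$. The identical argument, performed inside $M(A) = M(\Ab)$ with $\Ab$ essential in $M(\Ab)$, yields $\iota \circ \rho = \mathrm{id}_{\Abb}$. Hence $\iota$ is the required isomorphism of $C(K)$-algebras. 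The only genuinely delicate step is confirming Definition~\ref{d:cfn} for $(\Abb, K, \mu_{\Abb}^K)$, and even this is essentially formal once one notes that essentiality is transitive and fibre identifications compose.
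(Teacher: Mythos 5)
Your proof is correct, and its overall shape --- realise $(\Abb , K , \mu_{\Abb}^K)$ as a compactification of $\A$ and then play the maximality of $\Ab$ against the inclusion $\iota : \Ab \hookrightarrow \Abb$ --- is the same as the paper's; the two arguments part company only at the closing step. The paper verifies that $\Abb$ has the section-extension property via the identifications $\gb (A) \cong \gb (\Ab) \cong \gb (\Abb)$ supplied by Theorem~\ref{t:ext}, and then quotes the uniqueness statement, Theorem~\ref{t:unique}(i), to conclude that $\iota$ is surjective. You instead use only the maximality statement, Theorem~\ref{t:u1}, to produce the injective $C(K)$-linear $\rho : \Abb \to \Ab$ extending $\mathrm{id}_A$, and close the loop with the rigidity of $\ast$-homomorphisms into $M(A)$ that fix the essential ideal $A$. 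This is legitimate, and in fact can be compressed further: since $A$ is a closed two-sided ideal of $\Abb$ (an ideal of an ideal is an ideal for \Cst -algebras) and is essential in $M(A) = M(\Abb)$, the relation $\rho(c)a = \rho(ca) = ca$ for $c \in \Abb$, $a \in A$ already forces $\rho$ to be the identity on all of $\Abb$, whence $\Abb = \rho(\Abb) \subseteq \Ab$ in one stroke; your two-step version is also fine, though note that the second application (to $\iota \circ \rho$) needs the observation that $\iota \circ \rho$ restricts to the identity on $\Ab$, which you obtain from the first step $\rho \circ \iota = \mathrm{id}_{\Ab}$. What your route buys is a more self-contained ending --- no appeal to Theorem~\ref{t:unique} or to the section-algebra isomorphisms --- at the cost of having to spell out the preliminary claim that $(\Abb , K , \mu_{\Abb}^K)$ satisfies Definition~\ref{d:cfn} as a compactification of $\A$ (transitivity of essentiality, composition of the fibre isomorphisms $A_x \to \Ab_x \to \Abb_x$ for $x \in X_A \subseteq K_{\Ab}$), which the paper dispatches with the phrase ``by construction'' and which you have correctly identified as the only genuinely delicate point.
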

\begin{proof}
By construction, the embedding of $\A$ into $(\Abb, K , \mu_{\Abb}^K )$  is a compactification of $\A$, and we have natural isomorphisms $\gb (A) \equiv \gb (\Ab ) \equiv \gb (\Ab)$ by Theorem~\ref{t:ext}.  By Theorem~\ref{t:unique}, it must be the case that $\AbK \to (\Abb, K , \mu_{\Abb}^K )$, i.e. the inclusion of $\Ab$ into $\Abb$ is surjective.
\end{proof}

\begin{remark}
Note that Proposition~\ref{p:abiff} can not be used here to conclude that $\brac{\bXA}_{\Ab}$ is a compactification of $X_A$, since $\Ab$ will not in general be $\sigma$-unital.  Indeed, let $\A$ be the $C(X)$-algebra of Example~\ref{e:remote1}.  Then since $X_A \subseteq X_{\Ab} \subsetneq X$ and $X_A$ is dense in $X$, it follows that $X_{\Ab}$ cannot be compact.  Nonetheless, Corollary~\ref{c:closure} shows that $\Abb = \Ab$.  In particular, the equivalence of~\cite[Theorem 3.3]{arch_som_inner} may fail when the $C_0(X)$-algebra under consideration is not $\sigma$-unital.

\end{remark}

\end{document}